\definecolor{bluecite}{HTML}{0875b7}
\newcommand{\ds}{\displaystyle}
\newtheorem{proposition}{Proposition}[section]
\newtheorem{theorem}{Theorem}[section]
\newtheorem{remark}{Remark}[section]
\numberwithin{equation}{section}
\address{\textsc{Alexandru Krist\'aly}:  Department of Economics, Babe\c s-Bolyai University, str. Teodor Mihali 58-60, 400591, Cluj-Napoca, Romania \& Institute of Applied Mathematics, \'Obuda
	University, B\'ecsi \'ut 96, 1034,
	Budapest, Hungary.} 
\email{alexandru.kristaly@ubbcluj.ro; kristaly.alexandru@uni-obuda.hu}
\thanks{Research supported by the
	Excellence Researcher Program \'OE-KP-2-2022 of \'Obuda University, Hungary.}
\subjclass[]{ 
	28A25, 
	26D15,
	46E35, 
	53C23,  53C60, 
	58J60. 
}
\keywords{Sobolev inequality, sharpness,  volume growth, ${\sf CD}(0,N)$ spaces, Riemannian manifolds.}
\title[Volume growths vs. Sobolev inequalities]{Volume growths versus Sobolev inequalities}
\author[Alexandru Krist\'aly]{Alexandru Krist\'aly}
\begin{document}
	\begin{abstract} The paper deals with fine volume growth estimates on metric measures spaces supporting various Sobolev-type inequalities. Given a generic metric measure space, we first prove a quantitative volume growth of metric balls under the validity of a  Sobolev-type  inequality (including Gagliardo--Nirenberg, Sobolev and Nash inequalities, as well as their borderlines, i.e., the  logarithmic-Sobolev, Faber--Krahn, Morrey    and Moser--Trudinger inequalities, respectively),  answering partially a question of Ledoux [\textit{Ann.\ Fac.\ Sci.\ Toulouse Math.}, 2000] in a broader setting. 
		We then prove sharp  Gagliardo--Nirenberg--Sobolev interpolation inequalities -- with their borderlines -- in the setting of metric measure spaces verifying the curvature-dimension condition ${\sf CD}(0,N)$ in the sense of Lott--Sturm--Villani. In addition, the equality cases are also characterized in terms of the $N$-volume cone structure of the ${\sf CD}(0,N)$ space together with the  precise profile of extremizers. 

	\end{abstract}
	\maketitle 
	
		\vspace{-0.7cm}
	
	\section{Introduction and Main results}

Establishing sharp Sobolev-type inequalities on curved spaces  became a central topic of investigations in geometric analysis, initiated by Aubin \cite{Aubin};  these spaces include Riemannian and Finsler manifolds, or even metric measure spaces with certain synthetic curvature restriction.
One of the most important classes of such structures are non-negative Ricci curvature spaces (both classical and synthetic), which appear at the interface of geometry, topology and physics; see e.g.\ Cavalletti and Mondino \cite{CM}, Cheeger and  Gromoll \cite{Cheeger-Gromoll},   Li \cite{Li-Annals}, Lott and Villani \cite{LV}, Ni \cite{Ni-JGA}, Perelman \cite{Perelman},  
Sturm \cite{Sturm-2}. A specific feature of these geometric objects is that they do support sharp Sobolev-type inequalities only in a rigid setting, pointed out first by Ledoux \cite{Ledoux-2}.  

To be more precise, let $(M,g)$ be a complete $n$-dimensional  Riemannian manifold  with non-negative Ricci curvature with its natural distance function ${\sf d}_g$ and canonical measure ${\rm d}v_g$, and assume that $(M,g)$ supports  the \textit{Sobolev  inequality} for some ${\sf C}>0$, i.e.,  
\begin{equation}\label{Sobolev-1}
	\|u\|_{L^q(M,{\rm d}v_g)}\leq {\sf C} \|\nabla_g u\|_{L^p(M,{\rm d}v_g)} ,\ \ u\in C_0^\infty(M),
\end{equation}
where $p\in (1,n)$ and $q=np/(n-p)$.
  Then,  it turns out that the \textit{asymptotic volume ratio} verifies 
\begin{equation}\label{AVR-definicio}
	{\sf AVR}(M):=\lim_{r\to \infty}\frac{V_g(B_x(r))}{\omega_nr^n}\geq \left(\frac{{\sf K}_{\rm opt}}{{\sf C}}\right)^{n},
\end{equation}
see Ledoux \cite{Ledoux-2} and do Carmo and Xia \cite{doCarmo-Xia}, where $V_g(B_x(r))$ stands for the volume of the geodesic ball $B_x(r)=\{y\in M:{\sf d}_g(x,y)<r\}$ in $M$, $\omega_n$ is the volume of the unit ball in $\mathbb R^n$, and ${\sf K}_{\rm opt}$ is the optimal Aubin--Talenti constant in the Euclidean counterpart of \eqref{Sobolev-1}, see Talenti \cite{Talenti-0}. Note that by the Bishop--Gromov volume	comparison principle,  ${\sf AVR}(M)$ is well-defined (does not depend on $x\in M$) and ${\sf AVR}(M)\leq 1$; in particular, if ${\sf C}={\sf K}_{\rm opt}$ in \eqref{Sobolev-1}, then  ${\sf AVR}(M)= 1$, which implies  that $(M,g)$ is isometric to the Euclidean space $\mathbb R^n$, see Ledoux \cite{Ledoux-2}.
The proof of the above result  strongly relies on the Barenblatt  profile of the Talentian function in the Euclidean version of \eqref{Sobolev-1}, i.e.,  $u_{\sf T}(x)=(\lambda+|x|^\frac{p}{p-1})^{{(p-n)/}{p}}$, $x\in \mathbb R^n,$ $\lambda>0$. 
In fact, a careful comparison of certain ODEs and ODIs -- coming from the explicit Barenblatt profile -- combined with the Bishop--Gromov volume comparison on spaces with non-negative Ricci curvature provides  the volume growth estimate \eqref{AVR-definicio} and the subsequent rigidity result.  

At the end of his seminal paper, Ledoux \cite[p.\ 362]{Ledoux-1} conjectured that the validity of \eqref{Sobolev-1} with ${\sf C}={\sf K}_{\rm opt}$(=optimal Aubin--Talenti constant) in a Riemannian manifold $(M,g)$  without \textit{any} curvature assumption, still implies the  volume growth  
$V_g(B_x(r))\geq \omega_n r^n$ for every $x\in M$ and $r>0$. Note that in the limit case $p=1$ the conjecture holds, when \eqref{Sobolev-1} reduces to the isoperimetric inequality. Ledoux's conjecture has been 'asymptotically' solved by Carron \cite{Carron}, stating that if the limit $L:=\lim_{r\to \infty}\frac{V_g(B_{x_0}(r))}{\omega_nr^n}$ exists form some $x_0\in M$, the validity of \eqref{Sobolev-1} implies  $L\geq \left(\frac{{\sf K}_{\rm opt}}{{\sf C}}\right)^{n};$ this proof  explores again the explicit form of the Barenblatt  profile of the Talentian bubble $u_{\sf T}$. 

Motivated by the works of Ledoux \cite{Ledoux-1} and Carron \cite{Carron}, the first purpose of our paper is to prove that a general class of Sobolev-type inequalities  implies a quantitative volume growth of metric balls on not necessarily smooth metric measure spaces with no curvature restriction, even in the case when \textit{no} Barenblatt profile occurs in the model/comparison setting, associated with the initial Sobolev inequality. Such situations appear for instance in the case of the Nash inequality, or the borderline cases of the Gagliardo--Nirenberg--Sobolev inequality, as the Moser--Trudinger and $L^p$-Faber--Krahn-type inequalities, respectively.

In order to fix the ideas, let  $N>1$ be a real number, and consider the $1$-dimensional model metric measure cone  $(\mathbb R_+,|\cdot|,{\sf m}_{N}),$ where $\mathbb R_+=[0,\infty) $, $|\cdot|$ is the usual distance on $\mathbb R_+$ and $ {\sf m}_{N}=N\omega_N r^{N-1}\mathcal L^1$ is a weighted measure on $\mathbb R_+$, with $\omega_N=\pi^{N/2}/\Gamma(N/2+1).$ 
For some parameters $q,r>0$, $p>1$  and $\theta\in (0,1]$, verifying the property
\begin{equation}\label{balance-condition}
	\frac{1}{q}=\theta\left(\frac{1}{p}-\frac{1}{N}\right)+\frac{1-\theta}{r},
\end{equation}
 we assume  the validity of a sharp Gagliardo--Nirenberg--Sobolev  inequality on $(\mathbb R_+,|\cdot|,{\sf m}_{N})$,  i.e.,  
\begin{equation}\label{GN-1-dimensional-model}
	\|u\|_{L^q(\mathbb R_+,{\sf m}_{N})}\leq {\sf K}_{\rm opt} \|u'\|_{L^p(\mathbb R_+,{\sf m}_{N})}^\theta \|u\|_{L^r(\mathbb R_+,{\sf m}_{N})}^{1-\theta},\ \ \forall u\in W_{\rm loc}^{1,p}(\mathbb R_+,{\sf m}_N)\cap L^q(\mathbb R_+,{\sf m}_N),
\end{equation}
and the existence of an extremal $C^1$-function $u_0\geq 0$ in \eqref{GN-1-dimensional-model} having the support supp$(u_0)=[0,R_0]$ for some $R_0>0$ (with  supp$(u_0)=\mathbb R_+$ if $R_0=\infty$),  together with the  properties  
\begin{equation}\label{extremal-asymptotics}
\left\{ 
\begin{array}{lll}
	|u_0'|^p\in BV_{\rm loc}([0,R_0)); \\ 
	\exists i_0>0\ {\rm such\ that}\ u_0''(\rho)\geq 0\ {\rm for\ a.e.}\ \rho>i_0, {\rm whenever}\ R_0=\infty; \\
	u_0^q(\rho)\rho^N\to 0, |u_0'|^p(\rho)\rho^N\to 0\ {\rm and} \ u_0^r(\rho)\rho^N \to 0\  {\rm as}\  \rho\to \infty, {\rm whenever}\ R_0=\infty.
%
\end{array}\right.
\end{equation}
The seemingly involved assumptions in \eqref{extremal-asymptotics} serve as  integrability conditions for the extremizer $u_0$ of \eqref{GN-1-dimensional-model}, whose validity is well-known for various range of parameters; for details, see  \S \ref{section-5}. Inequality \eqref{GN-1-dimensional-model} requires the (scale-invariant) balance condition  \eqref{balance-condition}, and it corresponds to the 'radial' version of \eqref{Sobolev-1} in $\mathbb R^N$ when $N=n\in \mathbb N$ and $\theta=1$. Clearly, the last assumption in \eqref{extremal-asymptotics} involving the parameter $r>0$ is not considered usually whenever $\theta=1$.

Let  $(X,{\sf d},{\sf m})$ be a  metric measure space,  $B_x(r)=\{y\in X:{\sf d}(x,y)<r\}$ be the metric ball with origin  $x\in X$ and radius $r>0$.  
Our first result roughly says that if a sharp Sobolev-type inequality holds on the 1-dimensional model space  $(\mathbb R_+,|\cdot|,{\sf m}_{N})$ having  a non-zero extremal function and   $(X,{\sf d},{\sf m})$  supports the same type of Sobolev inequality, then there is a precise volume growth control of the metric balls of $X$ 'at infinity'. To state it,   $\|\nabla u\|_{L^p(X,{\sf m})}$ stands for the $p$-Cheeger energy of  a suitable function $u:X\to \mathbb R,$ while $W_{\rm loc}^{1,p}(X,{\sf m})$ is a local Sobolev space; for details, see \S \ref{section-Cheeger}.  More precisely, we have: 
	
	\begin{theorem}\label{main-theorem-unified} {\rm (Unified volume growth)} Let $q,r>0$,  $p,N>1$ and $\theta\in (0,1]$ be parameters  verifying \eqref{balance-condition} and assume the following statements hold$:$ 
		\begin{enumerate}
			\item[(i)] The Gagliardo--Nirenberg--Sobolev  inequality \eqref{GN-1-dimensional-model} holds on $(\mathbb R_+,|\cdot|,{\sf m}_{N})$ with the optimal constant ${\sf K}_{\rm opt}$,  having  a non-zero and non-increasing extremal $C^1$-function $u_0\geq 0$ on  {\rm supp}$(u_0)=[0,R_0]$ $($with  {\rm supp}$(u_0)=\mathbb R_+$ whenever $R_0=\infty$$)$, verifying \eqref{extremal-asymptotics}$;$ 
			\item[(ii)] $(X,{\sf d},{\sf m})$ is a metric measure space 
			supporting the Gagliardo--Nirenberg--Sobolev  inequality  for some ${\sf C}>0$, i.e., 
			\begin{equation}\label{GN-metric-space}
				\|u\|_{L^q(X,{\sf m})}\leq {\sf C} \|\nabla u\|_{L^p(X,{\sf m})}^\theta \|u\|_{L^r(X,{\sf m})}^{1-\theta},\ \ \forall u\in  W_{\rm loc}^{1,p}(X,{\sf m})\cap L^q(X,{\sf m}); 
			\end{equation}
		\item[(iii)] For some $x_0\in X$ the following limit exists$:$ 
		\begin{equation}\label{Limit-exists-N}
				L_N(x_0)=\lim_{r\to \infty}\frac{	{\sf m}(B_{x_0}(r))}{\omega_N r^{N}}.
		\end{equation}
		\end{enumerate}
	 Then 
		 \begin{equation}\label{volume-growth-estimate}
		 	{\sf AVR}(X):= L_N(x_0)\geq \left(\frac{{\sf K}_{\rm opt}}{\sf C}\right)^\frac{N}{\theta}.
		 \end{equation}
	\end{theorem}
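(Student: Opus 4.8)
The plan is to transplant the one-dimensional extremizer $u_0$ onto $X$ as a rescaled radial profile and to read off the asymptotic volume ratio by letting the scaling parameter tend to zero; this is the metric-measure analogue of the Ledoux--Carron scheme, with the explicit Barenblatt bubble replaced by the abstract extremizer $u_0$. Concretely, for $\lambda>0$ I would set $u_\lambda(y):=u_0\big(\lambda{\sf d}(x_0,y)\big)$, $y\in X$. Since $u_0$ is $C^1$, non-increasing, and either compactly supported ($R_0<\infty$) or subject to the decay in \eqref{extremal-asymptotics} ($R_0=\infty$), the functions $u_\lambda$ are admissible, i.e.\ $u_\lambda\in W^{1,p}_{\rm loc}(X,{\sf m})\cap L^q(X,{\sf m})$, so that the Gagliardo--Nirenberg--Sobolev inequality \eqref{GN-metric-space} on $X$ may be applied to each $u_\lambda$.

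The first computational step is to control the gradient in the favorable direction: because $y\mapsto{\sf d}(x_0,y)$ is $1$-Lipschitz its minimal weak upper gradient is $\le 1$ ${\sf m}$-a.e., and the chain rule for the Cheeger energy yields $|\nabla u_\lambda|\le \lambda\,|u_0'|\big(\lambda{\sf d}(x_0,\cdot)\big)$; this upper bound, inserted on the right-hand side of \eqref{GN-metric-space}, only enlarges it, hence is legitimate to use. Writing $V(\rho):={\sf m}(B_{x_0}(\rho))$, I would then rewrite each of the quantities $\|u_\lambda\|_{L^q(X,{\sf m})}^q$, $\|u_\lambda\|_{L^r(X,{\sf m})}^r$ and $\int_X|u_0'|^p\big(\lambda{\sf d}(x_0,\cdot)\big)\,{\rm d}{\sf m}$ as Lebesgue--Stieltjes integrals against ${\rm d}V$, integrate by parts, and substitute $s=\lambda\rho$. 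After multiplying by $\lambda^N$ this expresses every transplanted quantity through the rescaled density $V(s/\lambda)/(s/\lambda)^N$, which by hypothesis (iii) converges to $L_N(x_0)\,\omega_N$ as $\lambda\to0^+$. Hence, in the limit, $\lambda^N\|u_\lambda\|_{L^q(X,{\sf m})}^q\to L_N(x_0)\,\|u_0\|_{L^q(\mathbb R_+,{\sf m}_N)}^q$, and likewise for the $L^r$- and the gradient-terms, with the factor $L_N(x_0)$ times the corresponding model norm of $u_0$.

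With these asymptotics in hand I would plug $u_\lambda$ into \eqref{GN-metric-space}, multiply by $\lambda^{N/q}$, and use the balance condition \eqref{balance-condition}: a direct bookkeeping of exponents shows that all powers of $\lambda$ cancel, reflecting the scale-invariance of \eqref{GN-metric-space}. Passing to the limit $\lambda\to0^+$ then gives
\[
L_N(x_0)^{1/q}\|u_0\|_{L^q(\mathbb R_+,{\sf m}_N)}\le {\sf C}\,L_N(x_0)^{\theta/p+(1-\theta)/r}\,\|u_0'\|_{L^p(\mathbb R_+,{\sf m}_N)}^{\theta}\,\|u_0\|_{L^r(\mathbb R_+,{\sf m}_N)}^{1-\theta}.
\]
Now invoking that $u_0$ is an extremizer of \eqref{GN-1-dimensional-model}, i.e.\ the model inequality holds there with equality and optimal constant ${\sf K}_{\rm opt}$, the common factor $\|u_0'\|_{L^p(\mathbb R_+,{\sf m}_N)}^{\theta}\|u_0\|_{L^r(\mathbb R_+,{\sf m}_N)}^{1-\theta}$ (finite and non-zero since $u_0\not\equiv0$) cancels, leaving ${\sf K}_{\rm opt}\,L_N(x_0)^{1/q}\le {\sf C}\,L_N(x_0)^{\theta/p+(1-\theta)/r}$. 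Since \eqref{balance-condition} yields $\theta/p+(1-\theta)/r-1/q=\theta/N$, this is precisely ${\sf K}_{\rm opt}\le {\sf C}\,L_N(x_0)^{\theta/N}$, which rearranges to \eqref{volume-growth-estimate}.

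The main obstacle is the gradient term, where $|u_0'|^p$ need not be monotone, so its Stieltjes measure ${\rm d}\big(|u_0'|^p\big)$ is only signed. Here the hypotheses in \eqref{extremal-asymptotics} are exactly what is needed: the assumption $|u_0'|^p\in BV_{\rm loc}([0,R_0))$ legitimizes the integration by parts, the eventual convexity $u_0''\ge0$ on $(i_0,\infty)$ (when $R_0=\infty$) forces ${\rm d}\big(|u_0'|^p\big)\le0$ on the tail, so that the signed measure has a definite sign far out and the interchange of limit and integral can be carried out by a dominated/monotone convergence argument, while the decay relations $u_0^q(\rho)\rho^N,\,|u_0'|^p(\rho)\rho^N,\,u_0^r(\rho)\rho^N\to0$ annihilate all boundary terms in the integrations by parts and guarantee that the limiting integrals reproduce the finite model norms of $u_0$. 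A secondary technical point is the passage to the limit near the origin: exploiting that $V$ is non-decreasing one bounds $\lambda^N V(s/\lambda)\le \lambda^N V(\varepsilon/\lambda)$ on $(0,\varepsilon)$ and lets $\varepsilon\to0^+$, which tames any irregularity of $V(\rho)/\rho^N$ as $\rho\to0^+$ without affecting the limit.
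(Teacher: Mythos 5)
Your proposal is correct and is essentially the paper's own proof: with the substitution $\lambda=1/R$ your rescaled test functions $u_\lambda$ coincide with the paper's $u_R=u_0({\sf d}(x_0,\cdot)/R)$, your Stieltjes integration by parts against ${\rm d}V$ is exactly the paper's change-of-variables formula (Proposition \ref{proposition-co-area}), and the remaining steps -- the Lipschitz/chain-rule gradient bound via \eqref{Cheeger-0-ineq}, the dominated-convergence passage using \eqref{extremal-asymptotics} (BV for the integration by parts, eventual convexity to fix the sign of ${\rm d}(|u_0'|^p)$ on the tail, the decay to kill boundary terms), and the final cancellation via the balance condition $\theta/p+(1-\theta)/r-1/q=\theta/N$ together with extremality of $u_0$ -- match the paper's Steps 1--3 verbatim. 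No gaps.
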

We note that whenever the limit $L_N(x_0)$ in \eqref{Limit-exists-N} exists for some $x_0\in X$,  one can easily prove that  $L_N(x)=L_N(x_0)$ for \textit{every} $x\in X$; in this way, we may consider this common limit 
\begin{equation}\label{AVR-definicio-metric-space}
	{\sf AVR}(X):=L_N(x_0)=\lim_{r\to \infty}\frac{{\sf m}(B_{x_0}(r))}{\omega_Nr^N}
\end{equation}
to be the \textit{asymptotic volume ratio} on the metric measure space  $(X,{\sf d},{\sf m})$.

 The volume growth estimate \eqref{volume-growth-estimate} is comparable with \eqref{AVR-definicio}; note however that while \eqref{AVR-definicio} has been obtained under a curvature condition (i.e., the Ricci curvature is non-negative), the estimate \eqref{volume-growth-estimate} does not require \textit{any} curvature restriction. Thus, in the same spirit as in Carron \cite{Carron}, Theorem \ref{main-theorem-unified} answers 'asymptotically' the question posed by Ledoux \cite{Ledoux-1} on not necessarily smooth spaces for a \textit{broad} family of Sobolev inequalities. In fact, although Theorem \ref{main-theorem-unified} is stated for the Gagliardo--Nirenberg--Sobolev  inequality -- including e.g.\ Nash and Sobolev inequalities  -- our argument works also for its borderline cases, as the logarithmic-Sobolev, Faber--Krahn, Morrey and Moser--Trudinger inequalities; see \S \ref{section-5} \& \ref{section-borderline-0} for details.  
 We also point out that our argument does not even require an explicit shape of the extremal function in  the  Sobolev-type inequality  on the model space, which is useful e.g.\ for the $L^p$-Faber--Krahn and  Moser--Trudinger inequalities,  where the existence of  extremal functions  is known without their explicit shapes.

 The proof of Theorem \ref{main-theorem-unified}, together with the borderline cases of the Gagliardo--Nirenberg--Sobolev  inequality,  
  explores the existence of extremal functions in the model space $(\mathbb R_+,|\cdot|,{\sf m}_{N})$, combined with a  change of variables formula (see Proposition \ref{proposition-co-area})  and a careful blow-down limiting argument. Instead of \eqref{volume-growth-estimate}, one can provide  a more involved control on the volume of metric balls whenever the limit \eqref{Limit-exists-N} does not exist, see Remark \ref{remark-b}/(b). 
  


An efficient application of Theorem \ref{main-theorem-unified} is the proof of a whole class of sharp Sobolev-type inequalities in  metric measure spaces verifying the curvature-dimension condition ${\sf CD}(0,N)$ in the sense of Lott--Sturm--Villani, together with the characterization of the equality cases. Note that  ${\sf CD}(0,N)$ spaces include not only Riemannian and Finsler manifolds with non-negative Ricci curvature (having at most dimension $N$), but also their Gromov--Hausdorff limits with possible singularities,  Alexandrov spaces with non-negative
curvature, etc. 
We emphasize that on ${\sf CD}(0,N)$ spaces, due to the generalized Bishop--Gromov principle, see Sturm \cite{Sturm-2}, the limit $L_N(x)$ in \eqref{Limit-exists-N} exists for every $x\in X$; thus ${\sf AVR}(X)$ in \eqref{AVR-definicio-metric-space} is well-defined. 
Our second main result reads as follows:

	\begin{theorem}\label{main-theorem-CD}  {\rm (Sharp Gagliardo--Nirenberg--Sobolev inequalities on ${\sf CD}(0,N)$ spaces)} Let  $q,r>0$,  $p,N>1$  and $\theta\in (0,1]$ be  para\-meters verifying \eqref{balance-condition}, and let $(X,{\sf d},{\sf m})$ be an essentially non-branching  ${\sf CD}(0,N)$ metric measure space with ${\sf AVR}(X) >0.$ If the Gagliardo--Nirenberg--Sobolev interpolation inequality \eqref{GN-1-dimensional-model} holds on $(\mathbb R_+,|\cdot|,{\sf m}_{N})$ with the optimal constant ${\sf K}_{\rm opt}$ and having  a non-zero, non-increasing extremal 
		$C^1$-function $u_0\geq 0$ on  {\rm supp}$(u_0)=[0,R_0]$ $($with  {\rm supp}$(u_0)=\mathbb R_+$ whenever $R_0=\infty$$)$, verifying \eqref{extremal-asymptotics}$,$
		 then 
		one has that 
			\begin{equation}\label{GN-CD}
			\|u\|_{L^q(X,{\sf m})}\leq {\sf AVR}(X)^{-\frac{\theta}{N}} {\sf K}_{\rm opt}\|\nabla u\|_{L^p(X,{\sf m})}^\theta \|u\|_{L^r(X,{\sf m})}^{1-\theta},\ \ \forall u\in W_{\rm loc}^{1,p}(X,{\sf m})\cap L^q(X,{\sf m}), 
		\end{equation}
		and the constant  ${\sf AVR}(X)^{-\frac{\theta}{N}} {\sf K}_{\rm opt}$ is optimal. 
		
		In addition, if the extremal function $u_0$ in  \eqref{GN-1-dimensional-model}  is unique $($up to scaling and multiplicative factor$)$ and $u_0'\neq 0$ a.e.\ on {\rm supp}\,$(u_0)$, then the following two statements are equivalent$:$ 
		\begin{enumerate}
			\item[(i)] Equality holds in \eqref{GN-CD} for some non-zero function $u\in W_{\rm loc}^{1,p}(X,{\sf m})\cap L^q(X,{\sf m});$
				\item[(ii)] $X$ is an $N$-volume cone with a tip $x_0\in X$ and up to scaling and multiplicative factor,  $u(x)=u_0\left({\sf AVR}(X)^{\frac{1}{N}}{\sf d}(x_0,x)\right)$  for ${\sf m}$-a.e.\   $x\in B_{x_0}(R_0/{\sf AVR}(X)^{\frac{1}{N}})$.
		\end{enumerate}
\end{theorem}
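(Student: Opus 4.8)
The plan is to prove \eqref{GN-CD} by a symmetrization argument comparing $u$ with a radial profile on the model cone $(\mathbb R_+,|\cdot|,{\sf m}_N)$, to deduce optimality directly from Theorem~\ref{main-theorem-unified}, and finally to analyze the equality cases by tracking rigidity through each link of the symmetrization chain.

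\textbf{The inequality.} Since replacing $u$ by $|u|$ changes neither side of \eqref{GN-CD}, I would assume $u\ge 0$. Writing $\mu_u(t)={\sf m}(\{u>t\})$ for the distribution function, I would define the monotone rearrangement $u^*:\mathbb R_+\to\mathbb R_+$ by the equimeasurability requirement ${\sf m}_N(\{u^*>t\})=\mu_u(t)$, i.e.\ $u^*$ is the non-increasing function whose superlevel set $[0,\sigma(t))$ satisfies $\omega_N\sigma(t)^N=\mu_u(t)$. The layer-cake formula then gives $\|u^*\|_{L^s(\mathbb R_+,{\sf m}_N)}=\|u\|_{L^s(X,{\sf m})}$ for $s=q,r$. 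The crucial ingredient is a P\'olya--Szeg\H{o} inequality
\begin{equation*}
\|\nabla u\|_{L^p(X,{\sf m})}\ge {\sf AVR}(X)^{\frac1N}\,\|(u^*)'\|_{L^p(\mathbb R_+,{\sf m}_N)},
\end{equation*}
which I would obtain by combining the co-area formula of Proposition~\ref{proposition-co-area} with the H\"older estimate $\int_{\{u=t\}}|\nabla u|^{p-1}\,{\rm d}{\rm Per}\ge {\rm Per}(\{u>t\})^{p}/(-\mu_u'(t))^{p-1}$, and then inserting the sharp isoperimetric inequality ${\rm Per}(\Omega)\ge N\omega_N^{1/N}{\sf AVR}(X)^{1/N}{\sf m}(\Omega)^{(N-1)/N}$ valid on essentially non-branching ${\sf CD}(0,N)$ spaces; on the model side the same chain is an identity, since the superlevel sets are genuine balls and $|(u^*)'|$ is constant on each level. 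Applying \eqref{GN-1-dimensional-model} to $u^*$ and inserting the three comparisons produces \eqref{GN-CD}, the factor ${\sf AVR}(X)^{-\frac{\theta}{N}}$ arising as the $\theta$-th power of the isoperimetric correction ${\sf AVR}(X)^{-\frac1N}$.

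\textbf{Optimality.} This is immediate from Theorem~\ref{main-theorem-unified}: were \eqref{GN-CD} to hold with some ${\sf C}<{\sf AVR}(X)^{-\theta/N}{\sf K}_{\rm opt}$ in place of the stated constant, then $(X,{\sf d},{\sf m})$ would support \eqref{GN-metric-space} with that ${\sf C}$, forcing ${\sf AVR}(X)\ge({\sf K}_{\rm opt}/{\sf C})^{N/\theta}>{\sf AVR}(X)$, a contradiction.

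\textbf{The equality cases.} For (ii)$\Rightarrow$(i), on an $N$-volume cone the Bishop--Gromov ratio is constant, so for $u(x)=u_0\big({\sf AVR}(X)^{1/N}{\sf d}(x_0,x)\big)$ the isoperimetric and H\"older steps above are equalities and $u^*=u_0$ extremizes \eqref{GN-1-dimensional-model}, whence the whole chain is saturated. For the converse (i)$\Rightarrow$(ii), equality in \eqref{GN-CD} forces equality in each link: first $u^*$ must extremize \eqref{GN-1-dimensional-model}, hence $u^*=u_0$ up to scaling by the uniqueness hypothesis; second, the P\'olya--Szeg\H{o} step is saturated, which (using $u_0'\ne 0$ a.e.\ so that $t$ sweeps the whole support) makes the sharp isoperimetric inequality an equality on $\mu_u$-a.e.\ superlevel set, identifying these as metric balls $B_{x_0}(\rho(t))$ about a common tip $x_0$ with ${\sf m}(B_{x_0}(\rho))={\sf AVR}(X)\omega_N\rho^N$, i.e.\ $X$ is an $N$-volume cone by the volume-cone rigidity. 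The equimeasurability relation $\omega_N\sigma(t)^N={\sf AVR}(X)\omega_N\rho(t)^N$ then gives $\sigma(t)={\sf AVR}(X)^{1/N}\rho(t)$, so $u(x)=u_0\big({\sf AVR}(X)^{1/N}{\sf d}(x_0,x)\big)$ on $B_{x_0}(R_0/{\sf AVR}(X)^{1/N})$, as claimed. I expect the main obstacle to be exactly this last rigidity: upgrading equality in the isoperimetric inequality to the full $N$-volume cone structure of $X$, and simultaneously verifying that all superlevel sets share the \emph{single} center $x_0$.
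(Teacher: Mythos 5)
Your architecture coincides with the paper's: rearrange $u$ onto the model cone $(\mathbb R_+,|\cdot|,{\sf m}_N)$, use equimeasurability (Cavalieri) for the $L^q$ and $L^r$ norms, use the P\'olya--Szeg\H o inequality \eqref{Polya-Szego} for the gradient term (the factor ${\sf AVR}(X)^{-\theta/N}$ arises exactly as you say), and obtain sharpness by feeding a hypothetically better constant into Theorem \ref{main-theorem-unified}, the limit \eqref{Limit-exists-N} existing on a ${\sf CD}(0,N)$ space by Bishop--Gromov. Two remarks on this part. First, Proposition \ref{proposition-co-area} is a layer-cake/change-of-variables identity for \emph{radial} functions, not the coarea formula; the proof of \eqref{Polya-Szego} you sketch needs the genuine BV coarea formula in metric measure spaces (Miranda \cite{Miranda}, Mondino--Semola \cite{MSemola}) together with \eqref{eqn-isoperimetric}, and the paper does not reprove it but quotes it from Nobili--Violo \cite{NV-new} --- a citation slip rather than a mathematical one. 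Second, in (ii)$\Rightarrow$(i) you need \emph{equality} in the gradient comparison, not only the Lipschitz upper bound; the paper computes ${\sf Ch}_p(u)$ exactly using $|\nabla u|=|{\rm lip}_{\sf d}u|$ from \eqref{Cheeger-equality} and the eikonal equation \eqref{eikonal}, though your chain can alternatively be closed by sandwiching the lip-upper bound against the P\'olya--Szeg\H o lower bound (with $u^*=u_0$), so this is a gloss, not a failure.

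The genuine gap is in (i)$\Rightarrow$(ii). After reducing to equality in \eqref{Polya-Szego}, you assert that isoperimetric equality on $\mu_u$-a.e.\ superlevel set identifies them as metric balls about a \emph{single} tip $x_0$ with ${\sf m}(B_{x_0}(\rho))={\sf AVR}(X)\omega_N\rho^N$, ``i.e.\ $X$ is an $N$-volume cone by the volume-cone rigidity'' --- but that implication is precisely the deep theorem you would have to prove, and your closing sentence concedes you do not have it. Equality in \eqref{eqn-isoperimetric} for a single set, and a fortiori the passage from a.e.-level equality to a common center, concentric-ball superlevel sets, and the global cone structure \eqref{volume-cone} together with the radiality \eqref{u=u^*}, is the content of Nobili--Violo's rigidity theorem \cite[Theorem 1.3]{NV-new} (resting on \cite{CMan} and related work), which the paper invokes as a black box exactly at this point: equality in \eqref{Polya-Szego} with both sides nonzero and finite on an essentially non-branching ${\sf CD}(0,N)$ space yields the $N$-volume cone structure, and the hypothesis $u_0'\neq 0$ a.e.\ on ${\rm supp}(u_0)$ (via $u^*=u_0$, which your uniqueness step correctly supplies) activates the condition $(u^*)'\neq 0$ a.e.\ on $\{{\rm ess\,inf}\,u<u^*<{\rm ess\,sup}\,u\}$ needed for $u(x)=u^*\big({\sf AVR}(X)^{1/N}{\sf d}(x_0,x)\big)$. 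So your proof becomes complete once this step is replaced by the citation of \eqref{volume-cone}--\eqref{u=u^*}; as written, the rigidity you flag as ``the main obstacle'' is left unproven, and it is not recoverable by the elementary level-set argument you outline.
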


The key ingredient in the proof of Theorem \ref{main-theorem-CD} is the non-smooth P\'olya--Szeg\H o inequality together with the investigation of the equality case,  studied by Nobili and Violo \cite{NV-new};  this result uses a suitable rearrangement from the metric measure space $(X,{\sf d},{\sf m})$ to the 1-dimensional model cone $(\mathbb R_+,|\cdot|,{\sf m}_{N})$, the sharp isoperimetric inequality on ${\sf CD}(0,N)$ spaces from \cite{BK}, and the co-area formula. The sharpness of \eqref{GN-CD}  directly follows by Theorem \ref{main-theorem-unified}. 
	As in Theorem \ref{main-theorem-unified}, one can discuss the limit cases of \eqref{GN-CD}, i.e., logarithmic-Sobolev   and Moser--Trudinger inequalities, see \S \ref{section-borderline-0}.  
	
In order to emphasize the usefulness of Theorem \ref{main-theorem-CD}, we state, as a simple consequence, the sharp Nash inequality on Riemannian manifolds with non-negative Ricci curvature, which seems to be new in the literature.  
We recall that for $p=q=2$, $r=1$ and $\theta=\frac{n}{n+2}$, $n\geq 2$, the Gagliardo--Nirenberg inequality \eqref{GN-1-dimensional-model} reduces to the famous (radial) \textit{Nash inequality}, the
best constant 
 being given by Carlen and Loss \cite{Carlen-Loss}, i.e., 
\vspace{-0.3cm}
\begin{equation}\label{Nash-constant}
	{\sf CL}_n=\left(\frac{n+2}{2}\right)^\frac{1}{2}\omega_n^{-\frac{1}{n+2}}\left(\frac{n}{2}j^2_\frac{n}{2}\right)^{-\frac{n}{2(n+2)}},
\end{equation}
where $j_\nu$ stands for the first positive root of the Bessel function $J_\nu$ of order $\nu$, while the unique 
extremal function -- up to scaling and multiplicative factor -- has compact support, expressed in terms of Bessel functions (thus, not having a Barenblatt profile). 


\begin{theorem}\label{main-theorem-Nash} {\rm (Sharp Nash inequality)} Let  $(M,g)$ be an $n$-dimensional complete  Riemannian manifold with non-negative Ricci curvature and ${\sf AVR}(M)>0$, and let $\nu=\frac{n}{2}-1\geq 0$. Then 
	\begin{equation}\label{Nash-inequality-AVR}
		\displaystyle  	\|u\|_{L^2(M,{\rm d}v_g)}\leq {\sf AVR}(M)^{-\frac{1}{n+2}} {\sf CL}_n \|\nabla_g u\|_{L^2(M,{\rm d}v_g)}^{\frac{n}{n+2}} \|u\|_{L^1(M,{\rm d}v_g)}^{\frac{2}{n+2}},\ \ \forall  u\in  W^{1,2}(M),
	\end{equation} 
	and the constant ${\sf AVR}(M)^{-\frac{1}{n+2}} {\sf CL}_n$  is sharp. 
	
	Moreover,  equality holds in \eqref{Nash-inequality-AVR} for some $u\in W^{1,2}(M)\setminus\{0\}$ if and only if $(M,g)$ is isometric to the Euclidean space $\mathbb R^n,$ and  up to scaling and a multiplicative factor, for some   $x_0\in M$ one has 
	\begin{equation}\label{u-Nash-extrem}
		u(x)=\ds\left\{ 
		\begin{array}{lll}
			1-\frac{1}{J_{\nu}(j_{\nu+1})}{{\sf d}_g(x_0,x)}^{-\nu} J_{\nu}\left(j_{\nu+1}{{\sf d}_g(x_0,x)}\right)\ &\text {if}&  x\in B_{x_0}(1); \\
			
			0 &\text {if}&  x\notin B_{x_0}(1).
		\end{array}\right.
	\end{equation}
\end{theorem}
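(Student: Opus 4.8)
The plan is to obtain Theorem~\ref{main-theorem-Nash} as a direct specialization of Theorem~\ref{main-theorem-CD}, using the fact that a complete Riemannian manifold with non-negative Ricci curvature is an essentially non-branching ${\sf CD}(0,n)$ space (here $N=n$). First I would verify that the one-dimensional model inequality \eqref{GN-1-dimensional-model} with the Nash parameters $p=q=2$, $r=1$, $\theta=\frac{n}{n+2}$ satisfies the balance condition \eqref{balance-condition}: indeed $\frac{1}{q}=\frac12$ and $\theta(\frac1p-\frac1N)+\frac{1-\theta}{r}=\frac{n}{n+2}\cdot\frac{N-2}{2N}+\frac{2}{n+2}$, which collapses to $\frac12$ when $N=n$. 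Next I would invoke the Carlen--Loss result \cite{Carlen-Loss}: on $(\mathbb R_+,|\cdot|,{\sf m}_n)$ the sharp radial Nash inequality holds with best constant ${\sf CL}_n$ as in \eqref{Nash-constant}, and the extremal $u_0$ is unique up to scaling and multiplicative factor, is non-increasing, compactly supported on some $[0,R_0]$ with $R_0<\infty$, and is given explicitly through Bessel functions. With $R_0$ finite the delicate asymptotic conditions in \eqref{extremal-asymptotics} for the case $R_0=\infty$ are vacuous, and one only needs $|u_0'|^2\in BV_{\rm loc}([0,R_0))$, which follows from the $C^1$-regularity and piecewise smoothness of the Bessel profile.

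Once hypotheses (i)--(iii) of Theorem~\ref{main-theorem-CD} are checked, the sharp inequality \eqref{Nash-inequality-AVR} is immediate by substituting ${\sf K}_{\rm opt}={\sf CL}_n$ and $N=n$ into \eqref{GN-CD}, with the factor ${\sf AVR}(M)^{-\theta/N}={\sf AVR}(M)^{-1/(n+2)}$, and its sharpness follows from the optimality clause of that theorem (ultimately from Theorem~\ref{main-theorem-unified}). The substantive part is the equality case. I would apply the equivalence (i)$\Leftrightarrow$(ii) of Theorem~\ref{main-theorem-CD}, whose applicability requires checking the uniqueness of $u_0$ (granted by Carlen--Loss) and the condition $u_0'\neq 0$ a.e.\ on $\mathrm{supp}(u_0)$. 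The latter must be read off the explicit Bessel extremal: on $[0,R_0]$ the derivative of the profile vanishes only at isolated points (the critical points of the Bessel combination), so $u_0'\neq 0$ a.e.\ holds. Then statement (ii) of Theorem~\ref{main-theorem-CD} tells us that equality forces $X=M$ to be an $n$-volume cone with some tip $x_0$, and that $u$ is the radial pullback $u(x)=u_0\big({\sf AVR}(M)^{1/n}{\sf d}_g(x_0,x)\big)$.

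The key remaining step is to upgrade the ``$n$-volume cone'' conclusion to the genuine rigidity statement ``$(M,g)$ is isometric to $\mathbb R^n$.'' Here I would use the smooth structure: a complete $n$-dimensional Riemannian manifold with $\mathrm{Ric}\geq 0$ that is an $n$-volume cone must have ${\sf AVR}(M)=1$ and, by the equality case of the Bishop--Gromov comparison (the rigidity in the volume-cone / metric-cone splitting, going back to Cheeger--Gromoll and the standard Ricci rigidity), be isometric to $\mathbb R^n$; in particular ${\sf AVR}(M)=1$ makes the constant ${\sf AVR}(M)^{-1/(n+2)}{\sf CL}_n$ reduce to ${\sf CL}_n$. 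I anticipate this smooth-rigidity promotion to be the main obstacle, since Theorem~\ref{main-theorem-CD} only yields the synthetic cone structure, and one must argue that for a smooth manifold this cone must be the flat Euclidean cone (the tip is a smooth point, the cross-section is the round sphere). Finally, with ${\sf AVR}(M)=1$ the pullback formula simplifies to $u(x)=u_0({\sf d}_g(x_0,x))$, and substituting the explicit Carlen--Loss Bessel profile for $u_0$ -- normalized so that the support radius is $R_0=1$ and written with $\nu=\frac n2-1$ and first positive root $j_{\nu+1}$ of $J_{\nu+1}$ -- yields exactly the closed form \eqref{u-Nash-extrem}, with the piecewise definition reflecting $\mathrm{supp}(u_0)=[0,1]$.
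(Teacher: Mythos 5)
Your proposal is correct and follows essentially the same route as the paper: Theorem \ref{main-theorem-Nash} is obtained there by specializing Theorem \ref{main-theorem-CD} to the Nash parameters $p=q=2$, $r=1$, $\theta=\frac{n}{n+2}$ with the Carlen--Loss model inequality and its compactly supported Bessel extremal (packaged in the paper as Theorems \ref{Nash-theorem} and \ref{Nash-theorem-CD}); incidentally, your $\theta=\frac{n}{n+2}$ is the correct value, the paper's proof of Theorem \ref{Nash-theorem} carrying a harmless typo $\theta=\frac{2}{N+2}$. Two sub-steps are discharged differently, and both of your versions are sound. First, where you defer to Carlen--Loss for the qualitative properties of $u_0$ (non-negativity, monotonicity, $u_0'\neq 0$ a.e.), the paper proves them directly via Proposition \ref{Bessel-lemma}, essentially because it also wants the model inequality for arbitrary real $N>1$; for the integer $n$ of Theorem \ref{main-theorem-Nash} your citation suffices, and in fact the Bessel identity gives $u_0'(t)=j_{\nu+1}\,t^{-\nu}J_{\nu+1}(j_{\nu+1}t)/J_{\nu}(j_{\nu+1})<0$ on $(0,1)$ (recall $J_{\nu}(j_{\nu+1})<0$), which is stronger than the a.e.\ non-vanishing you need. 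Second, for promoting the $N$-volume cone conclusion to an isometry with $\mathbb R^n$, the paper invokes the ${\sf RCD}$ volume-cone-to-metric-cone theorem of De Philippis--Gigli (Remark \ref{remark-particularize}/(b)), whereas you use the elementary smooth argument: the volume-cone identity combined with the local density $\lim_{r\to 0}V_g(B_{x_0}(r))/(\omega_n r^n)=1$ forces ${\sf AVR}(M)=1$, and then equality in Bishop--Gromov yields the isometry; your variant is more self-contained in the smooth category, while the paper's reference buys the statement uniformly for non-smooth ${\sf RCD}$ spaces. All remaining steps in your plan -- the balance condition, the vacuity of the $R_0=\infty$ clauses of \eqref{extremal-asymptotics} since $R_0=1$, the $BV_{\rm loc}$ property of $|u_0'|^2$, sharpness via Theorem \ref{main-theorem-unified}, and the converse implication via (ii)$\implies$(i) of Theorem \ref{main-theorem-CD} -- match the paper's argument.
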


In particular, Theorem \ref{main-theorem-Nash} directly implies the rigidity result of Druet, Hebey and  Vaugon \cite{DHV}: if the Nash inequality on $(M,g)$ holds with the constant ${\sf CL}_n$, by the sharpness of \eqref{Nash-inequality-AVR}, one has that ${\sf AVR}(M)^{-\frac{1}{n+2}} {\sf CL}_n\leq {\sf CL}_n$, i.e., ${\sf AVR}(M)\geq 1$,  which implies that $(M,g)$ is isometric to $\mathbb R^n.$

The paper is organized as follows. In Section \ref{section-prel} we recall those notions and results that are needed in the paper. 
In Section \ref{section-unified} we prove the unified volume growth estimate on metric measure spaces, see Theorem \ref{main-theorem-unified}. Section \ref{section-4} is devoted to the proof of Theorem \ref{main-theorem-CD}. In Section \ref{section-5} we 
 discuss some particular cases of the Gagliardo--Nirenberg--Sobolev  inequality (Sobolev, Nash, Morrey and Faber--Krahn inequalities), while in Section \ref{section-borderline-0} we deal with its genuine limit cases, i.e., logarithmic-Sobolev and   Moser--Trudinger inequalities. Finally, Section \ref{section-final} is devoted to some concluding remarks.

\section{Preliminaries}\label{section-prel}

\subsection{Metric measure spaces, change of variables \& $p$-Cheeger energy}\label{section-Cheeger}

Let  $(X,{\sf d}, {\sf m})$ be a metric measure space, i.e.,
$(X,{\sf d})$ is a complete separable metric space and 
$ {\sf m}$ is a locally finite measure on $X$ endowed with its
Borel $\sigma$-algebra. We assume that supp$({\sf m})=X$. 

For  every $p >0$ and open set $\emptyset\neq \Omega\subseteq X$, we denote by $$L^p(\Omega,{\sf m})=\left\{ u\colon \Omega\to\mathbb{R}: u\text{ is measurable, }\displaystyle\int_\Omega\vert u\vert^p\,{\rm d}{\sf m}<\infty\right\}, $$ the set of $p$-integrable functions over $\Omega$;  functions in the latter definition which are equal {\sf m}-a.e.\ are  identified. The norm on $L^p(\Omega,{\sf m})$ is denoted by $\|\cdot\|_{L^p(\Omega,{\sf m})}$. When ${\sf m}=\mathcal L^n$ is the $n$-dimensional Lebesgue measure, we simply use  $L^p(\Omega)$ instead of $L^p(\Omega,\mathcal L^n)$ for every open set $\emptyset\neq \Omega\subset \mathbb R^n.$

Through the paper, we need the following change of variables formula on  metric measure spaces. 

\begin{proposition}\label{proposition-co-area} Let $(X,{\sf d},{\sf m})$ be a metric measure space, $x_0\in X$, $R>0$ $($possibly $R=+\infty$$)$ and  $h:[0,R)\to \mathbb R$ be a continuous, locally BV-function such that $h(t){\sf m}(B_{x_0}(t))=\mathcal O(1)$ as $t\nearrow R$ and $h'(t)\,{\sf m}(B_{x_0}(t))\in L^1([0,R))$. 
	If $\rho_{x_0}={\sf d}(x_0,\cdot)$ is the distance  from $x_0\in X$, then $h\circ \rho_{x_0}\in L^1(B_{x_0}(R))$ and 
	\begin{equation}\label{layer-cake}
		\int_{B_{x_0}(R)} h\circ \rho_{x_0}\, {\rm d}{\sf m}=\lim_{t\nearrow R} h(t){\sf m}(B_{x_0}(t))  -\int_0^R h'(t)\,{\sf m}(B_{x_0}(t)){\rm d}t.
	\end{equation}
\end{proposition}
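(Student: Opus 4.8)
The plan is to reduce the statement to a one-dimensional integration-by-parts identity through the distance function, and then to pass to the limit $t\nearrow R$. Write $V(t):={\sf m}(B_{x_0}(t))$; by local finiteness of ${\sf m}$ the function $V$ is finite on $[0,R)$, non-negative, non-decreasing (hence of bounded variation on each $[0,s]$ with $s<R$), left-continuous, and $V(0)=0$. The first step is to record the push-forward identity. Let $\mu$ be the push-forward under $\rho_{x_0}$ of the restriction of ${\sf m}$ to $B_{x_0}(R)$; then $\mu([0,t))={\sf m}(B_{x_0}(t))=V(t)$ for every $t\in[0,R)$, so that $\mu$ is exactly the Lebesgue--Stieltjes measure ${\rm d}V$ attached to the left-continuous non-decreasing function $V$. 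Consequently $\int_{B_{x_0}(R)} g\circ\rho_{x_0}\,{\rm d}{\sf m}=\int_{[0,R)} g\,{\rm d}V$ for every Borel $g\ge 0$, and the same identity holds whenever $g\circ\rho_{x_0}$ is ${\sf m}$-integrable.

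The heart of the argument is a truncated integration by parts on $[0,s]$ for $s<R$. Using that $h$ is continuous and locally BV, I write $h(\rho_{x_0}(x))=h(0)+\int_{(0,\rho_{x_0}(x)]}{\rm d}h$ and integrate over $B_{x_0}(s)$; Fubini's theorem is legitimate here because $\int_{(0,s)}\big(V(s)-V(\tau)\big)\,{\rm d}|Dh|(\tau)\le V(s)\,|Dh|((0,s))<\infty$, the total variation of $h$ on $[0,s]$ being finite. Computing the inner mass ${\sf m}(\{x\in B_{x_0}(s):\rho_{x_0}(x)\ge\tau\})=V(s)-V(\tau)$ and using continuity of $h$ (so that $\int_{(0,s)}{\rm d}h=h(s)-h(0)$) yields, after the terms involving $h(0)V(s)$ cancel,
\begin{equation*}
\int_{B_{x_0}(s)} h\circ\rho_{x_0}\,{\rm d}{\sf m}=h(s)\,{\sf m}(B_{x_0}(s))-\int_{(0,s)} V\,{\rm d}h,
\end{equation*}
which, when ${\rm d}h=h'\,{\rm d}t$, is precisely the asserted formula with $R$ replaced by $s$.

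To pass to the limit $s\nearrow R$ I first establish integrability. Running the same computation with $|h|$ (again continuous and locally BV) and bounding $\big|\int_{(0,s)} V\,{\rm d}|h|\big|\le \int_{(0,s)} V\,{\rm d}|Dh|\le \int_0^R |h'(t)|\,{\sf m}(B_{x_0}(t))\,{\rm d}t$ gives, together with the hypothesis $h(t){\sf m}(B_{x_0}(t))=\mathcal O(1)$, a bound on $\int_{B_{x_0}(s)}|h|\circ\rho_{x_0}\,{\rm d}{\sf m}=\int_{(0,s)}|h|\,{\rm d}V$ that is uniform in $s<R$; monotone convergence then shows $h\circ\rho_{x_0}\in L^1(B_{x_0}(R))$. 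With integrability in hand, the left-hand side above converges to $\int_{B_{x_0}(R)} h\circ\rho_{x_0}\,{\rm d}{\sf m}$ by dominated convergence, while $\int_{(0,s)} V\,{\rm d}h\to\int_0^R h'(t)\,{\sf m}(B_{x_0}(t))\,{\rm d}t$ by the $L^1$-hypothesis; hence the boundary term $h(s){\sf m}(B_{x_0}(s))$ necessarily converges as $s\nearrow R$, and rearranging gives \eqref{layer-cake}.

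The routine ingredients --- finiteness of $V$ on $[0,R)$, the elementary measure computations, and the monotone/dominated convergence passages --- are straightforward. The genuinely delicate points, where I would concentrate the rigor, are the justification of Fubini's theorem on the truncated balls and the bookkeeping of the BV structure of $h$: interpreting $\int V\,{\rm d}h$ through the distributional derivative $Dh$ and identifying it with $\int_0^\cdot h'(t)\,{\sf m}(B_{x_0}(t))\,{\rm d}t$ requires $h$ to be locally absolutely continuous, so that $Dh=h'\,\mathcal L^1$ carries no singular part; under the mere continuous locally BV assumption the formula holds verbatim with ${\rm d}h$ in place of $h'(t)\,{\rm d}t$.
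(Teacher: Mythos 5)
Your proof is correct, and it reaches the same truncated identity as the paper but by a different mechanism. The paper's proof fixes $t_0<R$, splits $h=h_1-h_2$ via the Jordan decomposition into monotone pieces, applies the layer-cake representation to each $h_i\circ\rho_{x_0}$, and then performs the change of variables $s=h_i(t)$ to land on
\begin{equation*}
\int_{B_{x_0}(t_0)} h\circ\rho_{x_0}\,{\rm d}{\sf m}=h(t_0)\,{\sf m}(B_{x_0}(t_0))-\int_0^{t_0}h'(t)\,{\sf m}(B_{x_0}(t))\,{\rm d}t,
\end{equation*}
before letting $t_0\nearrow R$. You instead work directly with the distributional derivative: writing $h(\rho_{x_0}(x))=h(0)+\int_{(0,\rho_{x_0}(x)]}{\rm d}h$ and swapping the order of integration via Fubini (justified by the finite total variation of $h$ on $[0,s]$ and $V(s)<\infty$), which produces the same identity without any monotone decomposition. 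The two arguments are at bottom the same integration by parts between $V$ and $h$, but your route buys three things the paper leaves implicit: (1) an explicit proof of the asserted $L^1$ membership of $h\circ\rho_{x_0}$, via the uniform-in-$s$ bound and monotone convergence; (2) the observation that the existence of the limit $\lim_{t\nearrow R}h(t){\sf m}(B_{x_0}(t))$ (which the statement tacitly asserts, the hypothesis being only $\mathcal O(1)$) follows automatically because the other two terms in the truncated identity converge; and (3) a clean isolation of the one genuinely delicate point, namely that identifying $\int V\,{\rm d}h$ with $\int h'(t)V(t)\,{\rm d}t$ requires $Dh$ to have no singular continuous part. This last caveat applies equally to the paper's proof: the substitution $s=h_i(t)$, ${\rm d}s=h_i'(t)\,{\rm d}t$ in the layer-cake step is only valid when the monotone pieces are absolutely continuous (a Cantor-type $h$ would break the formula as stated with $h'$), so your closing remark that the identity holds verbatim with ${\rm d}h$ in place of $h'(t)\,{\rm d}t$ for merely continuous locally BV $h$ is an accurate sharpening rather than a defect; in all of the paper's applications $h$ is $C^1$ or piecewise smooth, so nothing is lost either way.
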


\begin{proof} Let $t_0\in (0,R)$ be arbitrarily fixed. Since $h$ is a BV-function in $[0,t_0]$, its Jordan decomposition provides two non-increasing functions $h_1,h_2:[0,t_0]\to \mathbb R$ such that $h=h_1-h_2$ on $[0,t_0]$. Without loss of generality, we may assume that $h_i,$ $i\in \{1,2\}$, are non-negative on $[0,t_0]$. 
	By the layer cake representation and a change of variable, one has for $i\in \{1,2\}$ that 
	\begin{align*}
		\int_{B_{x_0}(t_0)} h_i\circ \rho_{x_0}\, {\rm d}{\sf m}&=\int_0^\infty{\sf m}\left(\{x\in B_{x_0}(t_0):h_i\circ \rho_{x_0}(x)>s\}\right){\rm d}s\\&=\int_{0}^{h_i(t_0)}{\sf m}\left(\{x\in B_{x_0}(t_0):h_i({\sf d}(x_0,x))>s\}\right){\rm d}s\\&\ \ \ \ +\int_{h_i(t_0)}^{h_i(0)}{\sf m}\left(\{x\in B_{x_0}(t_0):h_i({\sf d}(x_0,x))>s\}\right){\rm d}s\ \ \ \  \ \ [s=h_i(t)] \\&=h_i(t_0){\sf m}(B_{x_0}(t_0))+\int_{t_0}^0 h_i'(t){\sf m}(B_{x_0}(t)){\rm d}t.
	\end{align*}
	Therefore, one has that 
	$$\int_{B_{x_0}(t_0)} h\circ \rho_{x_0}\, {\rm d}{\sf m}=h(t_0){\sf m}(B_{x_0}(t_0))-\int_0^{t_0} h'(t){\sf m}(B_{x_0}(t)){\rm d}t.$$		
	Letting $t_0\to R$ and using the assumptions $h(t){\sf m}(B_{x_0}(t))=\mathcal O(1)$ as $t\nearrow R$ and $h'(t)\,{\sf m}(B_{x_0}(t))\in L^1([0,R))$, relation \eqref{layer-cake} follows at once. \end{proof}

Let ${\rm Lip}(\Omega)$  (resp. ${\rm Lip}_{bs}(\Omega)$, ${\rm Lip}_c(\Omega)$, and ${\rm Lip}_{\rm loc}(\Omega)$) be the space of real-valued Lipschitz  (resp.\ boundedly supported Lipschitz,
 compactly  supported Lipschitz,  and  locally Lipschitz) functions over $\Omega$. 
 If $u\in {\rm Lip}_{\rm loc}(X)$,  the \textit{local Lipschitz constant} $|{\rm lip}_{\sf d} u|(x)$ of $u$ at $x\in X$  is 
$$|{\rm lip}_{\sf d} u|(x)=\limsup_{y\to x}\frac{|u(y)-u(x)|}{{\sf d}(x,y)}.$$

 Following  Ambrosio,  Gigli and Savar\'e \cite{AGS} and  Cheeger \cite{Cheeger},  we introduce the Sobolev space over the metric measure space $(X,{\sf d}, {\sf m})$. To do this, let $p>1$. The $p$-\textit{Cheeger energy} ${\sf Ch}_p:L^p(X,{\sf m})\to [0,\infty]$ is defined as the convex and lower semicontinuous functional 
$${\sf Ch}_p(u)=\inf\left\{\liminf_{n\to \infty} \int_X |{\rm lip}_{\sf d} u_n|^p {\rm d} {\sf m}:(u_n)\subset {\rm Lip}(X)\cap L^p(X,{\sf m}),\ u_n\to u\ {\rm in}\ L^p(X,{\sf m})  \right\}.$$
Then  $$W^{ 1,p}(X,{\sf m})=\{u\in L^p(X,{\sf m}):{\sf Ch}_p(u)<\infty\}$$ is  the $p$-Sobolev space over $(X,{\sf d}, {\sf m})$, endowed with the norm 
$\|u\|_{W^{ 1,p}}=\left(\|u\|^p_{L^p(X,{\sf m})}+{\sf Ch}_p(u)\right)^{1/p}. $
Note that $W^{ 1,p}(X,{\sf m})$ is a Banach space. By the relaxation of the $p$-Cheeger energy,  one can define the minimal $ {\sf m}$-a.e.\ object $|\nabla u |_p \in  L^p(X, {\sf m})$, the so-called  \textit{minimal $p$-weak upper gradient of} 
$u \in L^p(X,{\sf m})$ 
such that 
$${\sf Ch}_p(u)= \int_X |\nabla u|_p^p\, {\rm d} {\sf m}.$$
When no confusion arises, we shall write $\|\nabla u\|_{L^p(X,{\sf m})}$ instead of ${\sf Ch}_p^{1/p}(u)$. 

Given $u\in L_{\rm loc}^p(\Omega)$, we say that $u\in W_{\rm loc}^{1,p}(\Omega,{\sf m})$ whenever $\eta u\in W^{1,p}(X,{\sf m})$ for every $\eta\in {\rm Lip}_{bs}(\Omega)$ (assuming $\eta u$ is extended by zero in  $X\setminus \Omega$) and we set $|\nabla u|_p:=|\nabla (\eta u)|_p$ ${\sf m}$-a.e. on $\{\eta=1\}$. We notice that the term $|\nabla u|_p$ introduced in this way is well-defined, see Gigli and Pasqualetto \cite{GP}. 
According to  Cheeger \cite{Cheeger}, if $u\in {\rm Lip}_{\rm loc}(X)$,   one has that 
\begin{equation}\label{Cheeger-0-ineq}
	|\nabla u|_p\leq |{\rm lip}_{\sf d}u| \ \ {\sf m}{\rm -a.e.\ on}\  X.
\end{equation}  

\subsection{${\sf CD}(0,N)$ spaces, rearrangement \& P\'olya--Szeg\H o inequality} \label{section-rearrangement}


Let	$P_2(X,{\sf d})$ be the
$L^2$-Wasserstein space of probability measures on $X$, and
$P_2(X,{\sf d}, {\sf m})$ be the subspace of
$ {\sf m}$-absolutely continuous measures on $X$.
Given  $N> 1,$ let 
${\rm Ent}_N(\cdot| {\sf m}):P_2(X,{\sf d})\to \mathbb R$ be  
the {\it R\'enyi entropy functional} with
respect to the measure $ {\sf m}$, defined by 
\begin{equation}\label{entropy}
	{\rm Ent}_N(\nu| {\sf m})=-\int_X \rho^{-\frac{1}{N}}{\rm d}\nu=-\int_X \rho^{1-\frac{1}{N}}{\rm d} {\sf m},
\end{equation}
where $\rho$ is the density function of $\nu^{\rm ac}$ in the decomposition 
$\nu=\nu^{\rm ac}+\nu^{\rm s}=\rho  {\sf m}+\nu^{\rm s}$, where $\nu^{\rm ac}$ and $\nu^{\rm s}$
stand for the absolutely continuous and singular parts of $\nu\in
P_2(X,{\sf d}),$ respectively.

The \textit{curvature-dimension condition} ${\sf CD}(0,N)$
states that for all $N'\geq N$ the functional ${\rm Ent}_{N'}(\cdot|\,{\sf m})$ is
convex on the $L^2$-Wasserstein space $P_2(X, {\sf d}, {\sf m})$, i.e.,  for each
$ {\sf m}_0, {\sf m}_1\in  P_2(X,{\sf d}, {\sf m})$ there exists
a geodesic
$\Gamma:[0,1]\to  P_2(X,{\sf d}, {\sf m})$ joining
$ {\sf m}_0$ and $ {\sf m}_1$ such that for every $s\in [0,1]$ one has 
$${\rm Ent}_{N'}(\Gamma(s)| {\sf m})\leq (1-s) {\rm Ent}_{N'}( {\sf m}_0| {\sf m})+s {\rm Ent}_{N'}( {\sf m}_1| {\sf m}),$$
see Lott and Villani \cite{LV} and Sturm \cite{Sturm-2}.

In the sequel, let $(X,{\sf d}, {\sf m})$ be a ${\sf CD}(0,N)$ space for some $N>1$. Due to Sturm \cite{Sturm-2},  the Bishop--Gromov comparison principle states that 
$$
r\mapsto \frac{ {\sf m}(B_x(r))}{r^{N}},\ \ r>0,$$
is non-increasing on $[0,\infty)$ for every $x\in X$. In particular, the \textit{asymptotic volume ratio}
$${\sf AVR}(X)=\lim_{r\to \infty}\frac{ {\sf m}(B_x(r))}{\omega_Nr^N},$$
is well-defined, i.e., it exists and is independent of the choice of $x\in X$.

If ${\sf AVR}(X)>0 $, we have the following \textit{isoperimetric inequality} on $(X,{\sf d}, {\sf m})$: for  every bounded Borel subset $\Omega\subset X$ one has 
\begin{equation}\label{eqn-isoperimetric}
	{\sf m}^+(\Omega)\geq N\omega_N^\frac{1}{N}{\sf AVR}(X)^\frac{1}{N} {\sf m}(\Omega)^\frac{N-1}{N},
\end{equation}
and the constant $N\omega_N^\frac{1}{N}{\sf AVR}_ {\sf m}^\frac{1}{N}$ in  \eqref{eqn-isoperimetric} is sharp, see Balogh and Krist\'aly \cite{BK}. Here,
\begin{equation}\label{minkowski-content}
	{ {\sf m}}^+(\Omega)=\liminf_{\varepsilon \to 0^+}\frac{ {\sf m}(\Omega_\varepsilon\setminus \Omega)}{\varepsilon},
\end{equation}
stands for the \textit{Minkowski content} of $\Omega\subset X$,
where $\Omega_\varepsilon=\{x\in X:\exists\, y\in \Omega\ {\rm such\ that}\ {\sf d}(x,y)< \varepsilon\}$ is the $\varepsilon$-neighborhood of $\Omega$ w.r.t.\  the metric ${\sf d}$. 
 Inequality \eqref{eqn-isoperimetric} has been stated  on Riemannian manifolds by Agostiniani,  Fogagnolo and  Mazzieri \cite{AFM, Fogagnolo-Maz-JFA} in low dimensions (up to dimension 7) and Brendle \cite{Brendle} in any dimension. 

 Since $(X,{\sf d}, {\sf m})$ is a ${\sf CD}(0,N)$ space, we know by Rajala \cite{Rajala}    that $(X,{\sf d}, {\sf m})$ is doubling (by Bishop--Gromov comparison principle) and  supports the $(1,p)$-Poincar\'e inequality for every $p\geq 1$. As a consequence of these results, it follows that $|\nabla u |_p$
 is independent of $p>1$, denoted simply as $|\nabla u |,$   see Cheeger \cite{Cheeger} and Gigli \cite{Gigli}, and instead of \eqref{Cheeger-0-ineq},  for every $u\in {\rm Lip}_{\rm loc}(X)$ one has  
\begin{equation}\label{Cheeger-equality}
	|\nabla u |=|{\rm lip}_{\sf d}u| \ \ {\sf m}{\rm -a.e.\ on}\  X,
\end{equation}
see Cheeger \cite[Theorem 6.1]{Cheeger}. 
In addition, by \eqref{Cheeger-equality}, one has  the eikonal equation
\begin{equation}\label{eikonal}
	|\nabla \rho_{x_0}|=1\ \ {\sf m}{\rm -a.e.\ on }\ X,
\end{equation}
where $\rho_{x_0}={\sf d}(x_0,\cdot)$;  
see e.g.\  Gigli \cite[Theorem 5.3]{Gigli}.

For a measurable function $u\colon X\to[0,\infty)$, let $u^*\colon[0,\infty)\to[0,\infty)$ be the \textit{non-increasing rearrangement} of $u$ defined on the 1-dimensional model space $(\mathbb R_+,\vert\cdot\vert,{\sf m}_N=N\omega_Nr^{N-1}\mathcal L^1)$ so that
\begin{align}\label{rearrangement}
	{\sf m}\big(M_t(u)\big)={\sf m}_N(M_t(u^*)),\ \ \forall t>0,
\end{align}
where
\begin{align*}
	M_t(u)=\lbrace x\in X: u(x)>t\rbrace\;\text{ and }\;M_t(u^*)=\lbrace y\in[0,\infty):u^*(y)>t\rbrace,
\end{align*}
whenever ${\sf m}\big(M_t(u)\big)<\infty$ for every $t>0$. 
In particular, the open set $\Omega\subset X$ can be rearranged into the interval $\Omega^*=[0,r)$ with $ {\sf m}(\Omega)={\sf m}_N([0,r))=\omega_N r^N,$ with the convention that $\Omega^*=[0,\infty)$ whenever $ {\sf m}(\Omega)=+\infty$. By the layer cake representation, one has for every continuous function $F\colon[0,\infty)\to \mathbb R$ and $u\colon X\to[0,\infty)$  the \textit{Cavalieri principle}: 
\begin{equation}\label{Cavallieri}
\int_\Omega	F(u){\rm d}{\sf m}= \int_{\Omega^*}	F(u^*){\rm d}{\sf m}_N,
\end{equation}
assuming that the integrals are well-defined. 

Since $(X,{\sf d}, {\sf m})$ verifies the  ${\sf CD}(0,N)$ condition, by the isoperimetric inequality \eqref{eqn-isoperimetric} and the co-area formula (see e.g. Miranda \cite{Miranda}, Mondino and Semola \cite{MSemola}), Nobili and Violo \cite{NV-new} (see also \cite{NV,NV-adv}) proved recently  the following \textit{P\'olya--Szeg\H o inequality}:  for every $p>1$ and $u\in W^{ 1,p}_{\rm loc}(X,{\sf m})$, one has
\begin{equation}\label{Polya-Szego}
	{\sf Ch}_p(u)=\int_X |\nabla u|^p{\rm d} {\sf m}\geq {\sf AVR}(X)^\frac{p}{N}\int_0^\infty| (u^*)'|^p {\rm d}{\sf m}_N.
\end{equation}
Moreover, if the left hand side is finite then $u^*$ is locally absolutely continuous on its domain of definition. Inequality \eqref{Polya-Szego} is crucial in the proof of Theorem \ref{main-theorem-CD}. 
The analogue of \eqref{Polya-Szego} on Riemannian manifolds with non-negative Ricci curvature has been established in  \cite{BK}. 

The description of the equality case in the P\'olya--Szeg\H o inequality \eqref{Polya-Szego} is more delicate, even in the smooth setting. By using fine analysis of sets of finite perimeter,  Brothers and Ziemer \cite{BZ} characterized the equality in \eqref{Polya-Szego} in the Euclidean setting. Under certain regularity assumptions, a similar result to \cite{BZ} is provided  in \cite{BK} in the setting of Riemannian manifolds with non-negative Ricci curvature. As far as we know -- by using a general rearrangement technique  and  the sharp isoperimetric inequality \eqref{eqn-isoperimetric} -- the most general setting is handled by Nobili and Violo \cite[Theorem 1.3]{NV-new}: 
 if $(X,{\sf d}, {\sf m})$ is an essentially non-branching ${\sf CD}(0,N)$ space and equality holds for some  $u\in W^{ 1,p}_{\rm loc}(X,{\sf m})$ and 
both sides being non-zero and finite in \eqref{Polya-Szego}, then for some $x_0\in X$, the  space $(X,{\sf d}, {\sf m})$ is an $N$\textit{-volume cone} with the tip $x_0$, i.e., 
\begin{equation}\label{volume-cone}
	\frac{ {\sf m}(B_{x_0}(r))}{\omega_Nr^N}={\sf AVR}(X),\ \ \forall r>0.
\end{equation}
In addition, if $(u^*)'\neq 0$ a.e.\ on the set $\{{\rm ess\, inf}\, u<u^*<{\rm ess\, sup}\, u\}$ then $u$ is $x_0$-\textit{radial}, i.e., 
\begin{equation}\label{u=u^*}
	u(x)=u^*\left({\sf AVR}(X)^\frac{1}{N}{\sf d}(x_0,x)\right)\ \ {\sf m}{\rm -a.e.}\ x\in X.
\end{equation}
The result of Nobili and Violo \cite[Theorem 1.3]{NV-new} can be seen as a final product of several earlier works and ideas, arising from Antonelli,  Pasqualetto,  Pozzetta and  Violo \cite{APPV}, Antonelli,  Pasqualetto,  Pozzetta and  Semola \cite{APPS, APPS-2}, Balogh and Krist\'aly \cite{BK}, Cavalletti and   Manini \cite{CMan}, Mondino and Semola \cite{MSemola}, Nobili and Violo \cite{NV-adv} and  Pasqualetto and  Rajala \cite{Pasqualetto-Rajala}. 

\section{Proof of Theorem \ref{main-theorem-unified}}\label{section-unified}


We assume that the hypotheses (i)-(iii) of Theorem \ref{main-theorem-unified} are fulfilled.

Let  $x_0\in M$ from (iii), and according to (ii), we assume that \eqref{GN-metric-space} holds, i.e., there exists ${\sf C}>0$ such that
	\begin{equation}\label{GN-metric-space-biz}
	\|u\|_{L^q(X,{\sf m})}\leq {\sf C} \|\nabla u\|_{L^p(X,{\sf m})}^\theta \|u\|_{L^r(X,{\sf m})}^{1-\theta},\ \ u\in  W_{\rm loc}^{1,p}(X,{\sf m})\cap L^q(X,{\sf m}). 
\end{equation}

For every $R>0$, we consider the function 
$$
	u_R(x)=u_0\left(\frac{{\sf d}(x_0,x)}{R}\right),\ \ x\in X,
$$
where $u_0\in  W_{\rm loc}^{1,p}(\mathbb R_+,{\sf m}_N)\cap L^q(\mathbb R_+,{\sf m}_N)$ is a non-zero, non-negative and non-increasing extremizer in  \eqref{GN-1-dimensional-model} on the 1-dimensional model metric measure cone  $(\mathbb R_+,|\cdot|,{\sf m}_{N})$, i.e., 
\begin{equation}\label{egyenloseg-GN-ben-model-bizonyitas}
	\|u_0\|_{L^q(\mathbb R_+,{\sf m}_{N})}= {\sf K}_{\rm opt} \|u_0'\|_{L^p(\mathbb R_+,{\sf m}_{N})}^\theta \|u_0\|_{L^r(\mathbb R_+,{\sf m}_{N})}^{1-\theta},
\end{equation}
and  verifying \eqref{extremal-asymptotics}; in particular, all these integrals are finite and non-zero. If supp$(u_0)=[0,R_0]$ for some $R_0>0$  (with the convention that supp$(u_0)=\mathbb R_+$ when $R_0=\infty$), then  the support of $u_R$ is the closure of the ball $B_{x_0}(R_0 R)$, with the convention that   $B_{x_0}(R_0 R)=X$ whenever $R_0=\infty.$

We are going to use $u_R$ as a test function in \eqref{GN-metric-space-biz}. To do this, we recall from (iii) that the limit 
\begin{equation}\label{limit-L-infty}
	L_N(x_0)=\lim_{r\to \infty}\frac{	{\sf m}(B_{x_0}(r))}{\omega_N r^{N}}
\end{equation}
exists. Clearly, if $L_N(x_0)=+\infty$, we have nothing to prove; thus, we assume that ${\sf AVR}(X)=L_N(x_0)<+\infty$. We divide the proof into three steps. 

{\it \underline{Step 1: Lebesgue-norm estimates}.} Let us observe that $\rho\mapsto (u_0^q)'(\rho)\rho^N$ belongs to $L^1(0,R_0)$. Indeed, since $u_0\geq 0$ is a  non-increasing, non-zero extremal function in \eqref{GN-1-dimensional-model} (thus $u_0\in L^q(\mathbb R_+,{\sf m}_{N})$) such that  $u_0(0)$ is finite and $u_0^q(\rho)\rho^N\to 0$  as $\rho\to R_0$ (both for $R_0<\infty$ and $R_0=\infty$), see \eqref{extremal-asymptotics}, a partial integration yields that 
\begin{equation}\label{1-integralhatosag}
 0<-\int_0^{R_0} (u_0^q)'(\rho)\rho^N{\rm d}\rho =N\int_0^{R_0}  u_0^q(\rho)\rho^{N-1}{\rm d}\rho=\frac{1}{\omega_N}\int_0^{R_0}  u_0^q(\rho){\rm d}{\sf m}_N(\rho) =\frac{\|u_0\|^q_{L^q(\mathbb R_+,{\sf m}_{N})}}{\omega_N}<\infty.
\end{equation}
 The latter estimate also yields that   $\rho\mapsto (u_0^q)'(\rho){\sf m}\left(B_{x_0}(R\rho)\right)$ belongs to $L^1(0,R_0)$. Indeed, if $R_0<\infty$, the claim is trivial. When $R_0=\infty$, by  $L_N(x_0)<+\infty$, there exists $r_0>0$ such that
${\sf m}(B_{x_0}(r))\leq ({\sf AVR}(X)+1)\omega_N r^N$ for every $r>r_0$; therefore, 
\begin{align}\label{r-0-construction}
	\nonumber 0&<-\int_0^{\infty} (u_0^q)'(\rho){\sf m}(B_{x_0}(R\rho)){\rm d}\rho=-\left(\int_0^{r_0/R}+\int_{r_0/R}^\infty\right) (u_0^q)'(\rho){\sf m}(B_{x_0}(R\rho)){\rm d}\rho\\&\leq {\sf m}(B_{x_0}(r_0))(u_0^q(0)-u_0^q(r_0/R))-({\sf AVR}(X)+1)\omega_N R^N\int_{r_0/R}^\infty (u_0^q)'(\rho)\rho^N{\rm d}\rho<\infty.
\end{align}
 Having these facts, by Proposition \ref{proposition-co-area} one has that 
\begin{align}\label{u_metrikus-1}
	\nonumber	\|u_R\|^q_{L^q(X,{\sf m})}&=\int_{X}u_R^q{\rm d}{\sf m}=\int_{B_{x_0}(R_0R)}u_0^q\left(\frac{{\sf d}(x_0,x)}{R}\right){\rm d}{\sf m}(x)\\& \nonumber =
	\lim_{\rho\nearrow R_0} u_0^q(\rho){\sf m}(B_{x_0}(R\rho))  -\int_0^{R_0} (u_0^q)'(\rho)\,{\sf m}(B_{x_0}(R\rho)){\rm d}\rho
	\\&= -\int_0^{R_0} (u_0^q)'(\rho){\sf m}\left(B_{x_0}(R\rho)\right){\rm d}\rho,
\end{align}
where we used that $u_0(R_0)=0$ when $R_0<\infty$, and 
$\lim_{\rho\to \infty}u_0^q(\rho){\sf m}(B_{x_0}(R\rho)) =0$ when $R_0=\infty$, by using \eqref{limit-L-infty} and the assumption \eqref{extremal-asymptotics}. 
Since $\rho\mapsto (u_0^q)'(\rho)\rho^N$ belongs to $L^1(0,R_0)$, by the Lebesgue dominated convergence theorem and relations \eqref{u_metrikus-1}, \eqref{limit-L-infty} and \eqref{1-integralhatosag}  one has that 
\begin{align}\label{limit-q}
\nonumber	\lim_{R\to \infty}\frac{\|u_R\|^q_{L^q(X,{\sf m})}}{R^N}&=-\omega_N\lim_{R\to \infty}\int_0^{R_0} (u_0^q)'(\rho)\frac{ {\sf m}\left(B_{x_0}(R\rho)\right)}{\omega_N(R\rho)^N}\rho^N{\rm d}\rho=-{\sf AVR}(X)\omega_N\int_0^{R_0} (u_0^q)'(\rho)\rho^N{\rm d}\rho\\&={\sf AVR}(X)\|u_0\|^q_{L^q(\mathbb R_+,{\sf m}_{N})}.
\end{align} 

If $\theta<1$ (thus the last term in the Gagliardo--Nirenberg--Sobolev  inequality \eqref{GN-metric-space-biz}   does not vanish), a similar argument can be performed for the parameter $r>0$ instead of $q>0$, obtaining  that 
\begin{align}\label{limit-r}
	\lim_{R\to \infty}\frac{\|u_R\|^r_{L^r(X,{\sf m})}}{R^N}={\sf AVR}(X)\|u_0\|^r_{L^r(\mathbb R_+,{\sf m}_{N})}.
\end{align} 

{\it \underline{Step 2: Cheeger-energy estimate}.} Now, we shall focus on the 'gradient' term. First, since $u_R=u_0\circ ({\sf d}(x_0,\cdot)/R)$ is locally Lipschitz,  the minimal $p$-weak upper gradient of $u_R$ can be estimated by its local Lipschitz constant, i.e., $|\nabla u_R(x)|_p\leq |{\rm lip}_{\sf d}u_R|(x)$ for ${\sf m}$-a.e.\  $x\in X$, see \eqref{Cheeger-0-ineq}. Thus, by the  chain rule for locally Lipschitz functions,  we have for ${\sf m}$-a.e.\ $x\in X$ that 
$$|\nabla u_R(x)|_p\leq |{\rm lip}_{\sf d}u_R|(x)=\frac{1}{R}|u_0'|\left(\frac{{\sf d}(x_0,x)}{R}\right)|{\rm lip}_{\sf d} {\sf d}(x_0,\cdot)|(x).$$
Since ${\sf d}(x_0,\cdot)$ is 1-Lipschitz, it follows that $|{\rm lip}_{\sf d} {\sf d}(x_0,\cdot)|(x)\leq 1$ for ${\sf m}$-a.e.\ $x\in X$. 
Therefore, 
\begin{equation}\label{grad-estim-R}
	\|\nabla u_R\|^p_{L^p(X,{\sf m})}={\sf Ch}_p(u_R)=\int_X |\nabla u_R|^p_p{\rm d}{\sf m}\leq \frac{1}{R^p}\int_X |u_0'|^p\left(\frac{{\sf d}(x_0,x)}{R}\right){\rm d}{\sf m}.
\end{equation}

Note that 
$\rho\mapsto (|u_0'|^p)'(\rho)\rho^N\in L^1(0,R_0)$. If $R_0<\infty,$ the claim is trivial. If $R_0=\infty,$ since $u_0$ is non-increasing and  $u_0''(\rho)\geq 0$ for a.e.\ $\rho\in (i_0,\infty)$, see \eqref{extremal-asymptotics}, one has that $|(|u_0'|^p)'|=-(|u_0'|^p)'$ a.e.\ on $(i_0,\infty)$; thus, since $|u_0'|^p(\rho)\rho^N\to 0$ as $\rho\to \infty,$ see \eqref{extremal-asymptotics}, and $u_0'\in L^p(\mathbb R_+,{\sf m}_{N}),$ one has
\begin{align}\label{previous-estimate}
	\nonumber \left|\int_{\mathbb R_+} (|u_0'|^p)'(\rho)\rho^N{\rm d}\rho\right|&\leq \int_0^{i_0}  |(|u_0'|^p)'(\rho)|\rho^N{\rm d}\rho-\int_{i_0}^{\infty}(|u_0'|^p)'(\rho)\rho^N{\rm d}\rho\\&=\int_0^{i_0}  |(|u_0'|^p)'(\rho)|\rho^N{\rm d}\rho+|u_0'|^p(i_0)i_0^{N}+N\int_{i_0}^{\infty}|u_0'|^p(\rho)\rho^{N-1}{\rm d}\rho<\infty.
\end{align}
In particular, the latter integrability property and assumption \eqref{extremal-asymptotics} entitle  us to obtain  that
\begin{align}\label{u_R-1-model}
	\nonumber	\frac{\|u_0'\|_{L^p(\mathbb R_+,{\sf m}_{N})}^p}{\omega_N}&= N\int_{0}^{R_0}|u_0'|^p(\rho)\rho^{N-1}{\rm d}\rho\\&=\ds\left\{ 
	\begin{array}{lll}
		|u_0'|^p(R_0)R_0^N	-\ds\int_0^{R_0} (|u_0'|^p)'(\rho)\rho^N{\rm d}\rho,\ \ \text {if} \  R_0<\infty; \\
		-\ds\int_0^{\infty} (|u_0'|^p)'(\rho)\rho^N{\rm d}\rho,\ \ \ \ \ \ \ \ \ \ \ \ \ \ \ \ \ \ \ \  \text {if} \  R_0=\infty.
	\end{array}\right.
\end{align}

%

 We claim that  
$\rho\mapsto (|u_0'|^p)'(\rho){\sf m}(B_{x_0}(R\rho))$ belongs to $L^1(0,R_0)$. If $R_0<\infty,$ the claim is trivial as $|u_0'|^p\in BV([0,R_0])$, see \eqref{extremal-asymptotics}. If $R_0=\infty,$ let $l_0=\max\{i_0,r_0/R\}$ where $r_0>0$ is from \eqref{r-0-construction}. Then, since $|u_0'|^p\in BV([0,l_0])$,  the  estimate \eqref{previous-estimate} implies that
\begin{align}\label{derivative-oo00}
	\nonumber \int_{\mathbb R_+} \left|(|u_0'|^p)'(\rho)\right|{\sf m}(B_{x_0}(R\rho)){\rm d}\rho&\leq {\sf m}(B_{x_0}(R l_0))\int_0^{l_0}  |(|u_0'|^p)'(\rho)|{\rm d}\rho\\&\ \ \ \ -({\sf AVR}(X)+1)\omega_N R^N\int_{l_0}^\infty (u_0^p)'(\rho)\rho^N{\rm d}\rho<\infty,
\end{align}
which concludes the proof of the claim.

%
%

We are in the position to apply Proposition \ref{proposition-co-area} for $h:=|u_0'|^p(\cdot/R)$ on $[0,RR_0),$ obtaining through \eqref{grad-estim-R}, \eqref{extremal-asymptotics}  and a change of variables that 
\begin{align*}\label{u_R-grad-estimate}
\nonumber	\|\nabla u_R\|^p_{L^p(X,{\sf m})}&\leq \frac{1}{R^p}\int_{B_{x_0}(R_0R)} |u_0'|^p\left(\frac{{\sf d}(x_0,x)}{R}\right){\rm d}{\sf m}\\&=\frac{1}{R^p}\ds\left\{ 
\begin{array}{lll}
|u_0'|^p(R_0){\sf m}\left(B_{x_0}(RR_0)\right)	-\ds\int_0^{R_0} (|u_0'|^p)'(\rho){\sf m}\left(B_{x_0}(R\rho)\right){\rm d}\rho,\ \ \text {if} \  R_0<\infty; \\
-\ds\int_0^{\infty} (|u_0'|^p)'(\rho){\sf m}\left(B_{x_0}(R\rho)\right){\rm d}\rho,\ \ \ \ \ \ \ \ \ \ \ \ \ \ \ \ \ \ \ \ \ \ \ \ \ \ \ \ \ \ \ \ \ \ \text {if} \  R_0=\infty.
\end{array}\right.
\end{align*}
Combining the latter estimate with the Lebesgue dominated convergence theorem and relations \eqref{limit-L-infty} and \eqref{u_R-1-model}, it follows that 
 \begin{equation}\label{limit-p}
 		\lim_{R\to \infty}\frac{\|\nabla u_R\|^p_{L^p(X,{\sf m})}}{R^{N-p}}\leq {\sf AVR}(X)\|u_0'\|^p_{L^p(\mathbb R_+,{\sf m}_{N})}.
 \end{equation} 

{\it \underline{Step 3: Blow-down limiting argument}.}
By \eqref{u_metrikus-1}, it follows that $u_R\in L^q(X,{\sf m})$ for every $R>0$. Moreover, $u_R\in  W_{\rm loc}^{1,p}(X,{\sf m})$ since for every $\eta\in {\rm Lip}_{bs}(X)$ one has that $\eta u_R\in W^{1,p}(X,{\sf m})$. Indeed, if $K\subset X$ is the bounded support of $\eta$ and $m_K:=\max_{\overline K}|\eta|$, we first have that
$$\|\eta u_R\|_{L^p(X,{\sf m})}^p=\int_K|\eta|^pu_R^p{\rm d}{\sf m}\leq m_K^pu_0^p(0){\sf m}(K)<\infty.$$
Moreover, by Gigli \cite[relation (2.13)]{Gigli}, one has ${\sf m}$-a.e.\ that $|\nabla(\eta u_R)|_p\leq |\eta||\nabla  u_R|_p+u_R|\nabla  \eta|_p\leq |\eta||\nabla  u_R|_p+L_Ku_R,$ where $L_K>0$ is the Lispchitz constant of $\eta$ on $K$; 
thus 
$$\|\nabla (\eta u_R)\|^p_{L^p(X,{\sf m})}\leq 2^p(m_K^p\|\nabla  u_R\|^p_{L^p(X,{\sf m})}+L^pu_0^p(0){\sf m}(K))<\infty.$$


Accordingly,  $u_R\in  W_{\rm loc}^{1,p}(X,{\sf m})\cap L^q(X,{\sf m})$ for every $R>0$, which can be used in \eqref{GN-metric-space-biz}
 as a test function, i.e., one has 
 $$\|u_R\|_{L^q(X,{\sf m})}\leq {\sf C} \|\nabla u_R\|_{L^p(X,{\sf m})}^\theta \|u_R\|_{L^r(X,{\sf m})}^{1-\theta}.$$ Let us divide this inequality by $R^{N/q}$ and taking into account that $\frac{N}{q}=\frac{N-p}{p}\theta+\frac{N}{r}(1-\theta)$, see the balance condition \eqref{balance-condition}, we obtain  that
 $$\frac{\|u_R\|_{L^q(X,{\sf m})}}{R^\frac{N}{q}}\leq {\sf C} \left(\frac{\|\nabla u_R\|_{L^p(X,{\sf m})}}{R^\frac{N-p}{p}}\right)^\theta \left(\frac{\|u_R\|_{L^r(X,{\sf m})}}{R^\frac{N}{r}}\right)^{1-\theta}.$$
Letting  $R\to \infty$ in the last inequality, by relations \eqref{limit-q}, \eqref{limit-r} and  \eqref{limit-p} it follows that 
$$\left({\sf AVR}(X)\|u_0\|^q_{L^q(\mathbb R_+,{\sf m}_{N})}\right)^\frac{1}{q}\leq {\sf C} \left({\sf AVR}(X)\|u_0'\|^p_{L^p(\mathbb R_+,{\sf m}_{N})}\right)^\frac{\theta}{p} \left({\sf AVR}(X)\|u_0\|^r_{L^r(\mathbb R_+,{\sf m}_{N})}\right)^\frac{{1-\theta}}{r}.$$
The equality \eqref{egyenloseg-GN-ben-model-bizonyitas} and a reorganization  of the terms in the last inequality imply that 
$${\sf K}_{\rm opt}\leq {\sf C}\cdot {\sf AVR}(X)^{\frac{\theta}{p}+\frac{{1-\theta}}{r}-\frac{1}{q}}.$$
Since $\frac{\theta}{p}+\frac{{1-\theta}}{r}-\frac{1}{q}=\frac{\theta}{N}$, the latter inequality is precisely the required relation  \eqref{volume-growth-estimate}. 
\hfill$\square$

\begin{remark}\label{remark-b}\rm (a) Beside (iii) in Theorem \ref{main-theorem-unified},  if we assume that for some $x_0\in X$ the  local density
	$L_0(x_0)=\lim_{r\to 0}\frac{	{\sf m}(B_{x_0}(r))}{\omega_N r^{N}}$
exists, a similar argument as in the above proof shows that  
\begin{equation}\label{volume-growth-estimate-in-zero}
	L_0(x_0)\geq \left(\frac{{\sf K}_{\rm opt}}{\sf C}\right)^\frac{N}{\theta}.
\end{equation}
In particular, if $(X,{\sf d},{\sf m})=(M,{\sf d}_g,{\rm d}v_g)$ is an $n$-dimensional Riemannian manifold, then for every $x_0\in X$ one has that $L_0(x_0)=1$, see Gallot,  Hulin and Lafontaine \cite[Theorem 3.98]{GHL}. Therefore, by \eqref{volume-growth-estimate-in-zero} we obtain that ${\sf C}\geq {\sf K}_{\rm opt}$. Such conclusions have been obtained by using local charts, see Aubin \cite{Aubin} and Hebey \cite[Chapter 4]{Hebey}. 

(b) For simplicity, let $\theta=1$ in Theorem \ref{main-theorem-unified} (thus $q=Np/(N-p)$). If the limit \eqref{Limit-exists-N} in Theorem \ref{main-theorem-unified} does not exist, we consider
	$$l=\liminf_{r\to \infty}\frac{	{\sf m}(B_{x_0}(r))}{\omega_N r^{N}}\ \ {\rm and}\ \ L=\limsup_{r\to \infty}\frac{	{\sf m}(B_{x_0}(r))}{\omega_N r^{N}}.
	$$
Fatou's lemma and a similar blow-down argument as in the  proof Theorem \ref{main-theorem-unified} imply
	that
			\begin{equation}\label{L-l}
					L\left(\frac{L}{l}\right)^\frac{N(p-1)+1}{q(p-1)}\geq  \left(\frac{{\sf K}_{\rm opt}}{\sf C}\right)^N.
				\end{equation}
			As expected,   \eqref{L-l} reduces  to \eqref{volume-growth-estimate} whenever $L=l$. 

\end{remark}

%


\section{Proof of Theorem \ref{main-theorem-CD}} \label{section-4}

We divide the proof into three parts. 

\textit{\underline{Step 1: proof of \eqref{GN-CD}}.} Let us fix $u\in W_{\rm loc}^{1,p}(X,{\sf m})\cap L^q(X,{\sf m})$; in order to prove \eqref{GN-CD}, without loss of generality, we may assume that $u$ is non-negative. Let $u^*$ be the non-increasing rearrangement of $u$, defined in \S \ref{section-rearrangement}; in particular, $u^*$ verifies the Gagliardo--Nirenberg--Sobolev  inequality \eqref{GN-1-dimensional-model}. Therefore, combining the Cavalieri principle \eqref{Cavallieri} and P\'olya--Szeg\H o inequality   \eqref{Polya-Szego}, it follows  by \eqref{GN-1-dimensional-model} that 
\begin{align}\label{estimate-GN-final}
	\nonumber 	\|u\|_{L^q(X,{\sf m})}&=\|u^*\|_{L^q(\mathbb R_+,{ \sf m}_N)}\\& \leq  {\sf K}_{\rm opt} \| (u^*)'\|_{L^p(\mathbb R_+,{ \sf m}_N)}^{\theta} \|u^*\|_{L^r(\mathbb R_+,{ \sf m}_N)}^{1-\theta}\\& \label{estimate-GN-2-final}
	\leq {\sf K}_{\rm opt}{\sf AVR}(X)^{-\frac{\theta}{N}} \|\nabla u\|_{L^p(X,{\sf m})}^\theta \|u\|_{L^r(X,{\sf m})}^{1-\theta},
\end{align}
which is precisely inequality  \eqref{GN-CD}. 

\textit{\underline{Step 2: sharpness of \eqref{GN-CD}}.} We assume by contradiction that there exists ${\sf C}>0$ such that ${\sf C}<{\sf K}_{\rm opt}{\sf AVR}(X)^{-\frac{\theta}{N}}$ and 
\begin{equation}\label{GN-CD-modified}
	\|u\|_{L^q(X,{\sf m})}\leq {\sf C}\|\nabla u\|_{L^p(X,{\sf m})}^\theta \|u\|_{L^r(X,{\sf m})}^{1-\theta},\ \ \forall u\in W_{\rm loc}^{1,p}(X,{\sf m})\cap L^q(X,{\sf m}). 
\end{equation}
Since $(X,{\sf d},{\sf m})$ is a  ${\sf CD}(0,N)$ metric measure space,  the limit $L_N(x_0)$ in \eqref{Limit-exists-N} exists and 
\begin{equation}\label{L_N-CD-letezik}
	L_N(x_0)={\sf AVR}(X),\ \ \forall x_0\in X.
\end{equation}
Therefore, by the latter relation and Theorem \ref{main-theorem-unified},  for every $x_0\in X$ we have
$$ {\sf AVR}(X)=	L_N(x_0)\geq \left(\frac{{\sf K}_{\rm opt}}{\sf C}\right)^\frac{N}{\theta},$$
which contradicts our assumption ${\sf C}<{\sf K}_{\rm opt}{\sf AVR}(X)^{-\frac{\theta}{N}}$. 

\textit{\underline{Step 3: equality in \eqref{GN-CD}}.}

(i)$\implies$(ii). Assume that equality holds in \eqref{GN-CD} for some non-zero  $u\in W_{\rm loc}^{1,p}(X,{\sf m})\cap L^q(X,{\sf m})$. In particular, we have equalities in \eqref{estimate-GN-final} and \eqref{estimate-GN-2-final}, respectively. On the one hand, the equality in \eqref{estimate-GN-final} implies that $u^*$ is an extremizer in the Gagliardo--Nirenberg--Sobolev  inequality \eqref{GN-1-dimensional-model}. Since -- by assumption -- the extremal function $u_0$ in  \eqref{GN-1-dimensional-model}  is unique $($up to scaling and multiplicative factor$)$, we have that $u^*=u_0.$  On the other hand, since $(X,{\sf d}, {\sf m})$ is an essentially non-branching ${\sf CD}(0,N)$ space, the equality in \eqref{estimate-GN-2-final} implies the equality in the P\'olya--Szeg\H o inequality \eqref{Polya-Szego}, thus $X$ is an $N$-volume cone with tip $x_0$ for some $x_0\in X$, see \eqref{volume-cone}. Moreover, since by assumption $(u^*)'\neq 0$ a.e.\ on the set $\{{\rm ess\, inf}\, u<u^*<{\rm ess\, sup}\, u\},$ then $u$ is $x_0$-radial, see \eqref{u=u^*}, i.e., $	u(x)=u^*\left({\sf AVR}(X)^\frac{1}{N}{\sf d}(x_0,x)\right)$ for $ {\sf m}{\rm -a.e.}\ x\in X.$ Since  {\rm supp}$(u_0)=[0,R_0]$ $($with  {\rm supp}$(u_0)=\mathbb R_+$ whenever $R_0=\infty$$)$, the latter relation is valid in fact for  $ {\sf m}{\rm -a.e.}\ x\in B_{x_0}(R_0/{\sf AVR}(X)^{\frac{1}{N}})$.

(ii)$\implies$(i). We assume that $X$ is an $N$-volume cone with a tip $x_0\in X$ and up to scaling and multiplicative factor,  $$u(x)=u_0\left({\sf AVR}(X)^{\frac{1}{N}}{\sf d}(x_0,x)\right),\ \ x\in B_{x_0}(R_0/{\sf AVR}(X)^{\frac{1}{N}}),$$ where $u_0\geq 0$ is an extremal $C^1$-function  in \eqref{GN-1-dimensional-model} verifying \eqref{extremal-asymptotics}, and  {\rm supp}$(u_0)=[0,R_0]$ $($with  {\rm supp}$(u_0)=\mathbb R_+$ whenever $R_0=\infty$$)$. 
By Proposition \ref{proposition-co-area} and the fact that 
${\sf m}(B_{x_0}(r))={\sf AVR}(X){\omega_Nr^N}$ for all $r>0,$ see \eqref{volume-cone}, 
a similar computation as in \eqref{u_metrikus-1} shows that 
\begin{align}\label{u_metrikus-2}
	\nonumber	\|u\|^q_{L^q(X,{\sf m})}&=\int_{B_{x_0}(R_0/{\sf AVR}(X)^{\frac{1}{N}})}u_0^q\left({\sf AVR}(X)^{\frac{1}{N}}{\sf d}(x_0,x)\right){\rm d}{\sf m}(x)\\& \nonumber =
	\lim_{\rho\nearrow R_0} u_0^q(\rho){\sf m}(B_{x_0}({\sf AVR}(X)^{-\frac{1}{N}}\rho))  -\int_0^{R_0} (u_0^q)'(\rho)\,{\sf m}(B_{x_0}({\sf AVR}(X)^{-\frac{1}{N}}\rho)){\rm d}\rho
	\\&\nonumber = -\int_0^{R_0} (u_0^q)'(\rho){\sf m}\left(B_{x_0}({\sf AVR}(X)^{-\frac{1}{N}}\rho)\right){\rm d}\rho= -\omega_N\int_0^{R_0} (u_0^q)'(\rho)\rho^N{\rm d}\rho\\& = N\omega_N\int_0^{R_0} u_0^q(\rho)\rho^{N-1}{\rm d}\rho= \|u_0\|^q_{L^q(\mathbb R_+,{\sf m}_{N})}.
\end{align}
If $\theta<1$, in a similar manner as before, we also have that 
\begin{equation}\label{r-re-vonatkozo-egyenloseg}
	\|u\|_{L^r(X,{\sf m})}=\|u_0\|_{L^r(\mathbb R_+,{\sf m}_{N})}.
\end{equation}

By  \eqref{Cheeger-equality} and the eikonal equation \eqref{eikonal}, the  chain rule for locally Lipschitz functions provides 
$$|\nabla u |=|{\rm lip}_{\sf d}u|=-{\sf AVR}(X)^{\frac{1}{N}}u_0'\left({\sf AVR}(X)^{\frac{1}{N}}{\sf d}(x_0,\cdot )\right)\ \ {\sf m}{\rm -a.e.\ on}\  X.$$
Therefore, by Proposition \ref{proposition-co-area} and ${\sf m}(B_{x_0}(r))={\sf AVR}(X){\omega_Nr^N}$ for all $r>0,$ it yields that
\begin{align}\label{gradiens-N-cone}
	\nonumber	\|\nabla u\|^p_{L^p(X,{\sf m})}&=\int_X |\nabla u|^p{\rm d}{\sf m}={\sf AVR}(X)^{\frac{p}{N}}\int_{B_{x_0}(R_0/{\sf AVR}(X)^{\frac{1}{N}})} |u_0'|^p\left({\sf AVR}(X)^{\frac{1}{N}}{\sf d}(x_0,x )\right){\rm d}{\sf m}(x)\\&\nonumber=\omega_N{\sf AVR}(X)^{\frac{p}{N}}\ds\left\{ 
	\begin{array}{lll}
		|u_0'|^p(R_0)R_0^N	-\ds\int_0^{R_0} (|u_0'|^p)'(\rho)\rho^N{\rm d}\rho,\ \ \text {if} \  R_0<\infty; \\
		-\ds\int_0^{\infty} (|u_0'|^p)'(\rho)\rho^N{\rm d}\rho,\ \ \ \ \ \ \ \ \ \   \ \ \ \ \ \ \ \ \ \  \text {if} \  R_0=\infty;
	\end{array}\right.
	\\&=N\omega_N{\sf AVR}(X)^{\frac{p}{N}}\ds \int_0^{R_0} |u_0'|^p(\rho)\rho^{N-1}{\rm d}\rho={\sf AVR}(X)^{\frac{p}{N}}\|u_0'\|^p_{L^p(\mathbb R_+,{\sf m}_{N})}.
\end{align}
Since $u_0$ is an extremizer in the Gagliardo--Nirenberg--Sobolev inequality \eqref{GN-1-dimensional-model} with the optimal constant ${\sf K}_{\rm opt}$, relations \eqref{u_metrikus-2}, \eqref{r-re-vonatkozo-egyenloseg} and \eqref{gradiens-N-cone} imply that $u$ verifies the equality in \eqref{GN-CD}. \hfill $\square$

\begin{remark}\rm \label{remark-particularize} (a)
	Theorem \ref{main-theorem-CD} -- and its particular/borderline forms in the next subsections can be particularized to Riemannian and Finsler manifolds with non-negative Ricci curvature, see   Sturm \cite{Sturm-2} for Riemannian manifolds and Ohta \cite{Ohta} for Finsler manifolds, respectively. 
	
	(b)	If we particularize Theorem \ref{main-theorem-CD} to ${\sf RCD}(0,N)$ spaces, i.e., ${\sf CD}(0,N)$ spaces with infinitesimally Hilbertian structure, see Gigli \cite{Gigli}, it turns out that $N$-volume cones become \textit{metric} cones, see  De Philippis and Gigli \cite[Theorem 3.39]{DeP-Gigli}, i.e., \eqref{volume-cone} implies that there exists an
	${\sf RCD}(N-2, N-1)$ space $(Z, {\sf d}_Z, {\sf m}_Z)$ with diam$(Z) \leq \pi$ such that the ball $B_{x_0}(R)\subset X$
	is isometric to the ball $B_{O_Y}(R)$ of the cone $Y$ built over $Z$ for every $R>0.$ Clearly, $n$-dimensional  Riemannian manifolds with non-negative Ricci curvature fall into this class, and subsequently, $"X$ is an $n$-volume cone" is understood as $"X$ is isometric to the Euclidean space $\mathbb R^n$". In the class of Finsler manifolds, similar characterization is not available; see e.g.\  Shen \cite{Shen-1, Shen-2}.
\end{remark}

\section{Particular cases of Gagliardo--Nirenberg--Sobolev inequalities} \label{section-5}
In this section we show the applicability of Theorems \ref{main-theorem-unified} and \ref{main-theorem-CD};  in \S\ref{subsection-GN} we focus on  inequalities having Barenblatt profiles as extremizers in the $1$-dimensional model cone  $(\mathbb R_+,|\cdot|,{\sf m}_{N}),$ while 
 in \S\ref{subsection-Nash}, we deal with inequalities having non-Barenblatt profiles.  

 
\subsection{Inequalities with Barenblatt-type extremizers}\label{subsection-GN} Several inequalities can be included into this class; we discuss in detail a Gag\-liardo--Ni\-ren\-berg--Sobolev interpolation inequality, while for the others we only indicated the necessary ingredients. 

\subsubsection{Gag\-liardo--Ni\-ren\-berg--Sobolev interpolation inequality I} 
Let $N>1$, $p\in (1,N)$, $p^\star=\frac{pN}{N-p}$, $p'=\frac{p}{p-1}$ the conjugate of $p$,   $\alpha\in (1,\frac{N}{N-p}]$ and consider the numbers 
\begin{equation}\label{theta-best}
	\theta=\frac{p^\star(\alpha-1)}{\alpha p(p^\star-\alpha
		p+\alpha-1)},
\end{equation}
and  
\begin{equation}\label{GN-best-constant-1}
	\mathcal
	G_{\alpha,p,N}=\left(\frac{\alpha-1}{p'}\right)^\theta
	\frac{\left(\frac{p'}{N}\right)^{\frac{\theta}{p}+\frac{\theta}{N}}\left(\frac{\alpha (p-1)+1}{\alpha
			-1}-\frac{N}{p'}\right)^\frac{1}{\alpha p}
		\left(\frac{\alpha (p-1)+1}{\alpha
			-1}\right)^{\frac{\theta}{p}-\frac{1}{\alpha p}}}{\left(\omega_N {\sf B}\left(\frac{\alpha (p-1)+1}{\alpha
			-1}-\frac{N}{p'},\frac{N}{p'}\right)\right)^{\frac{\theta}{N}}},
\end{equation}	where ${\sf B}$ stands for the Beta-function. One can easily show that $\theta\in (0,1]$. 

\begin{theorem}\label{GN-1-theorem}   Let $N>1$, $p\in (1,N)$ and  $\alpha\in (1,\frac{N}{N-p}]$, let  $(X,{\sf d},{\sf m})$ be a metric measure space 	
	supporting the  Gagliardo--Nirenberg--Sobolev  inequality  
	\begin{equation}\label{GN-1-metric-space}
		\|u\|_{L^{\alpha p}(X,{\sf m})}\leq {\sf C}
		\|\nabla u\|_{L^p(X,{\sf m})}^{\theta}\|u\|_{L^{\alpha(p-1)+1}(X,{\sf m})}^{1-\theta},\
		\forall  u\in  W_{\rm loc}^{1,p}(X,{\sf m})\cap L^{\alpha p}(X,{\sf m}),
	\end{equation}
	for some ${\sf C}>0$	with $\theta\in (0,1]$ from \eqref{theta-best}, 
	and assume that for some $x_0\in X$ the  limit $L_N(x_0)$
	exists in \eqref{Limit-exists-N}. 
	Then, 
	\begin{equation}\label{GN-2-volume-growth-estimate}
		{\sf AVR}(X)\geq \left(\frac{\mathcal
			G_{\alpha,p,N}}{{\sf C}}\right)^\frac{N}{\theta}.
	\end{equation}
\end{theorem}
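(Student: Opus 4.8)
The plan is to deduce \eqref{GN-2-volume-growth-estimate} directly from Theorem~\ref{main-theorem-unified}, so the bulk of the work is to verify its hypotheses (i)--(iii) for the present parameters. I would set $q=\alpha p$ and $r=\alpha(p-1)+1$, for which hypotheses (ii) and (iii) coincide verbatim with the standing assumptions of the theorem. To confirm the balance condition \eqref{balance-condition} I would first record that $\frac{1}{p}-\frac{1}{N}=\frac{1}{p^\star}$, so that \eqref{balance-condition} reads $\frac{1}{\alpha p}=\frac{\theta}{p^\star}+\frac{1-\theta}{r}$. Since $r-\alpha p=1-\alpha$ and $p^\star-r=p^\star-\alpha p+\alpha-1$, solving this single scalar equation for $\theta$ returns precisely the value \eqref{theta-best}; the same computation shows $\theta\in(0,1]$ on the admissible range $\alpha\in(1,\frac{N}{N-p}]$.

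The crucial hypothesis (i) is the sharp one-dimensional model inequality \eqref{GN-1-dimensional-model} on $(\mathbb{R}_+,|\cdot|,{\sf m}_N)$ with optimal constant ${\sf K}_{\rm opt}=\mathcal{G}_{\alpha,p,N}$. For $\alpha\in(1,\frac{N}{N-p}]$ this is the radial form of the classical $L^p$-Gagliardo--Nirenberg interpolation inequality whose sharp constant and extremizers are known; I would invoke that result, noting that the weighted one-dimensional statement on ${\sf m}_N$ is exactly the radialization of the Euclidean inequality, valid for real $N>1$. The extremizer is the Barenblatt-type profile
$$u_0(\rho)=\bigl(1+\rho^{p'}\bigr)^{-\frac{1}{\alpha-1}},\qquad \rho\ge 0,$$
which is non-negative, $C^1$, strictly decreasing, and has $\mathrm{supp}(u_0)=\mathbb{R}_+$, so here $R_0=\infty$; at the endpoint $\alpha=\frac{N}{N-p}$ it reduces to the Aubin--Talenti profile of the Sobolev inequality. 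Matching $u_0$ to the normalization in \eqref{GN-best-constant-1} uses the substitution $t=\rho^{p'}$, under which $\int_0^\infty\rho^{N-1}(1+\rho^{p'})^{-s}\,{\rm d}\rho=\frac{1}{p'}{\sf B}\bigl(\frac{N}{p'},s-\frac{N}{p'}\bigr)$, producing exactly the Beta-function factor in \eqref{GN-best-constant-1} with $s=\frac{\alpha(p-1)+1}{\alpha-1}$; this identification of $\mathcal{G}_{\alpha,p,N}$ with the optimal constant is the most computational point of the argument.

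It remains to check the structural requirements \eqref{extremal-asymptotics} for $u_0$. Since $u_0(\rho)\sim\rho^{-p'/(\alpha-1)}$ as $\rho\to\infty$, the profile is eventually convex, so $u_0''\ge 0$ for $\rho$ beyond some $i_0>0$ and $|u_0'|^p$ is locally of bounded variation. The finiteness of $\|u_0\|_{L^q}$, $\|u_0\|_{L^r}$ and $\|u_0'\|_{L^p}$ with respect to ${\sf m}_N$ forces the corresponding power-law decay exponents to exceed $N$, whence $u_0^q(\rho)\rho^N$, $|u_0'|^p(\rho)\rho^N$ and $u_0^r(\rho)\rho^N$ all tend to $0$ as $\rho\to\infty$, giving the last line of \eqref{extremal-asymptotics}. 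With (i)--(iii) in place, Theorem~\ref{main-theorem-unified} yields ${\sf AVR}(X)\ge\bigl({\sf K}_{\rm opt}/{\sf C}\bigr)^{N/\theta}=\bigl(\mathcal{G}_{\alpha,p,N}/{\sf C}\bigr)^{N/\theta}$, which is \eqref{GN-2-volume-growth-estimate}. The main obstacle is the precise identification of $\mathcal{G}_{\alpha,p,N}$ as the optimal constant of the radial model inequality together with the careful confirmation of \eqref{extremal-asymptotics}; once these are settled, the volume-growth conclusion is an immediate consequence of the unified theorem.
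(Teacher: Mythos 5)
Your proposal is correct and follows essentially the same route as the paper: choosing $q=\alpha p$, $r=\alpha(p-1)+1$ to match the balance condition \eqref{balance-condition}, invoking the sharp one-dimensional model inequality \eqref{gn-elso} with the Barenblatt extremizer $u_0(t)=(1+t^{p'})^{\frac{1}{1-\alpha}}$, verifying \eqref{extremal-asymptotics}, and then applying Theorem \ref{main-theorem-unified}. The only difference is that you spell out details the paper delegates to citations (the Beta-function identification of $\mathcal G_{\alpha,p,N}$ and the power-law decay checks, which indeed reduce to $p<N$ and $\alpha\le\frac{N}{N-p}$); note only that for non-integer $N>1$ the model inequality is not literally a radialization of a Euclidean statement but is established directly in the weighted setting by the references the paper cites (Lam; Balogh--Don--Krist\'aly).
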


\begin{proof}
	In Theorem \ref{main-theorem-unified}, we choose $q=\alpha p$ and $r=\alpha(p-1)+1$. With these choices and $\theta$ from \eqref{theta-best} we have the balance condition \eqref{balance-condition}. Moreover, according to Lam \cite{Lam},  Balogh, Don and Krist\'aly \cite[Theorem 3.1/(i)]{BDK} or Cordero-Erausquin, Nazaret and Villani \cite{CENV} and  Del Pino and Dolbeault \cite{delPino-JMPA} (for $N=n\in \mathbb N$), one has the sharp weighted Gagliardo--Nirenberg--Sobolev inequality 
	\begin{equation}\label{gn-elso}
		\|u\|_{L^{\alpha p}(\mathbb R_+,{ \sf m}_N)}\leq\mathcal G_{\alpha,p,N}
		\|u'\|_{L^p(\mathbb R_+,{ \sf m}_N)}^{\theta}\|u\|_{L^{\alpha(p-1)+1}(\mathbb R_+,{ \sf m}_N)}^{1-\theta},\
		\forall  u\in  W_{\rm loc}^{1,p}(\mathbb R_+,{ \sf m}_N)\cap L^{\alpha p}(\mathbb R_+,{ \sf m}_N),
	\end{equation}
	where ${\sf m}_N=N\omega_Nr^{N-1}\mathcal L^1$, while the unique extremal is -- up to scaling and multiplicative factor --  the Barenblatt function $$u_0(t)=(1+t^{p'})^\frac{1}{1-\alpha},\ t\geq 0.$$
	In particular, supp$(u_0)=[0,\infty)$, thus we may choose $R_0=\infty$ in Theorem \ref{main-theorem-unified}. Since $p\in (1,N)$ and  $\alpha\in (1,\frac{N}{N-p}]$, it is easy to verify the assumptions from \eqref{extremal-asymptotics} by choosing $i_0=\left(\frac{\alpha-1}{\alpha(p-1)+1}\right)^{1/p'}$.	 It remains to apply  Theorem \ref{main-theorem-unified}.
\end{proof}

\begin{remark}\rm \label{remark-Sobolev} (a) Theorem \ref{GN-1-theorem} is well-known on ${\sf CD}(0,N)$ spaces by Krist\'aly \cite{Kristaly-Calculus-2016},  and on Riemannian manifolds with non-negative Ricci curvature by Xia \cite{Xia-JFA}; these proofs used the explicit Barenblatt profile of the extremizer $u_0.$

	(b)
	When $\theta=1$, which is equivalent to $\alpha=\frac{N}{N-p}$, the Gagliardo--Nirenberg--Sobolev inequality \eqref{GN-1-metric-space} reduces to the well-known Sobolev inequality: 
	\begin{equation}\label{Sob-1-metric-space}
		\|u\|_{L^{p^\star}(X,{\sf m})}\leq {\sf C}
		\|\nabla u\|_{L^p(X,{\sf m})},\
		\forall  u\in  W_{\rm loc}^{1,p}(X,{\sf m})\cap L^{ p^\star}(X,{\sf m}).
	\end{equation}
	Thus, Theorem \ref{main-theorem-unified} applies, obtaining that whenever \eqref{Sob-1-metric-space} holds and  $L_N(x_0)$
	exists in \eqref{Limit-exists-N},  then $${\sf AVR}(X)\geq \left(\frac{AT(p,N)}{{\sf C}}\right)^{N},$$
	where $AT(p,N)=\mathcal
	G_{\frac{N}{N-p},p,N}$ is the optimal Aubin--Talenti constant. 
	In the particular case, when $(X,{\sf d},{\sf m})=(M,{\sf d}_g,{\rm d}v_g)$ is an $n$-dimensional Riemannian manifold, the latter result  has been established by Carron \cite{Carron}. 
\end{remark}

A direct consequence of Theorem \ref{main-theorem-CD}, combined with the  proof of Theorem \ref{GN-1-theorem}, yields: 

	\begin{theorem}\label{GN-1-theorem-CD}   Let $N>1$, $p\in (1,N)$ and  $\alpha\in (1,\frac{N}{N-p}]$,  and $(X,{\sf d},{\sf m})$ be an essentially non-branching  ${\sf CD}(0,N)$ metric measure space with ${\sf AVR}(X) >0.$ Then one has the following  Gagliardo--Nirenberg--Sobolev  inequality  
	\begin{equation}\label{GN-1-metric-space-CD}
		\|u\|_{L^{\alpha p}(X,{\sf m})}\leq {\sf AVR}(X)^{-\frac{\theta}{N}}\mathcal
		G_{\alpha,p,N}
		\|\nabla u\|_{L^p(X,{\sf m})}^{\theta}\|u\|_{L^{\alpha(p-1)+1}(X,{\sf m})}^{1-\theta},\
		\forall  u\in  W_{\rm loc}^{1,p}(X,{\sf m})\cap L^{\alpha p}(X,{\sf m}),
	\end{equation}
	and ${\sf AVR}(X)^{-\frac{\theta}{N}}\mathcal
	G_{\alpha,p,N}$ is sharp.
	
	Moreover, equality holds in \eqref{GN-1-metric-space-CD} for some non-zero  $u\in  W_{\rm loc}^{1,p}(X,{\sf m})\cap L^{\alpha p}(X,{\sf m})$ 
	if and only if 
	$X$ is an $N$-volume cone with a tip $x_0\in X$ and up to scaling and multiplicative factor,  $u(x)=(1+{\sf d}^{p'}(x_0,x))^\frac{1}{1-\alpha}$ for ${\sf m}$-a.e.\ $x\in X$. 
\end{theorem}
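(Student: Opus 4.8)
The plan is to obtain Theorem~\ref{GN-1-theorem-CD} as a direct specialization of the general Theorem~\ref{main-theorem-CD}, reusing the ingredients already assembled in the proof of Theorem~\ref{GN-1-theorem}. First I would set $q=\alpha p$ and $r=\alpha(p-1)+1$, and take $\theta$ from \eqref{theta-best}; as noted in the proof of Theorem~\ref{GN-1-theorem}, these choices satisfy the balance condition \eqref{balance-condition}. The sharp weighted one-dimensional inequality \eqref{gn-elso} on $(\mathbb R_+,|\cdot|,{\sf m}_N)$ holds with optimal constant $\mathcal G_{\alpha,p,N}$, and its unique extremizer (up to scaling and multiplicative factor) is the Barenblatt profile $u_0(t)=(1+t^{p'})^{1/(1-\alpha)}$, which is non-zero, non-increasing, $C^1$, non-negative, with ${\rm supp}(u_0)=\mathbb R_+$ (so $R_0=\infty$). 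I would recall that the asymptotic conditions \eqref{extremal-asymptotics} were already verified in the proof of Theorem~\ref{GN-1-theorem} using $i_0=\left(\frac{\alpha-1}{\alpha(p-1)+1}\right)^{1/p'}$. Thus hypothesis~(i) of Theorem~\ref{main-theorem-CD} is met.

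With the model-space data in place, applying Theorem~\ref{main-theorem-CD} directly yields inequality \eqref{GN-1-metric-space-CD} with constant ${\sf AVR}(X)^{-\theta/N}\mathcal G_{\alpha,p,N}$, together with its optimality, since $(X,{\sf d},{\sf m})$ is an essentially non-branching ${\sf CD}(0,N)$ space with ${\sf AVR}(X)>0$. This reproduces the first assertion of the theorem without any further computation.

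For the equality statement I would invoke the second half of Theorem~\ref{main-theorem-CD}, which requires two extra hypotheses: uniqueness of the extremizer $u_0$ (up to scaling and multiplicative factor) and $u_0'\neq 0$ a.e.\ on ${\rm supp}(u_0)$. Uniqueness is exactly the content of the sharp inequality \eqref{gn-elso} as recorded in the proof of Theorem~\ref{GN-1-theorem}. The condition $u_0'\neq 0$ a.e.\ is immediate from the explicit Barenblatt form: differentiating $u_0(t)=(1+t^{p'})^{1/(1-\alpha)}$ gives $u_0'(t)=\frac{p'}{1-\alpha}\,t^{p'-1}(1+t^{p'})^{\frac{1}{1-\alpha}-1}$, which vanishes only at $t=0$ (as $\alpha>1$ and $p'>1$), hence a.e.\ non-zero on $[0,\infty)$. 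Theorem~\ref{main-theorem-CD} then gives the equivalence: equality holds for some non-zero $u$ if and only if $X$ is an $N$-volume cone with tip $x_0$ and $u(x)=u_0\!\left({\sf AVR}(X)^{1/N}{\sf d}(x_0,x)\right)$ up to scaling and multiplicative factor. Substituting the Barenblatt profile $u_0$ and absorbing the factor ${\sf AVR}(X)^{1/N}$ into the scaling normalization, this extremal takes the stated form $u(x)=(1+{\sf d}^{p'}(x_0,x))^{1/(1-\alpha)}$ for ${\sf m}$-a.e.\ $x\in X$.

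I do not expect a genuine obstacle here: the result is designed as a corollary, and the only substantive checks are the verification of \eqref{extremal-asymptotics} and the two extra hypotheses for the equality case, all of which follow from the explicit Barenblatt profile and the known uniqueness. The one point requiring a little care is the scaling normalization in the equality case, namely confirming that the dilation parameter ${\sf AVR}(X)^{1/N}$ and any multiplicative constant can be folded into the ``up to scaling'' clause so that the extremizer is reported cleanly as $(1+{\sf d}^{p'}(x_0,x))^{1/(1-\alpha)}$, and that the domain $B_{x_0}(R_0/{\sf AVR}(X)^{1/N})$ from Theorem~\ref{main-theorem-CD} becomes all of $X$ since $R_0=\infty$.
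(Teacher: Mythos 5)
Your proposal is correct and matches the paper's own treatment exactly: the paper states Theorem \ref{GN-1-theorem-CD} as a direct consequence of Theorem \ref{main-theorem-CD} combined with the verifications already performed in the proof of Theorem \ref{GN-1-theorem} (balance condition with $q=\alpha p$, $r=\alpha(p-1)+1$, the sharp one-dimensional inequality \eqref{gn-elso} with its unique Barenblatt extremizer, and the conditions \eqref{extremal-asymptotics}). Your additional explicit checks -- that $u_0'\neq 0$ a.e.\ on ${\rm supp}(u_0)$, that $R_0=\infty$ makes the ball $B_{x_0}(R_0/{\sf AVR}(X)^{1/N})$ all of $X$, and that the dilation ${\sf AVR}(X)^{1/N}$ is absorbed into the scaling clause -- are exactly the details the paper leaves implicit, and they are all correct.
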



\subsubsection{Gag\-liardo--Ni\-ren\-berg--Sobolev interpolation inequality II} \label{subsubsec-5-1-2}
Let us fix $N>1$, $p\in (1,N)$ and 
	$0<\alpha<1$. Let $q=\alpha(p-1)+1$, $r=\alpha p$ and $\theta=\gamma=\frac{p^\star(1-\alpha)}{(p^\star-\alpha
				p)(\alpha p+1-\alpha)}$; it is clear that these numbers verify the balance condition \eqref{balance-condition}.  Due  to Lam \cite{Lam},  Balogh, Don and Krist\'aly \cite[Theorem 3.1/(ii)]{BDK} or Cordero-Erausquin, Nazaret and Villani \cite{CENV} (for $N=n\in \mathbb N$), the following sharp weighted Gagliardo--Nirenberg--Sobolev inequality holds: 
	\begin{equation}\label{gn-masodik}
		\|u\|_{L^{\alpha(p-1)+1}(\mathbb R_+,{ \sf m}_N)}\leq\mathcal N_{\alpha,p,N}
		\|u'\|_{L^p(\mathbb R_+,{ \sf m}_N)}^{\gamma}\|u\|_{L^{\alpha p}(\mathbb R_+,{ \sf m}_N)}^{1-\gamma},\
		\forall  u\in  W_{\rm loc}^{1,p}(\mathbb R_+,{ \sf m}_N)\cap L^{\alpha(p-1)+1}(\mathbb R_+,{ \sf m}_N),
	\end{equation}
	where the optimal constant $\mathcal N_{\alpha,p,N}$ can be obtained by replacing the Barenblatt function   
	$u_0(t)=(1-t^{p'})_+^\frac{1}{1-\alpha},\ t\geq 0,$ into \eqref{gn-masodik}. Note that $u_0$ is an  
	extremal of \eqref{gn-masodik},  supp$(u_0)=[0,1]$ and we may choose $R_0=1$; moreover, $u_0\geq 0$ and  $u_0'\leq 0$ on $[0,1]$. It remains to apply  Theorems \ref{main-theorem-unified} and \ref{main-theorem-CD} in order to state similar results as Theorems \ref{GN-1-theorem} and \ref{GN-1-theorem-CD}, respectively.

\subsubsection{Faber--Krahn  inequality I}
If $\alpha\to 0$ in the Gagliardo--Nirenberg--Sobolev inequality \eqref{gn-masodik}, we obtain the $L^1$-Faber--Krahn inequality in the 1-dimensional model cone $(\mathbb R_+,|\cdot|,{ \sf m}_N)$:
\begin{equation}\label{Faber-Krahn-1-dim-elso}
	\|u\|_{L^1(\mathbb R_+,{ \sf m}_N)}\leq \mathcal F_{p,N}\|u'  \|_{L^{p}(\mathbb R_+,{ \sf m}_N)}{\sf m}_N({\rm supp}(u))^{1-\frac{1}{p^\star}}, \ \forall u\in  W_{\rm loc}^{1,p}(\mathbb R_+,{ \sf m}_N)\cap L^1(\mathbb R_+,{ \sf m}_N),
\end{equation}
where $\mathcal F_{p,N}=N^{-\frac{1}{p}}\omega_N^{-\frac{1}{N}}(p'+N)^{-\frac{1}{p'}}$ can be obtained by using the Barenblatt function  
$	u_0(t)=(1-t^{p'})_+,\ t\geq 0,
$
which is an extremizer in \eqref{Faber-Krahn-1-dim-elso}, see 
Balogh, Don and Krist\'aly \cite[Theorem 3.3]{BDK} or Cordero-Erausquin, Nazaret and Villani \cite[p.\ 320]{CENV} (for $N=n\in \mathbb N$). Here, supp$(u)$ stands for the support of $u$, and a direct calculations show that the latter term in \eqref{gn-masodik} reduces to 
\begin{align*}
	\lim_{\alpha\to 0}\|u\|_{L^{\alpha p}(\mathbb R_+,{ \sf m}_N)}^{1-\gamma}
	={\sf m}_N({\rm supp}(u))^{1-\frac{1}{p^\star}},
\end{align*}
where $\gamma$ is from \S \ref{subsubsec-5-1-2}. Now, we can prove similar results to Theorems \ref{GN-1-theorem} and \ref{GN-1-theorem-CD}, respectively.
%
%

\subsubsection{Morrey--Sobolev inequality} Let $p > N >1$ and let us choose $q=\infty$ and $r\to 0$ in \eqref{GN-1-dimensional-model}, which reduces to the $L^\infty$-Morrey--Sobolev inequality 
\begin{equation}\label{Talenti-2}
	\| u\|_{L^\infty(\mathbb R_+)} \leq\,{\sf K}_{\rm opt} \|  u'\|_{L^p(\mathbb R_+,{\sf m}_N)}{\sf m}_N({\rm supp}\, u)^{\frac{1}{N}-\frac{1}{p}}  ,\ \ \ \forall u\in  W_{\rm loc}^{1,p}(\mathbb R_+,{\sf m}_N)\cap L^\infty(\mathbb R_+).
\end{equation}
Indeed, the balance condition \eqref{balance-condition} reduces to 
$0=\frac{\theta}{p}-\frac{\theta}{N}+\frac{1-\theta}{r}$ with $r\to 0$, thus $\theta\to 1$ and 
\begin{align*}
	\lim_{r\to 0}\|u\|_{L^{r}(\mathbb R_+,{ \sf m}_N)}^{1-\theta}={\sf m}_N({\rm supp}(u))^{\frac{1}{N}-\frac{1}{p}} .
\end{align*}
It turns out that $	{\sf K}_{\rm opt}={\sf
	T}_{p,N}=N^{-\frac{1}{p}}\omega_N^{-\frac{1}{N}}\left(\frac{p-1}{p-N}\right)^\frac{1}{p'},$ which  can be obtained by the  Barenblatt-type function 
$
u_0(t)=
\left(1-t^\frac{p-N}{p-1}\right)_+,\ t\geq 0,
$
which is the (unique) extremal in \eqref{Talenti-2}, see 
Talenti \cite[Theorem 2.E]{Talenti} (where the case $N\in \mathbb N$ is considered, which can be easily extended to generic $N>1$). The rest is similar as before. 

\subsection{Inequalities with non-Barenblatt-type extremizers} In this subsection we discuss in detail the Nash inequality, and then we sketch the ingredients for another Faber--Krahn inequality (concerning the classical Dirichlet $p$-eigenvalue problem). 

\subsubsection{Nash inequality}\label{subsection-Nash}
We start with an auxiliary result for Bessel functions, which will be important in the sequel. To do this, we recall that $j_\nu:=j_{\nu,1}$ stands for the first positive root of the Bessel function $J_\nu$ of the first kind and order $\nu\geq 0$. 
\begin{proposition}\label{Bessel-lemma}
	Let $\nu\geq 0$. Then 
	\begin{equation}\label{Bessel-inequality}
		\frac{J_\nu(j_{\nu+1}t)}{J_{\nu}(j_{\nu+1})}\leq t^\nu,\ \ \forall t\in [0,1]. 
	\end{equation}
\end{proposition}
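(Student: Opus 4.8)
The plan is to reduce \eqref{Bessel-inequality} to a single monotonicity statement for the auxiliary function $\phi(x):=x^{-\nu}J_\nu(x)$, $x>0$. Writing $J_\nu(j_{\nu+1}t)=(j_{\nu+1}t)^\nu \phi(j_{\nu+1}t)$ and $J_\nu(j_{\nu+1})=j_{\nu+1}^\nu \phi(j_{\nu+1})$, the quotient on the left of \eqref{Bessel-inequality} becomes $t^\nu\,\phi(j_{\nu+1}t)/\phi(j_{\nu+1})$, so that for $t\in(0,1]$ the claimed inequality is equivalent to $\phi(j_{\nu+1}t)/\phi(j_{\nu+1})\le 1$.

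First I would establish that $\phi$ is strictly decreasing on $(0,j_{\nu+1}]$. This follows from the classical Bessel recurrence $\frac{{\rm d}}{{\rm d}x}\big(x^{-\nu}J_\nu(x)\big)=-x^{-\nu}J_{\nu+1}(x)$, which gives $\phi'(x)=-x^{-\nu}J_{\nu+1}(x)$. Since $J_{\nu+1}(x)\sim (x/2)^{\nu+1}/\Gamma(\nu+2)>0$ as $x\to 0^+$ and $j_{\nu+1}$ is by definition the first positive zero of $J_{\nu+1}$, we have $J_{\nu+1}>0$ on $(0,j_{\nu+1})$, whence $\phi'<0$ there; continuity then yields strict monotonicity on $(0,j_{\nu+1}]$. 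Consequently $\phi(j_{\nu+1}t)\ge \phi(j_{\nu+1})$ for every $t\in(0,1]$.

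The delicate point, which I expect to be the main obstacle, is the \emph{sign} of $\phi(j_{\nu+1})$, equivalently of $J_\nu(j_{\nu+1})$: dividing $\phi(j_{\nu+1}t)\ge\phi(j_{\nu+1})$ by $\phi(j_{\nu+1})$ yields the desired direction $\phi(j_{\nu+1}t)/\phi(j_{\nu+1})\le 1$ only if this quantity is \emph{negative}. I would settle this via the interlacing of the positive zeros of $J_\nu$ and $J_{\nu+1}$, namely $j_{\nu,1}<j_{\nu+1}<j_{\nu,2}$, which itself follows from the same recurrence together with Rolle's theorem applied to $\phi$ between consecutive zeros of $J_\nu$. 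Since $J_\nu$ is negative throughout $(j_{\nu,1},j_{\nu,2})$, this gives $J_\nu(j_{\nu+1})<0$, hence $\phi(j_{\nu+1})<0$, and the division reverses the inequality as required, establishing \eqref{Bessel-inequality} for $t\in(0,1]$ (with equality at $t=1$). Finally, the endpoint $t=0$ is checked directly: for $\nu>0$ both sides vanish, while for $\nu=0$ the left side equals $1/J_0(j_1)<0\le 1=0^0$.
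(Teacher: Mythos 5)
Your proposal is correct and is essentially the paper's own argument in different packaging: the paper sets $f(t)=J_\nu(j_{\nu+1}t)-J_{\nu}(j_{\nu+1})t^\nu$ and shows $t\mapsto f(t)t^{-\nu}$ is non-increasing via the recurrence $J_\nu'(x)=-J_{\nu+1}(x)+\frac{\nu}{x}J_\nu(x)$ and $J_{\nu+1}\geq 0$ on $[0,j_{\nu+1}]$ — but $f(t)t^{-\nu}$ is exactly an affine shift of your $\phi(j_{\nu+1}t)$ with $\phi(x)=x^{-\nu}J_\nu(x)$, and the sign fact $J_{\nu}(j_{\nu+1})<0$ is obtained by the same interlacing $j_\nu<j_{\nu+1}<j_{\nu,2}$ (which the paper simply cites from the NIST handbook). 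The one loose end is your parenthetical claim that the interlacing itself follows from Rolle's theorem applied to $\phi$: that only produces \emph{some} zero of $J_{\nu+1}$ in $(j_{\nu,1},j_{\nu,2})$, and to conclude that the \emph{first} zero $j_{\nu+1}$ lies there you must also exclude zeros of $J_{\nu+1}$ in $(0,j_{\nu,1}]$, e.g.\ via the companion identity $\frac{{\rm d}}{{\rm d}x}\bigl(x^{\nu+1}J_{\nu+1}(x)\bigr)=x^{\nu+1}J_\nu(x)$ — or just cite the standard interlacing property, as the paper does.
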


\begin{proof}
	Note first that $J_{\nu}(j_{\nu+1})<0$. Indeed, 
	the zeros of Bessel functions interlace according to the inequality $j_\nu<j_{\nu+1}<j_{\nu,2}<j_{\nu+1,2}<...$, where $j_{\nu,k}$ denotes the $k$th zero of the Bessel function $J_\nu$, see  \cite[rel.\ 10.21.2]{nist}. Moreover, $J_\nu(x)>0$ for every $x\in (0,j_\nu)$. Accordingly, $J_\nu$ is negative on the interval $(j_\nu,j_{\nu,2})$. Since $j_{\nu+1}\in (j_\nu,j_{\nu,2})$, the claim follows. 
	
	For $t=0$, relation \eqref{Bessel-inequality} is trivial. Now, let $f(t)=J_\nu(j_{\nu+1}t)-J_{\nu}(j_{\nu+1}) t^\nu$, $t\in (0,1]$; we are going to prove that $f(t)\geq 0$ for every $t\in (0,1]$.  By the recurrence relation from  \cite[rel.\ 10.6.2]{nist} and the fact that $J_{\nu+1}(x)\geq 0$ for every $x\in [0,j_{\nu+1}]$, 
	we have for every $t\in (0,1]$ that
	\begin{align*}
		f'(t)&=j_{\nu+1} J_\nu'(j_{\nu+1}t)-\nu J_{\nu}(j_{\nu+1}) t^{\nu-1}\\&=j_{\nu+1}\left(-J_{\nu+1}(j_{\nu+1}t)+\frac{\nu}{j_{\nu+1}t}J_\nu(j_{\nu+1}t)\right)-\nu J_{\nu}(j_{\nu+1})t^{\nu-1}\\&=-j_{\nu+1}J_{\nu+1}(j_{\nu+1}t)+\frac{\nu}{t}\left(J_\nu(j_{\nu+1}t)-J_{\nu}(j_{\nu+1}) t^\nu\right)\\&=-j_{\nu+1}J_{\nu+1}(j_{\nu+1}t)+\frac{\nu}{t}f(t)\\&\leq \frac{\nu}{t}f(t). 
	\end{align*}
	Therefore, the function $t\mapsto {f(t)}{t^{-\nu}}$ is non-increasing on $(0,1]$; in particular, ${f(t)}{t^{-\nu}}\geq f(1)=0$, which concludes the proof of  \eqref{Bessel-inequality}.   
\end{proof}

	\begin{theorem}\label{Nash-theorem}  Let $N>1$  and $(X,{\sf d},{\sf m})$ be a metric measure space 	
		supporting the Nash  inequality  for some ${\sf C}>0$, i.e., 
		\begin{equation}\label{Nash-metric-space}
			\|u\|_{L^2(X,{\sf m})}\leq {\sf C} \|\nabla u\|_{L^2(X,{\sf m})}^\frac{N}{N+2} \|u\|_{L^1(X,{\sf m})}^\frac{2}{N+2},\ \ \forall u\in  W^{1,2}(X,{\sf m}), 
		\end{equation}
	and assume that for some $x_0\in X$ the  limit 
	$L_N(x_0)$
	exists in \eqref{Limit-exists-N}. Then, 
	\begin{equation}\label{Nash-volume-growth-estimate}
		{\sf AVR}(X)\geq \left(\frac{{\sf CL}_N}{\sf C}\right)^{N+2},
	\end{equation}
where $${\sf CL}_N=\left(\frac{N+2}{2}\right)^\frac{1}{2}\omega_N^{-\frac{1}{N+2}}\left(\frac{N}{2}j^2_\frac{N}{2}\right)^{-\frac{N}{2(N+2)}}.$$   

\end{theorem}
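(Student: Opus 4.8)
The plan is to deduce Theorem~\ref{Nash-theorem} as a direct specialization of the unified volume growth estimate, Theorem~\ref{main-theorem-unified}, to the Nash regime. Concretely, I would set $p=q=2$, $r=1$ and $\theta=\frac{N}{N+2}\in(0,1)$; a one-line check confirms the balance condition \eqref{balance-condition}, since
\[
\theta\left(\frac1p-\frac1N\right)+\frac{1-\theta}{r}=\frac{N}{N+2}\cdot\frac{N-2}{2N}+\frac{2}{N+2}=\frac12=\frac1q .
\]
With these choices the Gagliardo--Nirenberg--Sobolev inequality \eqref{GN-metric-space} is \emph{exactly} the Nash inequality \eqref{Nash-metric-space}, so hypothesis (ii) of Theorem~\ref{main-theorem-unified} is the standing assumption of the present theorem, while hypothesis (iii) is the assumed existence of the limit $L_N(x_0)$ in \eqref{Limit-exists-N}.

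The substantive work is in verifying hypothesis (i): the sharp radial Nash inequality on the model cone $(\mathbb R_+,|\cdot|,{\sf m}_N)$ with optimal constant ${\sf K}_{\rm opt}={\sf CL}_N$ together with an admissible extremizer $u_0$. Following Carlen and Loss \cite{Carlen-Loss} -- and extending their computation from integer $n$ to arbitrary real $N>1$, which is harmless as it rests only on the weighted radial Euler--Lagrange ODE $-u''-\frac{N-1}{t}u'=\mu u$ and standard Bessel identities -- the optimal constant is ${\sf CL}_N$ and, up to scaling and a multiplicative factor, the (unique) extremizer is compactly supported on $[0,R_0]$ with $R_0=1$, given by
\[
u_0(t)=1-\frac{1}{J_\nu(j_{\nu+1})}\,t^{-\nu}J_\nu(j_{\nu+1}t),\qquad t\in[0,1],\quad \nu=\tfrac N2-1 ,
\]
and $u_0\equiv 0$ on $(1,\infty)$.

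I would then confirm that $u_0$ meets every requirement in Theorem~\ref{main-theorem-unified}(i). Non-negativity is precisely the content of Proposition~\ref{Bessel-lemma}: the inequality $\frac{J_\nu(j_{\nu+1}t)}{J_\nu(j_{\nu+1})}\le t^\nu$ rearranges to $u_0(t)\ge 0$ on $[0,1]$. For monotonicity, the recurrence $\frac{d}{dt}\bigl(t^{-\nu}J_\nu(j_{\nu+1}t)\bigr)=-j_{\nu+1}t^{-\nu}J_{\nu+1}(j_{\nu+1}t)$ gives
\[
u_0'(t)=\frac{j_{\nu+1}\,t^{-\nu}J_{\nu+1}(j_{\nu+1}t)}{J_\nu(j_{\nu+1})} ,
\]
and since $J_\nu(j_{\nu+1})<0$ (as established in the proof of Proposition~\ref{Bessel-lemma}) while $J_{\nu+1}(j_{\nu+1}t)\ge 0$ for $t\in[0,1]$, we get $u_0'\le 0$, so $u_0$ is non-increasing; moreover $u_0'(1)=0$ because $J_{\nu+1}(j_{\nu+1})=0$, which matches the zero extension and shows $u_0\in C^1$. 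Finally, because $R_0=1<\infty$, the asymptotic conditions at infinity in \eqref{extremal-asymptotics} are vacuous, and $|u_0'|^2\in BV_{\rm loc}([0,1))$ holds by analyticity of Bessel functions, so $u_0$ is admissible.

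With (i)--(iii) in place, Theorem~\ref{main-theorem-unified} yields ${\sf AVR}(X)\ge({\sf CL}_N/{\sf C})^{N/\theta}$, and since $\frac N\theta=N+2$ this is exactly \eqref{Nash-volume-growth-estimate}. The only genuinely delicate point is the first step, namely pinning down that the optimal constant on the model is ${\sf CL}_N$ and that the extremizer carries the stated Bessel profile for arbitrary real $N>1$; once this is granted, the sign and monotonicity of $u_0$ follow cleanly from Proposition~\ref{Bessel-lemma} and the standard Bessel recurrences, and the remainder is a bookkeeping specialization of Theorem~\ref{main-theorem-unified}.
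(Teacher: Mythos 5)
Your proof is correct and takes essentially the same route as the paper: specializing Theorem \ref{main-theorem-unified} with $p=q=2$, $r=1$, $\theta=\frac{N}{N+2}$, invoking the Carlen--Loss sharp Nash inequality on the model cone $(\mathbb R_+,|\cdot|,{\sf m}_N)$ (extended from integer $n$ to real $N>1$) with its compactly supported Bessel extremizer on $[0,1]$, and checking the admissibility conditions in \eqref{extremal-asymptotics} via Proposition \ref{Bessel-lemma} and the recurrence $\frac{d}{dt}\bigl(t^{-\nu}J_\nu(j_{\nu+1}t)\bigr)=-j_{\nu+1}t^{-\nu}J_{\nu+1}(j_{\nu+1}t)$. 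In fact your value $\theta=\frac{N}{N+2}$ (forced by the balance condition \eqref{balance-condition} and yielding the exponent $N/\theta=N+2$) silently corrects a typo in the paper's own proof, which writes $\theta=\frac{2}{N+2}$.
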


\begin{proof}
	Let $N>1,$ $p=q=2$, $r=1$ and $\theta=\frac{2}{N+2}$; with these choices, the balance condition \eqref{balance-condition} is clearly verified. Moreover,  on the cone $(\mathbb R_+,|\cdot|,{ \sf m}_N)$ 	one has the sharp Nash inequality  
		\begin{equation}\label{new-Nash-1-dim}
		\displaystyle  	\|u\|_{L^2(\mathbb R_+,{ \sf m}_N)}\leq {\sf CL}_N \| u'\|_{L^2(\mathbb R_+,{ \sf m}_N)}^{\frac{N}{N+2}} \|u\|_{L^1(\mathbb R_+,{ \sf m}_N)}^{\frac{2}{N+2}},\ \ \forall  u\in  W^{1,2}(\mathbb R_+,{ \sf m}_N),
	\end{equation}
 and the unique  extremal in \eqref{new-Nash-1-dim} -- up to scaling and multiplicative factor -- is given by
	\begin{equation}\label{new-extrm-nash--1}
		u_0(t)=\ds\left\{ 
		\begin{array}{lll}
			1-\frac{t^{-\nu}}{J_{\nu}(j_{\nu+1})} J_{\nu}\left(j_{\nu+1}t\right),\ &\text {if}&  t<1; \\
			
			0, &\text {if}&  t\geq 1,
		\end{array}\right.
	\end{equation}
	where $\nu=\frac{N}{2}-1,$ see by Carlen and Loss \cite{Carlen-Loss}. Note that \eqref{new-Nash-1-dim} has been established for $N\in \mathbb N\setminus \{1\}$, but a closer inspection of the proof in \cite{Carlen-Loss} shows that it holds for every $N>1.$
	Accordingly,  supp$(u_0)=[0,1]$, and we may choose $R_0=1$; by construction, $u_0(1)=0$ and by Proposition \ref{Bessel-lemma} it follows that $u_0\geq 0$ and  $u_0'\leq 0$ on $[0,1]$ with $u_0'(1)=0$. Therefore,  we may apply Theorem \ref{main-theorem-unified}. 
\end{proof}

Theorem \ref{main-theorem-CD} together with the latter proof  implies the following result (which in turn, gives also Theorem \ref{main-theorem-Nash}):

\begin{theorem}\label{Nash-theorem-CD}  Let $N>1$  and $(X,{\sf d},{\sf m})$ be an essentially non-branching  ${\sf CD}(0,n)$ metric measure space with ${\sf AVR}(X) >0.$ Then one has
	\begin{equation}\label{Nash-metric-space-CD}
		\|u\|_{L^2(X,{\sf m})}\leq {\sf AVR}(X)^{-\frac{1}{N+2}} {\sf CL}_N \|\nabla u\|_{L^2(X,{\sf m})}^\frac{N}{N+2} \|u\|_{L^1(X,{\sf m})}^\frac{2}{N+2},\ \ \forall u\in  W^{1,2}(X,{\sf m}), 
	\end{equation}
	and  ${\sf AVR}(X)^{-\frac{1}{N+2}} {\sf CL}_N$ is sharp. 
	
	Moreover, equality holds in \eqref{Nash-metric-space-CD} for some non-zero  $u\in  W^{1,2}(X,{\sf m})$   if and only if 
	$X$ is an $N$-volume cone with a tip $x_0\in X$ and up to scaling and multiplicative factor,  one has that $u(x)=u_0\left({\sf AVR}(X)^{\frac{1}{N}}{\sf d}(x_0,x)\right)$  for ${\sf m}$-a.e.\  $x\in B_{x_0}({\sf AVR}(X)^{-\frac{1}{N}})$, where $u_0$ is the function from \eqref{new-extrm-nash--1}.
\end{theorem}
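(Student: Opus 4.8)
The plan is to obtain Theorem \ref{Nash-theorem-CD} as a direct specialization of Theorem \ref{main-theorem-CD}, reusing the parameter choice and model-space analysis already carried out in the proof of Theorem \ref{Nash-theorem}. First I would fix $p=q=2$, $r=1$ and $\theta=\frac{2}{N+2}$; these verify the balance condition \eqref{balance-condition}, so that the abstract inequality \eqref{GN-1-dimensional-model} on the model cone $(\mathbb R_+,|\cdot|,{\sf m}_N)$ becomes exactly the sharp one-dimensional Nash inequality \eqref{new-Nash-1-dim}, whose optimal constant is ${\sf CL}_N$ and whose unique (up to scaling and multiplicative factor) extremizer is the Bessel-type profile $u_0$ of \eqref{new-extrm-nash--1}, by Carlen and Loss \cite{Carlen-Loss} (valid for every $N>1$, as noted in the proof of Theorem \ref{Nash-theorem}).

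Next I would check that $u_0$ meets the structural hypotheses required by Theorem \ref{main-theorem-CD}, borrowing the facts already recorded in the proof of Theorem \ref{Nash-theorem}: $u_0$ is a non-zero, non-increasing $C^1$-function with ${\rm supp}(u_0)=[0,1]$, so that $R_0=1<\infty$. Since $R_0$ is finite, the two asymptotic-at-infinity conditions in \eqref{extremal-asymptotics} are vacuous, and the only remaining requirement, $|u_0'|^2\in BV_{\rm loc}([0,1))$, follows from the smoothness of $u_0$ on $[0,1)$. With all hypotheses of Theorem \ref{main-theorem-CD} in force, the inequality \eqref{Nash-metric-space-CD} together with the sharpness of the constant ${\sf AVR}(X)^{-\frac{1}{N+2}}{\sf CL}_N$ follows at once; here one uses that $W^{1,2}(X,{\sf m})\subseteq W^{1,2}_{\rm loc}(X,{\sf m})\cap L^2(X,{\sf m})$, so the inequality restricts to the stated class.

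For the equality statement I would verify the two extra conditions demanded by the second part of Theorem \ref{main-theorem-CD}: uniqueness of $u_0$ (again from \cite{Carlen-Loss}) and $u_0'\neq 0$ a.e.\ on ${\rm supp}(u_0)=[0,1]$. I expect the latter to be the only mildly technical point, but it is settled by the very computation carried out in the proof of Proposition \ref{Bessel-lemma}: the Bessel recurrence yields $u_0'(t)=\frac{j_{\nu+1}t^{-\nu}}{J_\nu(j_{\nu+1})}J_{\nu+1}(j_{\nu+1}t)$, and since $J_\nu(j_{\nu+1})<0$ while $J_{\nu+1}>0$ on $(0,j_{\nu+1})$, one gets $u_0'<0$ on all of $(0,1)$, whence $u_0'\neq 0$ a.e.\ (this also re-confirms $C^1$-regularity at $t=1$ via $J_{\nu+1}(j_{\nu+1})=0$). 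Granting this, the equivalence (i)$\Leftrightarrow$(ii) of Theorem \ref{main-theorem-CD} applies verbatim, and substituting $R_0=1$ simplifies the radius $R_0/{\sf AVR}(X)^{1/N}$ to ${\sf AVR}(X)^{-1/N}$, giving the claimed $N$-volume-cone characterization with $u(x)=u_0\!\left({\sf AVR}(X)^{1/N}{\sf d}(x_0,x)\right)$ on $B_{x_0}({\sf AVR}(X)^{-1/N})$.
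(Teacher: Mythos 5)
Your proposal is correct and follows essentially the same route as the paper, which obtains Theorem \ref{Nash-theorem-CD} precisely by applying Theorem \ref{main-theorem-CD} together with the verifications already made in the proof of Theorem \ref{Nash-theorem} (the sharp Carlen--Loss constant, the Bessel-type extremizer with $R_0=1$, and the sign information from Proposition \ref{Bessel-lemma}). Your explicit computation $u_0'(t)=\frac{j_{\nu+1}t^{-\nu}}{J_\nu(j_{\nu+1})}J_{\nu+1}(j_{\nu+1}t)<0$ on $(0,1)$ is a welcome extra detail, since it verifies the hypothesis $u_0'\neq 0$ a.e.\ on ${\rm supp}(u_0)$ needed for the equality case of Theorem \ref{main-theorem-CD} more explicitly than the paper's statement $u_0'\leq 0$.
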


\subsubsection{Faber--Krahn  inequality II}
The second Faber--Krahn inequality concerns the classical  Dirichlet $p$-eigenvalue problem. Let $q=p>1$ and $r\to 0$ in the Gigliardo--Nirenberg inequality \eqref{GN-1-dimensional-model}, which reduces to 
\begin{equation}\label{Krahn-inq-1-dim}
		\|u\|_{L^p(\mathbb R_+,{\sf m}_{N})}\leq {\sf K}_{\rm opt} \|u'\|_{L^p(\mathbb R_+,{\sf m}_{N})} {\sf m}_N({\rm supp}(u))^{\frac{1}{N}},\ \ \forall u\in W_{\rm loc}^{1,p}(\mathbb R_+,{\sf m}_N)\cap L^p(\mathbb R_+,{\sf m}_N).
\end{equation}
Indeed, by the balance condition \eqref{balance-condition}, one has that $\theta\to 1$ as $r\to 0$, and 
 \begin{align*}
	 	\lim_{r\to 0}\|u\|_{L^{r}(\mathbb R_+,{ \sf m}_N)}^{1-\theta}={\sf m}_N({\rm supp}(u))^{\frac{1}{N}}.
	 \end{align*}
In addition, a standard compactness argument implies that the best constant $\mathcal K_{p,N}:={\sf K}_{\rm opt}$ in \eqref{Krahn-inq-1-dim} is achieved by a function $u_0:[0,1]\to \mathbb R_+$ with supp$(u_0)=[0,1]$  and solving the ODE:
$$\left(|u_0'(\rho)|^{p-2}u_0'(\rho)\rho^{N-1}\right)'+\left(\mathcal K_{p,N}\omega_N^{1/N}\right)^{-p}u_0^{p-1}(\rho)\rho^{N-1}=0,\ \ \rho\in (0,1).$$ 
In particular, if $p=2$, one has that $\mathcal K_{2,N}=\left(j_{\nu}\omega_N^{1/N}\right)^{-1}$ and up to a multiplicative constant, $u_0(t)=t^{-\nu}J_\nu(j_\nu t)$, $t\in (0,1),$ where $\nu=\frac{N}{2}-1.$ Similar results to Theorems \ref{GN-1-theorem} and \ref{GN-1-theorem-CD} can be also deduced   in the present setting. 

%


\section{Genuine borderline  cases}\label{section-borderline-0}

This section contains two genuinely different borderline cases for the Gagliardo--Nirenberg--Sobolev interpolation inequality; namely, the logarithmic-Sobolev  and  Moser--Trudinger inequalities. 

\subsection{Logarithmic-Sobolev  inequality} It is well-known that both    \eqref{gn-elso} and \eqref{gn-masodik} reduce to the sharp logarithmic-Sobolev inequality whenever $\alpha\to 1$. This inequality states that  
for every $u\in W^{1,p}(\mathbb R_+,{ \sf m}_N)$ with 
$\|u\|_{L^p(\mathbb R_+,{\sf m}_N)}=1$, one has
\begin{equation}\label{log-Sob-1D}
\int_{\mathbb R_+}\vert u\vert^p\log\vert u\vert^p{\rm d}{\sf m}_N\le \frac{N}{p}\log\bigg(\mathcal{L}_{p,N}\|u'\|^p_{L^p(\mathbb R_+,{\sf m}_N)}\bigg),
\end{equation}
where
\begin{equation}	\label{explicit-L}
	\mathcal
	L_{p,N}={
		\frac{p}{N}\left(\frac{p-1}{e}\right)^{p-1}\left(\omega_N{\Gamma\left(\frac{N}{p'}+1\right)}\right)^{-\frac{p}{N}}}, 
\end{equation}
and the unique extremal function, up to scaling, is the Gaussian 
\begin{equation}\label{gaussian-1-dim}
	u_0(t)=
	\omega_N^{-\frac{1}{p}}\Gamma\left(\frac{N}{p'}+1\right)^{-\frac{1}{p}}
	e^{-\frac{t^{p'}}{p}},\ t\geq 0.
\end{equation}
Note  that \eqref{log-Sob-1D} is well-known from Del Pino and Dolbeault \cite{delPino} for $p<N$ and $N\in \mathbb N$, extended to the general case $p,N>1$ by Balogh, Don and Krist\'aly \cite{BDK-TAMS} via optimal mass transportation. 
	
	\begin{theorem}\label{log-sob-2-theorem}   Let $N,p>1$  and $(X,{\sf d},{\sf m})$ be a metric measure space 	
		supporting for some ${\sf C}>0$ the  logarithmic-Sobolev inequality$:$ for every  $u\in W^{1,p}(X,{ \sf m})$ with 
		$\|u\|_{L^p(X,{\sf m})}=1$, one has
		\begin{equation}\label{log-sobo-2-metric-space}
		\int_{X}\vert u\vert^p\log\vert u\vert^p{\rm d}{\sf m}\le \frac{N}{p}\log\bigg({\sf C}\|\nabla u\|^p_{L^p(X,{\sf m})}\bigg). 
		\end{equation}
		If the  limit $L_N(x_0)$
		exists in \eqref{Limit-exists-N} for some $x_0\in X$,   then  
		\begin{equation}\label{log-sob-volume-growth-estimate}
			{\sf AVR}(X)\geq \left(\frac{\mathcal{L}_{p,N}}{{\sf C}}\right)^\frac{N}{p}.
		\end{equation}
	\end{theorem}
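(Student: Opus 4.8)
The plan is to reproduce the blow-down scheme from the proof of Theorem \ref{main-theorem-unified}, now feeding into the logarithmic-Sobolev inequality \eqref{log-sobo-2-metric-space} the dilates of the Gaussian extremizer \eqref{gaussian-1-dim} of the model inequality \eqref{log-Sob-1D}; the only genuinely new feature is the careful bookkeeping of the logarithmic terms, which replaces the purely multiplicative scaling used for the power norms. We may assume ${\sf AVR}(X)<\infty$, since otherwise \eqref{log-sob-volume-growth-estimate} is trivial. Fix $x_0\in X$ as in \eqref{Limit-exists-N}, let $u_0$ be the Gaussian from \eqref{gaussian-1-dim} (which satisfies $\|u_0\|_{L^p(\mathbb R_+,{\sf m}_N)}=1$), and for $R>0$ set $u_R(x)=u_0({\sf d}(x_0,x)/R)$. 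Since $u_0$ and $u_0'$ decay super-polynomially, $u_0$ meets all the integrability and eventual-convexity requirements of \eqref{extremal-asymptotics} with $R_0=\infty$; moreover $u_R$ is globally Lipschitz and lies in $W^{1,p}(X,{\sf m})$, so the normalized function $\tilde u_R:=\|u_R\|_{L^p(X,{\sf m})}^{-1}u_R$ is admissible in \eqref{log-sobo-2-metric-space}.

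First I would establish, via Proposition \ref{proposition-co-area} and dominated convergence following verbatim Steps 1--2 of the proof of Theorem \ref{main-theorem-unified} (with $q=r=p$), the asymptotics
\[
A_R:=\|u_R\|_{L^p(X,{\sf m})}^p\sim {\sf AVR}(X)\,R^N,\qquad G_R:=\|\nabla u_R\|_{L^p(X,{\sf m})}^p\le (1+o(1))\,{\sf AVR}(X)\,\|u_0'\|_{L^p(\mathbb R_+,{\sf m}_N)}^p\,R^{N-p},
\]
together with the entropy asymptotics $E_R:=\int_X u_R^p\log u_R^p\,{\rm d}{\sf m}\sim {\sf AVR}(X)\,E_0\,R^N$, where $E_0:=\int_{\mathbb R_+}u_0^p\log u_0^p\,{\rm d}{\sf m}_N$.

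The entropy $E_R$ is where the argument departs from the power-type estimates, and I expect it to be the main obstacle, since the logarithm does not pass through the dilation $t\mapsto t/R$. The key observation is that for the Gaussian one has $\log u_0^p(t)=\log c-t^{p'}$ with $c=\big(\omega_N\Gamma(N/p'+1)\big)^{-1}$, so that $u_0^p\log u_0^p=(\log c)\,u_0^p-u_0^p\,t^{p'}$. This splits $E_R$ as $(\log c)A_R$ minus the integral of $u_0^p({\sf d}/R)({\sf d}/R)^{p'}$ over $X$, and the latter is again of the polynomially-weighted type to which Proposition \ref{proposition-co-area} and dominated convergence apply; combined with the same splitting of $E_0$ and the normalization $\|u_0\|_{L^p(\mathbb R_+,{\sf m}_N)}=1$, this yields $E_R\sim {\sf AVR}(X)\,E_0\,R^N$.

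Finally, inserting $\tilde u_R$ into \eqref{log-sobo-2-metric-space} and using $\|\tilde u_R\|_{L^p(X,{\sf m})}=1$ gives
\[
\frac{E_R}{A_R}-\log A_R\le \frac{N}{p}\log\!\Big({\sf C}\,\frac{G_R}{A_R}\Big).
\]
Adding $N\log R$ to both sides, the left-hand side converges to $E_0-\log{\sf AVR}(X)$ (since $\log(A_R/R^N)\to\log{\sf AVR}(X)$ and $E_R/A_R\to E_0$), while the right-hand side is bounded above by $\frac{N}{p}\log{\sf C}+\frac{N}{p}\log\|u_0'\|_{L^p(\mathbb R_+,{\sf m}_N)}^p+o(1)$, because $\log(G_R/A_R)\le -p\log R+\log\|u_0'\|_{L^p(\mathbb R_+,{\sf m}_N)}^p+o(1)$. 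The divergent $-N\log R$ terms cancel on the two sides, which is the analytic shadow of the scale-invariant balance behind \eqref{log-Sob-1D} and is the crux that makes the limit finite. Letting $R\to\infty$ leaves $E_0-\log{\sf AVR}(X)\le \frac{N}{p}\log{\sf C}+\frac{N}{p}\log\|u_0'\|_{L^p(\mathbb R_+,{\sf m}_N)}^p$; substituting the sharp model identity $E_0=\frac{N}{p}\log\big(\mathcal{L}_{p,N}\|u_0'\|_{L^p(\mathbb R_+,{\sf m}_N)}^p\big)$ coming from equality in \eqref{log-Sob-1D} for $u_0$, the $\|u_0'\|$-terms cancel and one is left with $\frac{N}{p}\log(\mathcal{L}_{p,N}/{\sf C})\le \log{\sf AVR}(X)$, i.e.\ exactly \eqref{log-sob-volume-growth-estimate}.
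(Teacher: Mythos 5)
Your proposal is correct and follows essentially the same route as the paper's proof: the same Gaussian test functions $u_R$, the same use of Proposition \ref{proposition-co-area} with dominated convergence for the $L^p$-norm, Cheeger-energy and entropy asymptotics, and the same normalization of $\tilde u_R$ with the cancellation of the divergent $N\log R$ terms (which the paper realizes by dividing by $R^N$ inside the logarithms) before invoking the sharp model identity \eqref{kesobb-log-Sob-1D}. Your splitting $u_0^p\log u_0^p=(\log c)\,u_0^p-u_0^p\,t^{p'}$ is only a minor technical variant of the paper's direct application of Proposition \ref{proposition-co-area} to $u_0^p\log u_0^p$, and if anything makes the integrability bookkeeping for the sign-changing entropy density slightly more explicit.
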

	\begin{proof}
		We may assume that $L_N(x_0)<\infty$; otherwise, we have nothing to prove. For every $R>0$, we consider the function 
		$u_R(x)=u_0\left(\frac{{\sf d}(x_0,x)}{R}\right),\ \ x\in X,$
		where $u_0\in  W^{1,p}(\mathbb R_+,{\sf m}_N)$ is the Gaussian from \eqref{gaussian-1-dim}, i.e., 
		$\|u_0\|_{L^p(\mathbb R_+,{\sf m}_N)}=1$ and 
		\begin{equation}\label{kesobb-log-Sob-1D}
			\int_{\mathbb R_+} u_0^p\log u_0^p{\rm d}{\sf m}_N = \frac{N}{p}\log\bigg(\mathcal{L}_{p,N}\|u_0'\|^p_{L^p(\mathbb R_+,{\sf m}_N)}\bigg).
		\end{equation} 
		
		We notice that $u_R\in W^{1,p}(X,{ \sf m})$ for every $R>0.$ Indeed, due to  $L_N(x_0)<\infty$ and the fast decay property of the Gaussian at infinity, one can proceed similarly as in \eqref{r-0-construction} and \eqref{derivative-oo00} to show that  $(u_0^p)'(\rho){\sf m}(B_{x_0}(R\rho))\in L^1(\mathbb R_+)$ and $ (|u_0'|^p)'(\rho){\sf m}(B_{x_0}(R\rho))\in L^1(\mathbb R_+)$, respectively. Therefore, by using Proposition \ref{proposition-co-area} and \eqref{grad-estim-R}, it follows that 
		\begin{equation}\label{1-kell-kesobb}
			\|u_R\|_{L^p(X,{\sf m})}^p=\int_X u_R^p{\rm d}{\sf m}=\int_X u_0^p\left(\frac{{\sf d}(x_0,x)}{R}\right){\rm d}{\sf m}=-\int_{\mathbb R_+}(u_0^p)'(\rho){\sf m}(B_{x_0}(R\rho)){\rm d}\rho,
		\end{equation}
	and 
		\begin{equation}\label{2-kell-kesobb}
		\|\nabla u_R\|_{L^p(X,{\sf m})}^p\leq \frac{1}{R^p}\int_X |u_0'|^p\left(\frac{{\sf d}(x_0,x)}{R}\right){\rm d}{\sf m}=-\frac{1}{R^p}\int_{\mathbb R_+} (|u_0'|^p)'(\rho){\sf m}\left(B_{x_0}(R\rho)\right){\rm d}\rho.
	\end{equation}
In addition, by Proposition \ref{proposition-co-area},	
 it also follows that 	
\begin{equation}\label{3-kell-kesobb}
	\int_{X} u_R^p\log  u_R^p{\rm d}{\sf m}=-\int_{\mathbb R_+}(u_0^p\log u_0^p)'(\rho){\sf m}\left(B_{x_0}(R\rho)\right){\rm d}\rho.
\end{equation}		
By relations \eqref{1-kell-kesobb}, \eqref{2-kell-kesobb}, \eqref{3-kell-kesobb} and Lebesgue dominated convergence theorem, it follows that
\begin{align}\label{1-1-kell}
\nonumber	\lim_{R\to \infty}\frac{\|u_R\|_{L^p(X,{\sf m})}^p}{R^N}&=-{\sf AVR}(X)\omega_N\int_{\mathbb R_+}(u_0^p)'(\rho)\rho^N{\rm d}\rho={\sf AVR}(X)\omega_NN\int_{\mathbb R_+}u_0^p(\rho)\rho^{N-1}{\rm d}\rho\\&={\sf AVR}(X)\|u_0\|_{L^p(\mathbb R_+,{\sf m}_N)}^p={\sf AVR}(X),
\end{align}
\begin{align}\label{2-1-kell}
	\nonumber	\lim_{R\to \infty}\frac{\|\nabla u_R\|_{L^p(X,{\sf m})}^p}{R^{N-p}}&\leq -{\sf AVR}(X)\omega_N\int_{\mathbb R_+} (|u_0'|^p)'(\rho)\rho^N{\rm d}\rho={\sf AVR}(X)\omega_NN\int_{\mathbb R_+}|u_0'|^p(\rho)\rho^{N-1}{\rm d}\rho\\&={\sf AVR}(X)\|u_0'\|_{L^p(\mathbb R_+,{\sf m}_N)}^p,
\end{align}
and 
\begin{align}\label{3-1-kell}
	\nonumber	\lim_{R\to \infty}\frac{\ds\int_{X} u_R^p\log  u_R^p{\rm d}{\sf m}}{R^N}&=-{\sf AVR}(X)\omega_N\int_{\mathbb R_+}(u_0^p\log u_0^p)'(\rho)\rho^N{\rm d}\rho={\sf AVR}(X)\omega_NN\int_{\mathbb R_+}u_0^p\log u_0^p\rho^{N-1}{\rm d}\rho\\&={\sf AVR}(X)\int_{\mathbb R_+}u_0^p\log u_0^p{\rm d}{\sf m}_N.
\end{align}

According to  properties \eqref{1-kell-kesobb} and \eqref{2-kell-kesobb}, we may use  $$\tilde u_R=\frac{u_R}{\|u_R\|_{L^p(X,{\sf m})}}\in W^{1,p}(X,{ \sf m})$$
as a test function in \eqref{log-sobo-2-metric-space}. Indeed, since $\|\tilde u_R\|_{L^p(X,{\sf m})}=1$, one has that 
			\begin{equation}\label{log-sobo-2-metric-space-99}
			\int_{X} \tilde u_R^p\log \tilde u_R^p{\rm d}{\sf m}\le \frac{N}{p}\log\bigg({\sf C}\|\nabla \tilde u_R\|^p_{L^p(X,{\sf m})}\bigg). 
		\end{equation}
Since 
	$$\int_{X} \tilde u_R^p\log \tilde u_R^p{\rm d}{\sf m}=\frac{1}{\|u_R\|_{L^p(X,{\sf m})}^p}\left(\int_{X} u_R^p\log  u_R^p{\rm d}{\sf m}-\|u_R\|_{L^p(X,{\sf m})}^p\log\|u_R\|_{L^p(X,{\sf m})}^p\right),$$	
	relation  \eqref{log-sobo-2-metric-space-99} implies that
	\begin{equation}\label{00-11-log}
			\frac{1}{\|u_R\|_{L^p(X,{\sf m})}^p}\int_{X} u_R^p\log  u_R^p{\rm d}{\sf m}-\log\|u_R\|_{L^p(X,{\sf m})}^p \leq \frac{N}{p}\log\bigg({\sf C}\frac{\|\nabla  u_R\|^p_{L^p(X,{\sf m})}}{\|u_R\|_{L^p(X,{\sf m})}^p}\bigg).
	\end{equation}
Writing \eqref{00-11-log} equivalently into 
	$$
	\frac{R^N}{\|u_R\|_{L^p(X,{\sf m})}^p}\int_{X} \frac{u_R^p\log  u_R^p}{R^N}{\rm d}{\sf m}-\log\frac{\|u_R\|_{L^p(X,{\sf m})}^p}{R^N} \leq \frac{N}{p}\log\bigg({\sf C}\frac{\|\nabla  u_R\|^p_{L^p(X,{\sf m})}}{R^{N-p}}	\frac{R^N}{\|u_R\|_{L^p(X,{\sf m})}^p}\bigg),
$$
and letting $R\to \infty$ in the latter inequality, by relations \eqref{1-1-kell}, \eqref{2-1-kell}, \eqref{3-1-kell}, it yields that
		$$\int_{\mathbb R_+}u_0^p\log u_0^p{\rm d}{\sf m}_N-\log {\sf AVR}(X)\leq \frac{N}{p}\log\left({\sf C}\|u_0'\|_{L^p(\mathbb R_+,{\sf m}_N)}^p\right).$$
Combining the latter relation with  \eqref{kesobb-log-Sob-1D}, we obtain that 
$
	{\sf AVR}(X)\geq \left(\frac{\mathcal{L}_{p,N}}{{\sf C}}\right)^\frac{N}{p},$
	which is precisely the desired relation \eqref {log-sob-volume-growth-estimate}.
	\end{proof}

The sharp logarithmic Sobolev inequality on ${\sf CD}(0,N)$ spaces has been already established by Balogh, Krist\'aly and Tripaldi \cite{BKT}, the equality case being  discussed by Nobili and Violo \cite{NV-new}; for  completeness, we summarize its most general form.   

\begin{theorem}\label{log-sob-2-theorem-CD} {\rm (see \cite{BKT, NV-new})}   Let $N,p>1$  and $(X,{\sf d},{\sf m})$ be an essentially non-branching  ${\sf CD}(0,N)$ metric measure space with ${\sf AVR}(X) >0.$ Then for any  $u\in W^{1,p}(X,{ \sf m})$ with 
	$\|u\|_{L^p(X,{\sf m})}=1$, one has
	\begin{equation}\label{log-sobo-2-metric-space-CD}
		\int_{X}\vert u\vert^p\log\vert u\vert^p{\rm d}{\sf m}\le \frac{N}{p}\log\bigg({\sf AVR}(X)^{-\frac{p}{N}}\mathcal
		L_{p,N}\|\nabla u\|^p_{L^p(X,{\sf m})}\bigg), 
	\end{equation}
	and the constant ${\sf AVR}(X)^{-\frac{p}{N}}\mathcal
	L_{p,N}$ is sharp, where $\mathcal
	L_{p,N}$ is from  \eqref{explicit-L}. 
	
	Moreover, equality holds in \eqref{log-sobo-2-metric-space-CD} for some non-zero, non-negative  $u\in  W^{1,p}(X,{\sf m})$ if and only if 
	$X$ is an $N$-volume cone with a tip $x_0\in X$ and up to scaling, one has that $u(x)=\left(\Gamma(\frac{N}{p'}+1)\omega_N{\sf AVR}(X)\right)^{-\frac{1}{p}}e^{-\frac{{\sf d}^{p'}(x_0,x)}{p}}$  for ${\sf m}$-a.e.\ $x\in X$.
\end{theorem}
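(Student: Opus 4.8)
The plan is to follow the three-part scheme of the proof of Theorem~\ref{main-theorem-CD}, adapting the rearrangement argument to the logarithmic nonlinearity; since the statement is already recorded in \cite{BKT, NV-new}, I will only indicate how it fits the framework developed above. First I would establish the inequality \eqref{log-sobo-2-metric-space-CD}. Fix $u\in W^{1,p}(X,{\sf m})$ with $\|u\|_{L^p(X,{\sf m})}=1$; without loss of generality $u\geq 0$, and let $u^*$ be its non-increasing rearrangement on $(\mathbb{R}_+,|\cdot|,{\sf m}_N)$. The Cavalieri principle \eqref{Cavallieri}, applied with $F(t)=t^p$ and with $F(t)=t^p\log t^p$, gives $\|u^*\|_{L^p(\mathbb{R}_+,{\sf m}_N)}=1$ and $\int_X|u|^p\log|u|^p\,{\rm d}{\sf m}=\int_{\mathbb{R}_+}(u^*)^p\log(u^*)^p\,{\rm d}{\sf m}_N$. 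Feeding $u^*$ into the one-dimensional sharp log-Sobolev inequality \eqref{log-Sob-1D} and then using the P\'olya--Szeg\H o inequality \eqref{Polya-Szego} together with monotonicity of the logarithm yields
\begin{align*}
\int_X|u|^p\log|u|^p\,{\rm d}{\sf m}
&\leq \frac{N}{p}\log\!\left(\mathcal{L}_{p,N}\|(u^*)'\|_{L^p(\mathbb{R}_+,{\sf m}_N)}^p\right)\\
&\leq \frac{N}{p}\log\!\left({\sf AVR}(X)^{-\frac{p}{N}}\mathcal{L}_{p,N}\|\nabla u\|_{L^p(X,{\sf m})}^p\right),
\end{align*}
which is precisely \eqref{log-sobo-2-metric-space-CD}. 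The sharpness of the constant follows at once from Theorem~\ref{log-sob-2-theorem}: since $(X,{\sf d},{\sf m})$ is ${\sf CD}(0,N)$, the limit $L_N(x_0)$ exists and equals ${\sf AVR}(X)$, so any admissible constant ${\sf C}$ in \eqref{log-sobo-2-metric-space} satisfies ${\sf C}\geq{\sf AVR}(X)^{-\frac{p}{N}}\mathcal{L}_{p,N}$ by \eqref{log-sob-volume-growth-estimate}.

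For the implication (i)$\Rightarrow$(ii) I would mimic Step~3 of the proof of Theorem~\ref{main-theorem-CD}. Equality in \eqref{log-sobo-2-metric-space-CD} forces equality in both displayed inequalities above: the first identifies $u^*$ as an extremizer of \eqref{log-Sob-1D}, hence, by uniqueness up to scaling of the Gaussian \eqref{gaussian-1-dim}, $u^*$ is one of its $L^p$-preserving dilations $v_\lambda(t)=\lambda^{N/p}u_0(\lambda t)$; the second forces equality in \eqref{Polya-Szego} with both sides finite and non-zero, whence the Nobili--Violo rigidity result shows that $X$ is an $N$-volume cone with some tip $x_0$, see \eqref{volume-cone}. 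Since the Gaussian satisfies $(u^*)'\neq0$ a.e.\ on $(0,\infty)=\{{\rm ess\,inf}\,u<u^*<{\rm ess\,sup}\,u\}$, relation \eqref{u=u^*} gives that $u$ is $x_0$-radial, and selecting the scaling $\lambda={\sf AVR}(X)^{-1/N}$ makes the argument of $u_0$ equal to ${\sf d}(x_0,\cdot)$ and produces exactly the explicit profile in the statement.

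The converse (ii)$\Rightarrow$(i) is a direct computation entirely analogous to \eqref{u_metrikus-2} and \eqref{gradiens-N-cone}: on an $N$-volume cone one has ${\sf m}(B_{x_0}(r))={\sf AVR}(X)\omega_Nr^N$, so Proposition~\ref{proposition-co-area} reduces $\|u\|_{L^p(X,{\sf m})}^p$, $\int_X|u|^p\log|u|^p\,{\rm d}{\sf m}$ and $\|\nabla u\|_{L^p(X,{\sf m})}^p$ to weighted one-dimensional integrals against ${\sf m}_N$, after which the normalization constant $\left(\Gamma(\tfrac{N}{p'}+1)\omega_N{\sf AVR}(X)\right)^{-1/p}$ is precisely the one forcing $\|u\|_{L^p(X,{\sf m})}=1$ and turning \eqref{log-Sob-1D} into an equality. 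I expect the main obstacle to be bookkeeping rather than conceptual: the functional $t\mapsto t^p\log t^p$ changes sign and is not a pure power, so one must verify that the Cavalieri principle and the change-of-variables formula of Proposition~\ref{proposition-co-area} genuinely apply to it — the fast Gaussian decay and integrability estimates of the type \eqref{r-0-construction} and \eqref{derivative-oo00} secure this — and one must check that the logarithm may be carried through the P\'olya--Szeg\H o step while preserving the direction of the inequality, which is exactly where the monotonicity of $\log$ is invoked.
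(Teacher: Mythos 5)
Your argument is correct and follows exactly the proof scheme that the paper's framework prescribes for this borderline case: Cavalieri's principle \eqref{Cavallieri} plus the P\'olya--Szeg\H o inequality \eqref{Polya-Szego} applied to the one-dimensional sharp inequality \eqref{log-Sob-1D} for the upper bound, the volume-growth Theorem~\ref{log-sob-2-theorem} (with $L_N(x_0)={\sf AVR}(X)$ on ${\sf CD}(0,N)$ spaces) for sharpness, and the Nobili--Violo rigidity \eqref{volume-cone}--\eqref{u=u^*} together with uniqueness of the Gaussian \eqref{gaussian-1-dim} for the equality case, mirroring Steps 1--3 of the proof of Theorem~\ref{main-theorem-CD}. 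The paper itself states this theorem without proof, deferring to \cite{BKT, NV-new}, so your proposal supplies precisely the argument its machinery suggests, including the correct scaling $\lambda={\sf AVR}(X)^{-1/N}$ and the integrability checks in the spirit of \eqref{r-0-construction} and \eqref{derivative-oo00} needed for the sign-changing integrand $t\mapsto t^p\log t^p$.
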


\subsection{Moser--Trudinger inequality}\label{section-MT}
Another limit case in Sobolev inequalities is the Moser--Tru\-dinger inequality. Indeed, if $\Omega\subset \mathbb R^n$ is an open  bounded  set, the Sobolev space $W_0^{1,p}(\Omega)$ can be continuously embedded into $L^q(\Omega)$ for every $q\in [1,\frac{np}{n-p}]$ whenever $1<p<n.$ Although expected, when $p\to n$, the space $W_0^{1,n}(\Omega)$ cannot be embedded into $L^\infty(\Omega)$. However, according to Trudinger \cite{Trudinger}, $W_0^{1,p}(\Omega)$ can be embedded into the Orlicz space $L_{\phi_n}(\Omega)$ for the Young function $\phi_n(s)=e^{\alpha|s|^\frac{n}{n-1}}-1$, $s\in \mathbb R$, for $\alpha>0$ sufficiently small. In fact, Moser \cite{Moser} proved that  
for every open set $\Omega\subset \mathbb R^n$ $(n\geq 2)$ with finite Lebesgue-measure, one has that 
\begin{equation}\label{supremum-constant-MT}
	\sup\left\{\frac{1}{\mathcal L^n(\Omega)}\int_\Omega e^{\alpha|u|^\frac{n}{n-1}}{\rm d}x:u\in W_0^{1,n}(\Omega),\|\nabla u\|_{L^n(\Omega)}\leq 1\right\}=\left\{
	\begin{array}{lll}
		C(\alpha,n)<+\infty &{\rm
			if} & 0<\alpha\leq \alpha_n;
		\\ +\infty& {\rm
			if} & \alpha>\alpha_n,
	\end{array}
	\right.
\end{equation}
where $\alpha_n=n\sigma_{n-1}^\frac{1}{n-1}$ is the critical exponent in the Moser--Trudinger inequality, the value $\sigma_{n-1}$ being the surface area of the unit
ball in $\mathbb R^n.$ It has been proved first that the above supremum is attained for $\alpha=\alpha_n$ and for the unit ball $\Omega=B_0(1)\subset \mathbb R^n$, see Carleson and Chang \cite{Carleson-Chang}, and later for $\alpha=\alpha_n$ and arbitrary domains $\Omega\subset \mathbb R^n$, see Lin \cite{Lin-TAMS}. 

For further use,  we recall  the Moser function  $\tilde w_k(x)=w_k(|x|),$ $k\in \mathbb N$, $x\in B_0(1)\subset \mathbb R^n$, where
\begin{equation}\label{Moser-function}
	w_k(t)=\frac{1}{\sigma_{n-1}^{1/n}}\left\{
	\begin{array}{lll}
		(\log k)^\frac{n-1}{n}, &{\rm
			if} & t\in [0,\frac{1}{k});
		\\ \frac{\log(1/t)}{{(\log k)}^{{1}/{n}}},& {\rm
			if} & \frac{1}{k}\leq t\leq 1;\\
		0,&{\rm
			if} &  t> 1.
	\end{array}
	\right.
\end{equation}
A simple argument shows that supp$\,(\tilde w_k)=\overline B_0(1)$,   $\|\nabla \tilde w_k\|_{L^n(B_0(1))}=\| w_k'\|_{L^n((0,1),{\sf m}_n)}=1$ and  $\tilde w_k\in W_0^{1,n}(B_0(1))$ for every $k\in \mathbb N$, where, as usual, ${\sf m}_n=n\omega_nr^{n-1}\mathcal L^1$. Moreover,  for every  $\alpha>\alpha_n$, one has that 
\begin{equation}\label{moser-vegtelen}
\lim_{k\to \infty}	\int_{B_0(1)} e^{\alpha \tilde w_k^\frac{n}{n-1}}{\rm d}x=\lim_{k\to \infty}	\int_0^1 e^{\alpha  w_k^\frac{n}{n-1}}{\rm d}{\sf m}_n=+\infty.
\end{equation}

The volume growth result under the validity of the Moser--Trudinger inequality reads as follows: 

	\begin{theorem}\label{MT-theorem}   Let $n\geq 2$ be an integer  and $(X,{\sf d},{\sf m})$ be a metric measure space 	
	supporting the following Moser--Trudinger inequality$:$ there exists ${\sf C}>0$ such that for every open set $\Omega\subset X$ with finite ${\sf m}$-measure one has  
	\begin{equation}\label{MT-metric-space}
		\sup\left\{\frac{1}{{\sf m}(\Omega)}\int_\Omega e^{{\sf C}|u|^\frac{n}{n-1}}{\rm d}{\sf m}:u\in W_0^{1,n}(\Omega,{\sf m}),\|\nabla u\|_{L^n(\Omega,{\sf m})}\leq 1\right\}<\infty.
	\end{equation}
	Assume that for some $x_0\in X$ the  limit  $L_n(x_0)$
	exists in \eqref{Limit-exists-N}. Then  
	\begin{equation}\label{MT-volume-growth-estimate}
		{\sf AVR}(X)\geq \left(\frac{{\sf C}}{\alpha_n}\right)^{n-1}.
	\end{equation}
\end{theorem}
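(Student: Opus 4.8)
The plan is to argue by contradiction, transplanting the Moser functions $w_k$ of \eqref{Moser-function} onto $X$ by a blow-down, in exactly the spirit of Steps 1--3 of the proof of Theorem \ref{main-theorem-unified} and of the proof of Theorem \ref{log-sob-2-theorem}. Suppose ${\sf AVR}(X)<({\sf C}/\alpha_n)^{n-1}$; the goal is to exhibit admissible functions along which the Moser--Trudinger functional in \eqref{MT-metric-space} diverges, the driving model fact being the blow-up \eqref{moser-vegtelen}, valid precisely for supercritical exponents $\alpha>\alpha_n$.

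First I would set, for $k\in\mathbb N$ and $R>0$, the radial test function $u_{k,R}(x)=w_k({\sf d}(x_0,x)/R)$, supported in $\overline{B_{x_0}(R)}$ and equal to the constant height $\sigma_{n-1}^{-1/n}(\log k)^{(n-1)/n}$ on the cap $B_{x_0}(R/k)$. As in \eqref{grad-estim-R}, the chain rule for locally Lipschitz functions together with \eqref{Cheeger-0-ineq} gives $|\nabla u_{k,R}|_n\le \tfrac1R|w_k'|({\sf d}(x_0,\cdot)/R)$; since here $p=N=n$ the scaling exponent $N-p$ is trivial, so applying Proposition \ref{proposition-co-area} and \eqref{Limit-exists-N} exactly as in the passage to \eqref{limit-p} yields $\limsup_{R\to\infty}\|\nabla u_{k,R}\|_{L^n(X,{\sf m})}^n\le {\sf AVR}(X)\|w_k'\|_{L^n(\mathbb R_+,{\sf m}_n)}^n={\sf AVR}(X)$, the last step being the normalization $\|w_k'\|_{L^n(\mathbb R_+,{\sf m}_n)}=1$. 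Writing $c_{k,R}=\|\nabla u_{k,R}\|_{L^n(X,{\sf m})}$ and using the admissible competitor $\tilde u_{k,R}=u_{k,R}/c_{k,R}$, for which $\|\nabla\tilde u_{k,R}\|_{L^n(X,{\sf m})}=1$, the exponent of the integrand becomes ${\sf C}|\tilde u_{k,R}|^{n/(n-1)}=\alpha_{k,R}\,w_k({\sf d}(x_0,\cdot)/R)^{n/(n-1)}$ with $\alpha_{k,R}={\sf C}/c_{k,R}^{n/(n-1)}$. Because $c_{k,R}^n\le{\sf AVR}(X)+\varepsilon$ for $R$ large, the standing hypothesis forces $\alpha_{k,R}\ge {\sf C}/({\sf AVR}(X)+\varepsilon)^{1/(n-1)}=:\bar\alpha>\alpha_n$ for \emph{every} $k$, once $\varepsilon$ is small.

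Then I would evaluate the normalized functional on $\Omega=B_{x_0}(R)$. Applying Proposition \ref{proposition-co-area} to $h=e^{\alpha_{k,R}w_k(\cdot/R)^{n/(n-1)}}-1$ and dividing by ${\sf m}(B_{x_0}(R))$, the limit $R\to\infty$ converts the left-hand side of \eqref{MT-metric-space} into the one-dimensional model functional $\omega_n^{-1}\int_0^1 e^{\alpha_k w_k^{n/(n-1)}}{\rm d}{\sf m}_n$ with $\alpha_k\ge\bar\alpha>\alpha_n$, in the same way that \eqref{limit-q}--\eqref{limit-p} turn $X$-integrals into integrals on $\mathbb R_+$ against ${\sf m}_n$. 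By the model blow-up \eqref{moser-vegtelen} this quantity tends to $+\infty$ as $k\to\infty$, so the Moser--Trudinger functional of $\tilde u_{k,R}$ on the balls $B_{x_0}(R)$ exceeds any prescribed threshold once $k$ and then $R$ are large, which is incompatible with \eqref{MT-metric-space} holding with the single constant ${\sf C}$. This contradiction yields ${\sf AVR}(X)\ge({\sf C}/\alpha_n)^{n-1}$.

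The hard part will be the honest management of the two intertwined limits: the asymptotic-volume normalization ${\sf m}(B_{x_0}(R\rho))\sim{\sf AVR}(X)\,\omega_n(R\rho)^n$ is only available as $R\to\infty$ (entering through Proposition \ref{proposition-co-area}), whereas \eqref{moser-vegtelen} is a $k\to\infty$ effect concentrated on the shrinking cap. I would therefore let $R\to\infty$ first, for each fixed $k$, replacing the $X$-functional by its model counterpart carrying the supercritical coefficient $\alpha_k$; this is exactly where the \emph{uniform} lower bound $\alpha_k\ge\bar\alpha>\alpha_n$, independent of $k$, is indispensable, since otherwise the coefficient could degrade down to $\alpha_n$ and the divergence would be lost. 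Additional care is needed to justify the dominated-convergence and change-of-variables steps for the exponential integrand, using the finiteness estimates of the type \eqref{r-0-construction} and \eqref{derivative-oo00} and the fast decay of $w_k$, and to verify $c_{k,R}>0$ so that the normalization $\tilde u_{k,R}=u_{k,R}/c_{k,R}$ is legitimate.
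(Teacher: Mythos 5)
Your proposal is correct and follows essentially the same route as the paper's proof of Theorem \ref{MT-theorem}: a contradiction argument using the rescaled Moser functions $w_k({\sf d}(x_0,\cdot)/R)$, Proposition \ref{proposition-co-area} together with dominated convergence to reduce to the one-dimensional model as $R\to\infty$ at each fixed $k$, and the supercritical blow-up \eqref{moser-vegtelen} as $k\to\infty$. The only (harmless) deviation is your normalization by the actual gradient norm $c_{k,R}$, which produces $k,R$-dependent exponents $\alpha_{k,R}\geq\bar\alpha>\alpha_n$, whereas the paper multiplies instead by the fixed factor $({\sf AVR}(X)+\epsilon)^{-1/n}$ so that the test functions $u_{R,\epsilon,k}$ are admissible with the single fixed supercritical exponent $\alpha={\sf C}/({\sf AVR}(X)+\epsilon)^{1/(n-1)}$; by monotonicity of the exponential in the exponent, the two variants are equivalent.
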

\begin{proof} If $L_n(x_0)=\infty$, we have nothing to prove; thus, we consider that $L_n(x_0)<\infty$. Assume by contradiction that ${\sf AVR}(X):=L_n(x_0)< \left(\frac{{\sf C}}{\alpha_n}\right)^{n-1}$; in particular, one can find a small  $\epsilon>0$ such that 
	\begin{equation}\label{eps-valasztas}
		{\sf AVR}(X)+\epsilon< \left(\frac{{\sf C}}{\alpha_n}\right)^{n-1}.
	\end{equation}
By using the Moser function from \eqref{Moser-function},  we consider for every  $R>0$ and $k\in \mathbb N$ the function $$u_{R,\epsilon,k}(x)=\frac{1}{({\sf AVR}(X)+\epsilon)^\frac{1}{n}}w_k\left(\frac{{\sf d}(x_0,x)}{R}\right),\ \ x\in B_{x_0}(R).$$
	 A similar computation as in the proof of Theorem \ref{main-theorem-unified} yields  that 
\begin{align*}
	\nonumber	\|\nabla u_{R,\epsilon,k}\|^n_{L^n(B_{x_0}(R),{\sf m})}&\leq \frac{1}{{\sf AVR}(X)+\epsilon} \frac{1}{R^n}\int_{B_{x_0}(R)} |w_k'|^n\left(\frac{{\sf d}(x_0,x)}{R}\right){\rm d}{\sf m}\\&=\frac{1}{{\sf AVR}(X)+\epsilon}\frac{1}{R^n}\left(\ds |w_k'|^n(1){\sf m}\left(B_{x_0}(R)\right)	-\ds\int_0^1 (|w_k'|^n)'(\rho){\sf m}\left(B_{x_0}(R\rho)\right){\rm d}\rho\right).
\end{align*}
Letting $R\to \infty$, by the Lebesgue dominated convergence theorem   one has that 
\begin{align*}\lim_{R\to \infty}\|\nabla u_{R,\epsilon,k}\|^n_{L^n(B_{x_0}(R),{\sf m})}&\leq \frac{\omega_n{\sf AVR}(X)}{{\sf AVR}(X)+\epsilon}\left(\ds |w_k'|^n(1)	-\ds\int_0^1 (|w_k'|^n)'(\rho)\rho^n{\rm d}\rho\right)\\&=\frac{{\sf AVR}(X)}{{\sf AVR}(X)+\epsilon}n\omega_n\ds \int_0^1 |w_k'|^n(\rho)\rho^{n-1}{\rm d}\rho=\frac{{\sf AVR}(X)}{{\sf AVR}(X)+\epsilon}\ds \int_0^1 |w_k'|^n{\rm d}{\sf m}_n\\&= \frac{{\sf AVR}(X)}{{\sf AVR}(X)+\epsilon}<1.
\end{align*}
In particular, there exists $\tilde R>0$ such that $\|\nabla u_{R,\epsilon,k}\|_{L^n(B_{x_0}(R),{\sf m})}\leq 1$ for every $R>\tilde R$ and $k\in \mathbb N$. In addition, the definition of $u_{R,\epsilon,k}$ and the above estimate show  that  $u_{R,\epsilon,k}\in W_0^{1,n}(B_{x_0}(R),{\sf m})$. Thus, if we denote by $S\in (0,\infty)$ the supremum in the assumption \eqref{MT-metric-space}, one has for every $R>\tilde R$ and $k\in \mathbb N$  that 
\begin{equation}\label{I-definicio-}
	I_{R,\epsilon,k}:=\frac{1}{{\sf m}(B_{x_0}(R))}\int_{B_{x_0}(R)} e^{{\sf C}u_{R,\epsilon,k}^\frac{n}{n-1}}{\rm d}{\sf m}\leq  S.
\end{equation}
Let $\alpha:=\frac{{\sf C}}{({\sf AVR}(X)+\epsilon)^\frac{1}{n-1}}$; with this notation,  by using the fact that $w_k(1)=0$ and Proposition \ref{proposition-co-area}, one has that
\begin{align*}
	I_{R,\epsilon,k}&=\frac{1}{{\sf m}(B_{x_0}(R))}\int_{B_{x_0}(R)} e^{\alpha w_{k}^\frac{n}{n-1}\left(\frac{{\sf d}(x_0,x)}{R}\right)}{\rm d}{\sf m}\\&=\frac{1}{{\sf m}(B_{x_0}(R))}\left({\sf m}(B_{x_0}(R))-\int_0^1 \left(e^{\alpha w_{k}^\frac{n}{n-1}\left(\rho\right)}\right)'{\sf m}(B_{x_0}(R\rho)){\rm d}\rho\right).
\end{align*}
Taking $R\to \infty$, by the Lebesgue dominated convergence theorem   and an integration by parts one has for every $k\in \mathbb N$ that 
$$\lim_{R\to \infty}I_{R,\epsilon,k}=1-\int_0^1 \left(e^{\alpha w_{k}^\frac{n}{n-1}\left(\rho\right)}\right)'\rho^n{\rm d}\rho=n\int_0^1 e^{\alpha w_{k}^\frac{n}{n-1}\left(\rho\right)}\rho^{n-1}{\rm d}\rho=\frac{1}{\omega_n}\int_0^1 e^{\alpha  w_k^\frac{n}{n-1}}{\rm d}{\sf m}_n.$$
Combining the latter relation with \eqref{I-definicio-}, it follows for every $k\in \mathbb N$ that
$\ds\int_0^1 e^{\alpha  w_k^\frac{n}{n-1}}{\rm d}{\sf m}_n\leq \omega_n S.$
Letting $k\to \infty$ in this inequality and taking into account that $\alpha=\frac{{\sf C}}{({\sf AVR}(X)+\epsilon)^\frac{1}{n-1}}>\alpha_n$, see \eqref{eps-valasztas}, we obtain a contradiction to \eqref{moser-vegtelen}. The proof is complete. 
\end{proof}

%
%
%
%
%


At the end of this section, we focus on the Moser--Trudinger inequality on ${\sf CD}(0,n)$ spaces, $n\in \mathbb N;$ to do this, let 
$${\sf MT}_n=C(\alpha_n,n),$$
where the constant $C(\alpha,n)$ is from \eqref{supremum-constant-MT} for  $\alpha\leq \alpha_n$, and $\alpha_n$ is the  critical exponent in the Euclidean Moser--Trudinger inequality. In addition, let $u_0:[0,1]\to \mathbb R_+$ be  the profile function of the extremizer in the Carleson--Chang problem, i.e., the function $u(x)=u_0(|x|),$ $x\in B_0(1)$, achieves the supremum in \eqref{supremum-constant-MT} for $\alpha=\alpha_n$ on  the unit ball $B_0(1)\subset \mathbb R^n$. 

	\begin{theorem}\label{MT-theorem-CD}   Let $n\geq 2$ be an integer  and $(X,{\sf d},{\sf m})$ be an essentially non-branching  ${\sf CD}(0,n)$ metric measure space with ${\sf AVR}(X) >0.$ Then for every open bounded set $\Omega\subset X$  one has  
	\begin{equation}\label{MT-metric-space-CD}
		\sup\left\{\frac{1}{{\sf m}(\Omega)}\int_\Omega e^{ {\sf AVR}(X)^\frac{1}{n-1} \alpha_n |u|^\frac{n}{n-1}}{\rm d}{\sf m}:u\in W_0^{1,n}(\Omega,{\sf m}),\|\nabla u\|_{L^n(\Omega,{\sf m})}\leq 1\right\}\leq {\sf MT}_n,
	\end{equation}
	where the constant ${\sf AVR}(X)^\frac{1}{n-1}\alpha_n $ is sharp. In addition, we have the following statements$:$ 
	
	\begin{itemize}
		\item[(i)] If there exist an  open bounded set $\Omega\subset X$ and a non-negative function  $u\in W_0^{1,n}(\Omega,{\sf m})\setminus\{0\}$ such that $\|\nabla u\|_{L^n(\Omega,{\sf m})}\leq 1$ and 
		\begin{equation}\label{MT-egyenloseg}
			\frac{1}{{\sf m}(\Omega)}\int_\Omega e^{ {\sf AVR}(X)^\frac{1}{n-1} \alpha_n u^\frac{n}{n-1}}{\rm d}{\sf m}={\sf MT}_n,
		\end{equation}
		then $X$ is an $n$-volume cone with a tip $x_0\in X$, $u$ is $x_0$-radial and the set  $\Omega$ is the ball $B_{x_0}({\sf AVR}(X)^{-1/n}R)$ up to an ${\sf m}$-negligible set, where $R=({\sf m}(\Omega)\omega_n^{-1})^{1/n};$ 
		
		\item[(ii)] If $X$ is an $n$-volume cone with a tip $x_0\in X,$ then for every $R>0$ the function $u(x)={\sf AVR}(X)^{-1/n}u_0({\sf AVR}(X)^{1/n}R^{-1}{\sf d}(x_0,x)),$ $x\in \Omega= B_{x_0}({\sf AVR}(X)^{-1/n}R)$, verifies relation \eqref{MT-egyenloseg}, where $u_0$ is the profile function of the extremizer in the Carleson--Chang problem. 
	\end{itemize}

\end{theorem}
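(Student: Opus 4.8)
The plan is to establish the three assertions in turn, reducing each to the Euclidean Moser--Trudinger problem on a ball via rearrangement. Throughout write $A:={\sf AVR}(X)$ and, for an admissible $u$, let $u^*$ denote its non-increasing rearrangement on $(\mathbb R_+,|\cdot|,{\sf m}_n)$. For the inequality \eqref{MT-metric-space-CD} I would first reduce to $u\ge 0$ (the Cheeger energy is unchanged under $u\mapsto|u|$) and set $R:=({\sf m}(\Omega)/\omega_n)^{1/n}$, so that $\Omega^*=[0,R)$. Applying the Cavalieri principle \eqref{Cavallieri} with $F(s)=e^{A^{1/(n-1)}\alpha_n s^{n/(n-1)}}$ turns the left-hand integral into $\int_{[0,R)}F(u^*)\,{\rm d}{\sf m}_n$, which, via the Euclidean radial function $U(x)=u^*(|x|)$ on $B_0(R)\subset\mathbb R^n$ and the identity $A^{1/(n-1)}U^{n/(n-1)}=(A^{1/n}U)^{n/(n-1)}$, equals $\int_{B_0(R)}e^{\alpha_n\tilde U^{n/(n-1)}}{\rm d}x$ with $\tilde U:=A^{1/n}U$. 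The decisive point is that $\tilde U$ is admissible for the Euclidean problem: Pólya--Szegő \eqref{Polya-Szego} gives $\|\nabla\tilde U\|_{L^n(B_0(R))}^n=A\int_0^\infty|(u^*)'|^n{\rm d}{\sf m}_n\le\int_\Omega|\nabla u|^n{\rm d}{\sf m}\le 1$, so the Euclidean Moser--Trudinger inequality \eqref{supremum-constant-MT} applies and, after dividing by ${\sf m}(\Omega)=\mathcal L^n(B_0(R))=\omega_nR^n$, yields the bound ${\sf MT}_n$. Sharpness of the exponent is then immediate from Theorem \ref{MT-theorem}: if \eqref{MT-metric-space-CD} held with some $\beta>A^{1/(n-1)}\alpha_n$ in place of $A^{1/(n-1)}\alpha_n$ and finite supremum, that theorem (whose proof tests only balls $B_{x_0}(R)$) would force $A\ge(\beta/\alpha_n)^{n-1}>A$, a contradiction.

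For the sufficiency statement (ii) I would compute directly on the cone. The chain rule together with \eqref{Cheeger-equality} and the eikonal equation \eqref{eikonal} give $|\nabla u|=R^{-1}|u_0'|(A^{1/n}R^{-1}{\sf d}(x_0,\cdot))$, and the cone identity ${\sf m}(B_{x_0}(r))=A\omega_nr^n$ combined with Proposition \ref{proposition-co-area}, exactly as in \eqref{gradiens-N-cone}, converts the $L^n$-gradient integral into $\int_0^\infty|u_0'|^n{\rm d}{\sf m}_n=\|\nabla U_0\|_{L^n(B_0(1))}^n=1$ after the substitution $s=A^{1/n}R^{-1}\rho$. The same substitution applied to the exponential functional, using $A^{1/(n-1)}\alpha_n u^{n/(n-1)}=\alpha_n u_0^{n/(n-1)}(A^{1/n}R^{-1}{\sf d}(x_0,\cdot))$ and ${\sf m}(\Omega)=\omega_nR^n$, reduces the left-hand side of \eqref{MT-egyenloseg} to the Carleson--Chang value $\frac{1}{\mathcal L^n(B_0(1))}\int_{B_0(1)}e^{\alpha_nU_0^{n/(n-1)}}{\rm d}x={\sf MT}_n$, which is precisely \eqref{MT-egyenloseg}.

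For the necessity statement (i) I would run the above chain backwards. Equality in \eqref{MT-egyenloseg} forces equality in the Euclidean Moser--Trudinger step for $\tilde U$; since ${\sf MT}_n$ is the supremum and the functional is strictly increasing under rescaling, one must have $\|\nabla\tilde U\|_{L^n(B_0(R))}=1$, whence both the constraint $\int_\Omega|\nabla u|^n{\rm d}{\sf m}=1$ and the Pólya--Szegő inequality \eqref{Polya-Szego} are equalities with both sides non-zero and finite. The equality characterization of Nobili and Violo then yields that $X$ is an $n$-volume cone with some tip $x_0$, see \eqref{volume-cone}; and because the radial profile $u^*$ coincides, after the $A^{1/n}$-rescaling, with the strictly decreasing Carleson--Chang extremizer, so that $(u^*)'\ne 0$ a.e.\ on $\{{\rm ess\,inf}\,u<u^*<{\rm ess\,sup}\,u\}$, it follows that $u$ is $x_0$-radial, see \eqref{u=u^*}. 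Finally, to identify $\Omega$ with $B_{x_0}(A^{-1/n}R)$ up to an ${\sf m}$-null set, I would argue that if the support radius $R'$ of $u^*$ were strictly smaller than $R$, then using $e^{\alpha_n\tilde U^{n/(n-1)}}=1$ on $B_0(R)\setminus B_0(R')$ and scale invariance on $B_0(R')$ produces the strict bound $1+(R'/R)^n({\sf MT}_n-1)<{\sf MT}_n$, contradicting equality; hence ${\sf m}(\{u>0\})={\sf m}(\Omega)$, and combined with $u$ being $x_0$-radial this gives $\{u>0\}=B_{x_0}(A^{-1/n}R)=\Omega$ up to ${\sf m}$-null sets.

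The hard part will be the equality case (i), specifically the passage through the Euclidean Carleson--Chang extremal problem: one must know that the radial extremizer of \eqref{supremum-constant-MT} on a ball is, up to scaling, the single profile $u_0$ and is \emph{strictly} monotone, so that the non-degeneracy hypothesis $(u^*)'\ne 0$ a.e.\ required by Nobili--Violo is verified and the radiality of $u$ can be deduced. Establishing or carefully citing this strict monotonicity, together with the exhaustion-of-$\Omega$ argument above, is considerably more delicate than the remaining, essentially computational, steps.
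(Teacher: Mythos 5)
Your treatment of the inequality \eqref{MT-metric-space-CD} (Cavalieri principle, P\'olya--Szeg\H o, then the Euclidean Moser--Trudinger inequality on $B_0(R)$ applied to $\tilde U={\sf AVR}(X)^{1/n}U$), the sharpness via Theorem \ref{MT-theorem}, and the sufficiency part (ii) by direct computation on the volume cone all match the paper's Steps 1, 2 and 4. Your single rescaling argument forcing $\|\nabla \tilde U\|_{L^n}=1$, hence equality throughout ${\sf AVR}(X)\,\|(u^*)'\|^n_{L^n(\mathbb R_+,{\sf m}_n)}=\|\nabla u\|^n_{L^n(\Omega,{\sf m})}=1$ and equality in P\'olya--Szeg\H o, is a legitimate compression of the paper's two separate normalization arguments \eqref{MT-gradiens=1} and \eqref{gradiens=2=MT-1-dim}, and your support-radius estimate $1+(R'/R)^n({\sf MT}_n-1)<{\sf MT}_n$ is a correct alternative route to identifying $\Omega$ with the ball.

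However, the radiality step in (i) has a genuine gap, and it sits exactly where you flagged the difficulty: you obtain $(u^*)'\neq 0$ a.e.\ by asserting that $u^*$, after the ${\sf AVR}(X)^{1/n}$-rescaling, \emph{coincides} with the strictly decreasing Carleson--Chang extremizer. This presupposes a uniqueness/classification result for extremizers of the Euclidean problem \eqref{supremum-constant-MT} on a ball, which is not available; the paper itself states in the remark following the theorem that uniqueness of $u_0$ is not known, which is precisely why it does not conclude $u=u_0({\sf d}(x_0,\cdot))$ in (i). Carleson--Chang gives \emph{existence} of an extremizer, not that every extremizer equals the profile $u_0$, so "carefully citing" strict monotonicity of $u^*$ is not possible. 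The paper circumvents this entirely: since $\tilde u={\sf AVR}(X)^{1/n}u^*$ attains the supremum (by \eqref{gradiens=2=MT-1-dim} and \eqref{MT-tilde-egyenlet}), it satisfies the Euler--Lagrange equation \eqref{Euler-Lag}; the quantity $w(\rho)=|\tilde u'(\rho)|^{n-2}\tilde u'(\rho)\rho^{n-1}$ is then non-positive and non-increasing with $w(0)=0$, and if $w(r_0)=0$ for some $r_0\in(0,R)$ one gets $w\equiv 0$ on $(0,r_0)$, whence $\tilde u\equiv 0$ there by the equation and then $u^*\equiv 0$ on all of $(0,R)$ by monotonicity, contradicting $u\neq 0$. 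Hence $\tilde u'<0$ on $(0,R)$ --- strict monotonicity of the \emph{given} extremizer, with no uniqueness input. To repair your proof, replace the appeal to the Carleson--Chang profile by this Euler--Lagrange/ODE argument; it also yields $\{u^*>0\}=(0,R)$ directly, making your separate support argument unnecessary (though that argument is itself sound).
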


\begin{proof} First of all, we notice that by using fine properties of Green functions, a slightly different relation to \eqref{MT-metric-space-CD} has been established recently by 
	Fontana, Morpurgo and Qin \cite{FMQ} on Riemannian manifolds with non-negative Ricci curvature. Instead of adapting their argument, we provide a self-contained proof of \eqref{MT-metric-space-CD} which will be also crucial for discussing the equality case; note that no equality case has been considered in \cite{FMQ}.
		We divide the proof into four steps. 
	
	\textit{\underline{Step 1: proof of \eqref{MT-metric-space-CD}}.}  
		If we use \eqref{supremum-constant-MT} for (Euclidean) radial functions $u_r(x)=\tilde u(|x|)$, $x\in B_0(R)$, $R>0,$  we obtain that for every $\tilde u\in W_0^{1,n}([0,R),{\sf m}_n)$ with  $\|\tilde u'\|_{L^n([0,R),{\sf m}_n)}\leq 1$ one has
		\begin{equation}\label{extremal-MT}
		\frac{1}{\omega_n R^n}\int_0^R e^{\alpha_n |\tilde u|^\frac{n}{n-1}}{\rm d}{\sf m}_n\leq {\sf MT}_n.
	\end{equation}

	Let us fix an open bounded set $\Omega\subset X$ and a function $u\in W_0^{1,n}(\Omega,{\sf m})$ with $\|\nabla u\|_{L^n(\Omega,{\sf m})}\leq 1$; without the loss of generality, we may assume that $u$ is non-negative on $\Omega.$ Due to the P\'olya--Szeg\H o inequality \eqref{Polya-Szego}, it follows that 
	\begin{equation}\label{equation-AVR-u-derivalt}
		{\sf AVR}(X)^\frac{1}{n}\|(u^*)'\|_{L^n(\mathbb R_+,{\sf m}_n)}\leq 1.
	\end{equation}
	  In particular, we may apply \eqref{extremal-MT} for $\tilde u={\sf AVR}(X)^\frac{1}{n}u^*$ on $[0,R)$ with $\omega_n R^n={\sf m}_n([0,R))={\sf m}(\Omega)$, obtaining through the Cavalieri principle \eqref{Cavallieri} that
	\begin{equation}\label{MT-estimate-fontos}
			\frac{1}{{\sf m}(\Omega)}\int_\Omega e^{ {\sf AVR}(X)^\frac{1}{n-1} \alpha_n u^\frac{n}{n-1}}{\rm d}{\sf m}=\frac{1}{{\sf m}(\Omega)}\int_\Omega e^{  \alpha_n ({\sf AVR}(X)^\frac{1}{n}u)^\frac{n}{n-1}}{\rm d}{\sf m}=\frac{1}{\omega_n R^n}\int_0^R e^{\alpha_n \tilde u^\frac{n}{n-1}}{\rm d}{\sf m}_n\leq {\sf MT}_n,
	\end{equation}
	which proves \eqref{MT-metric-space-CD}.

\textit{\underline{Step 2: sharpness of \eqref{MT-metric-space-CD}}.}
By contradiction, we assume that there exists ${\sf C}>0$ such that ${\sf C}>{\sf AVR}(X)^\frac{1}{n-1}\alpha_n$ and for every open bounded set $\Omega\subset X$  one has  
	$$	\sup\left\{\frac{1}{{\sf m}(\Omega)}\int_\Omega e^{ {\sf C} |u|^\frac{n}{n-1}}{\rm d}{\sf m}:u\in W_0^{1,n}(\Omega,{\sf m}),\|\nabla u\|_{L^n(\Omega,{\sf m})}\leq 1\right\}<\infty.$$
	Thus, by Theorem \ref{MT-theorem}, we have that ${\sf AVR}(X)\geq \left(\frac{{\sf C}}{\alpha_n}\right)^{n-1},$  contradicting  ${\sf C}>{\sf AVR}(X)^\frac{1}{n-1}\alpha_n$. 
	
\textit{\underline{Step 3: proof of ${\rm (i)}$.}}	Let us fix an  open bounded set $\Omega\subset X$ and a non-negative function  $u\in W_0^{1,n}(\Omega,{\sf m})\setminus\{0\}$ with  $\|\nabla u\|_{L^n(\Omega,{\sf m})}\leq 1$ and such that \eqref{MT-egyenloseg} holds. We first prove that  
\begin{equation}\label{MT-gradiens=1}
	\|\nabla u\|_{L^n(\Omega,{\sf m})}= 1.
\end{equation}
Indeed, by contradiction, let us assume that $a:=\|\nabla u\|_{L^n(\Omega,{\sf m})}\in [0,1)$. The case $a=0$ trivially leads us to a contradiction, thus we assume that $a\in (0,1)$. Then for the function $w_a:=u/a\in W_0^{1,n}(\Omega,{\sf m})$ we have $\|\nabla w_a\|_{L^n(\Omega,{\sf m})}= 1$, thus by \eqref{MT-egyenloseg} and \eqref{MT-estimate-fontos} one has 
	$${\sf MT}_n=\frac{1}{{\sf m}(\Omega)}\int_\Omega e^{ {\sf AVR}(X)^\frac{1}{n-1} \alpha_n u^\frac{n}{n-1}}{\rm d}{\sf m}<\frac{1}{{\sf m}(\Omega)}\int_\Omega e^{ {\sf AVR}(X)^\frac{1}{n-1} \alpha_n w_a^\frac{n}{n-1}}{\rm d}{\sf m}\leq {\sf MT}_n,$$
	a contradiction.  Thus \eqref{MT-gradiens=1} holds. 
	
	For the function $u$ in the assumption \eqref{MT-egyenloseg}, we  should have equality in  \eqref{MT-estimate-fontos}, i.e., 
	\begin{equation}\label{MT-tilde-egyenlet}
			\frac{1}{\omega_n R^n}\int_0^R e^{\alpha_n \tilde u^\frac{n}{n-1}}{\rm d}{\sf m}_n= {\sf MT}_n,
	\end{equation}
	where $\omega_n R^n={\sf m}_n([0,R))={\sf m}(\Omega)$ and $\tilde u={\sf AVR}(X)^\frac{1}{n}u^*$ on $[0,R)$. We prove that
	\begin{equation}\label{gradiens=2=MT-1-dim}
		 \|\tilde u'\|_{L^n([0,R),{\sf m}_n)}= 1.
	\end{equation}
	Indeed, by \eqref{equation-AVR-u-derivalt} one has that $\|\tilde u'\|_{L^n([0,R),{\sf m}_n)}\leq  1$. Let us assume by contradiction that $a:=\|\tilde u'\|_{L^n([0,R),{\sf m}_n)}\in [0,1)$. Again, the case $a=0$ is trivial. If $a>0$, then for $\tilde w_a:=\tilde u/a$ we have  $\|\tilde w_a'\|_{L^n([0,R),{\sf m}_n)}= 1$ and according to \eqref{MT-tilde-egyenlet} and \eqref{extremal-MT} one has 
	$${\sf MT}_n=\frac{1}{\omega_n R^n}\int_0^R e^{\alpha_n \tilde u^\frac{n}{n-1}}{\rm d}{\sf m}_n< \frac{1}{\omega_n R^n}\int_0^R e^{\alpha_n \tilde w_a^\frac{n}{n-1}}{\rm d}{\sf m}_n\leq {\sf MT}_n,$$
	 which is a contradiction, showing the validity of \eqref{gradiens=2=MT-1-dim}.  
	 
	 Combining relations \eqref{MT-gradiens=1} and \eqref{gradiens=2=MT-1-dim}, it follows that 
	 $$\|\nabla u\|_{L^n(\Omega,{\sf m})}={\sf AVR}(X)^\frac{1}{n}\|(u^*)'\|_{L^n([0,R),{\sf m}_n)},$$
	 which means that we have equality in the P\'olya--Szeg\H o inequality \eqref{Polya-Szego} for the function $u$. Accordingly, it follows that the space $(X,{\sf d}, {\sf m})$ is an $n$-volume cone with a tip $x_0\in X$. 
	 
	 To prove that $u$ is $x_0$-radial, we show that $(u^*)'\neq 0$ on $(0,R)$. To see this, we recall the definition of ${\sf MT}_n$, which is the supremum in \eqref{supremum-constant-MT} for $\alpha=\alpha_n.$ Due to \eqref{gradiens=2=MT-1-dim} and \eqref{MT-tilde-egyenlet}, ${\sf MT}_n$ is achieved by the function $\tilde u$; in particular, the corresponding Euler--Lagrange equation reads as 
	\begin{equation}\label{Euler-Lag}
			-(|\tilde u'(\rho)|^{n-2}\tilde u'(\rho)\rho^{n-1})'=c\tilde u^\frac{1}{n-1}e^{\alpha_n \tilde u(\rho)^\frac{n}{n-1}}\rho^{n-1},\ \ \rho\in (0,R),
	\end{equation}
	 for some $c>0.$ We stress that $u^*$ is non-negative and non-increasing, so  $\tilde u$. Thus $\tilde u'\leq 0$ a.e.\ in $(0,R)$, and if $$w(\rho)=|\tilde u'(\rho)|^{n-2}\tilde u'(\rho)\rho^{n-1},$$ one has that $w\leq 0.$ By \eqref{Euler-Lag} we also have that $w'\leq 0$, i.e., $w$ is non-positive and non-increasing in the interval $(0,R)$. Let us assume by contradiction that there exists $r_0\in (0,R)$ such that $w(r_0)=0$. Since $w(0)=0$, it follows by monotonicity reason that $w\equiv 0$ in $(0,r_0)$, i.e., $\tilde u'\equiv 0$ in $(0,r_0)$. In particular, by \eqref{Euler-Lag} it follows that $\tilde u\equiv 0$, so  $u^*\equiv 0$ in $(0,r_0)$. Therefore, by the definition of $u^*$, it follows that $u^*\equiv 0$ on the whole interval $(0,R)$, which implies that $u\equiv 0$  on $\Omega$, contradicting the fact that $u$ is non-zero. Therefore, we necessarily have that $w<0$ in $(0,R)$, which implies that $\tilde u'<0$, thus $(u^*)'<0$ in $(0,R)$. Now, \eqref{u=u^*} implies that $u(x)=u^*\left({\sf AVR}(X)^\frac{1}{n}{\sf d}(x_0,x)\right)$ for ${\sf m}$-a.e.\ $x\in B_{x_0}({\sf AVR}(X)^{-\frac{1}{n}}R)$, thus $u$ is $x_0$-radial.  
	 
	 It remains to prove that $\Omega$ is  a ball. To do this, we know that $\{u>0\}\subset \Omega$; in fact in this inclusion we have equality ${\sf m}$-a.e. Indeed, since $(u^*)'<0$ in $(0,R)$, due to equation \eqref{Euler-Lag}, one has that $\{u^*>0\}=(0,R)$. Thus, by definition of $u^*$ and the latter relation, it follows that 
	 $${\sf m}(\{u>0\})={\sf m}_n(\{u^*>0\})={\sf m}_n(0,R)=\omega_n R^n={\sf m}(\Omega),$$ which implies that $\{u>0\}$ coincides  $ \Omega$ up to an ${\sf m}$-negligible set. Since $u$ is $x_0$-radial, it follows that $\Omega=B_{x_0}({\sf AVR}(X)^{-1/n}R)$ up to an ${\sf m}$-negligible set.

\textit{\underline{Step 4: proof of ${\rm (ii)}$.}}	 Since $u_0:[0,1]\to \mathbb R_+$ is the profile function of the extremizer in the Carleson--Chang problem, we have that 
$\|u_0'\|_{L^n([0,1),{\sf m}_n)}= 1$ and
\begin{equation}\label{extremal-Carleson-Chang}
	\frac{1}{\omega_n}\int_0^1 e^{\alpha_n u_0^\frac{n}{n-1}}{\rm d}{\sf m}_n= {\sf MT}_n.
\end{equation}
Let $R>0$ be  arbitrarily fixed.  Since $X$ is an $n$-volume cone with a tip $x_0\in X$, i.e. ${\sf m}(B_{x_0}(r))={\sf AVR}(X){\omega_nr^n}$ for all $r>0,$ see \eqref{volume-cone}, and 
$$u(x)={\sf AVR}(X)^{-1/n}u_0({\sf AVR}(X)^{1/n}R^{-1}{\sf d}(x_0,x)),\ \ x\in \Omega=B_{x_0}({\sf AVR}(X)^{-1/n}R),$$
a similar calculation as in \eqref{gradiens-N-cone} shows that
\begin{align}\label{gradiens-N-cone-Carleson}
	\nonumber	\|\nabla u\|^n_{L^n(X,{\sf m})}&=\int_X |\nabla u|^n{\rm d}{\sf m}=R^{-n}\int_{B_{x_0}({\sf AVR}(X)^{-\frac{1}{n}}R)} |u_0'|^n\left({\sf AVR}(X)^{1/n}R^{-1}{\sf d}(x_0,x)\right){\rm d}{\sf m}(x)\\&\nonumber=R^{-n}\omega_n\left(|u_0'|^n(1)R^n	-{\sf AVR}(X)^{\frac{1}{n}+1}R^{-1}\ds\int_0^{{\sf AVR}(X)^{-\frac{1}{n}}R} (|u_0'|^n)'({\sf AVR}(X)^{\frac{1}{n}}R^{-1}t)t^n{\rm d}t\right)
	\\&=n\omega_n\ds \int_0^{1} |u_0'|^n(\rho)\rho^{n-1}{\rm d}\rho=\|u_0'\|^n_{L^n([0,1),{\sf m}_{n})}=1.
\end{align}
Moreover, since $u_0(1)=0$, by  Proposition \ref{proposition-co-area} and relation \eqref{extremal-Carleson-Chang}, one has that 
\begin{align*}
	\frac{1}{{\sf m}(\Omega)}\int_\Omega e^{ {\sf AVR}(X)^\frac{1}{n-1} \alpha_n u^\frac{n}{n-1}}{\rm d}{\sf m}&=\frac{1}{\omega_n R^n}\int_{B_{x_0}({\sf AVR}(X)^{-1/n}R)} e^{\alpha_n u_0^\frac{n}{n-1}({\sf AVR}(X)^{1/n}R^{-1}{\sf d}(x_0,\cdot))}{\rm d}{\sf m}\\&=1-\int_0^1\left(e^{\alpha_n u_0^\frac{n}{n-1}(\rho)}\right)'\rho^n{\rm d}\rho=n\int_0^1e^{\alpha_n u_0^\frac{n}{n-1}(\rho)}\rho^{n-1}{\rm d}\rho ={\sf MT}_n,
\end{align*}
which concludes the proof. 
\end{proof}


\begin{remark}\rm 
	(a) We emphasize that for a \textit{fixed} domain $\Omega\subset X$, the constant ${\sf AVR}(X)^\frac{1}{n-1}\alpha_n $ in \eqref{MT-metric-space-CD}   cannot be optimal (even in the Euclidean setting). Indeed, this fact can be easily proved by using the Moser function \eqref{Moser-function} concentrated at a point; I would like to thank Carlo Morpurgo for pointing out this phenomenon. 
	In fact, the sharpness of the above constant is obtained by exploring the validity of the  inequality for any \textit{large} domain; in particular, for metric balls $B_{x_0}(R)$ with $R\to \infty,$ see the blow-down limiting argument in the  proof of Theorem \ref{MT-theorem}. 
	
(b) In Theorem \ref{MT-theorem-CD}/(i) we cannot conclude that $u=u_0({\sf d}(x_0,\cdot)),$ where  $u_0$ is  the profile function of the extremizer in \eqref{supremum-constant-MT} for $\Omega=B_0(1)$ and $\alpha=\alpha_n$, since the uniqueness of $u_0$ is not known. 
\end{remark}

\section{Final comments} \label{section-final}

We conclude the paper with some final comments.  
\subsection{Volume growth characterizes Sobolev inequalities on ${\sf CD}(0,N)$ spaces}
Let $(X,{\sf d},{\sf m})$ be an essentially non-branching  ${\sf CD}(0,N)$ metric measure space, $N>1$. Our results can be roughly summarized as stating that the  following statements are equivalent: 
\begin{itemize}
	\item Any  Sobolev-type inequality in \S \ref{section-5} \& \ref{section-borderline-0} holds on $(X,{\sf d},{\sf m})$ for some constant $C>0;$
	\item ${\sf AVR}(X) >0.$
\end{itemize}
 Indeed, just to exemplify the above equivalence, if the Nash  inequality \eqref{Nash-metric-space}
holds for some $C>0$, then Theorem \ref{Nash-theorem} implies that $	{\sf AVR}(X)\geq \left({{\sf CL}_N}/{C}\right)^{N+2} >0.$
 Conversely, if ${\sf AVR}(X) >0$ holds, then Theorem \ref{Nash-theorem-CD} shows that the Nash inequality holds with the sharp constant $C:={\sf AVR}(X)^{-\frac{1}{N+2}} {\sf CL}_N>0.$ Similar arguments work for the other inequalities as well. 

\begin{remark}\rm 
  A similar non-collapsing characterization as above can be found on Riemannian manifolds for Sobolev inequalities (i.e., the Gagliardo--Nirenberg inequality \eqref{GN-1-metric-space} for $\theta=1$), see Coulhon and Saloff-Coste \cite{C-SC}. In the non-smooth setting, Tewodrose \cite{Tew} proved global weighted Sobolev inequalities 
  on non-compact ${\sf CD} (0, N)$
 spaces satisfying a suitable volume growth condition, where ${\sf AVR}(X) >0$   appears as a particular form. 
\end{remark}

\subsection{Volume growths versus Sobolev inequalities involving singularities.} Instead of the Gagliardo--Nirenberg--Sobolev inequality from Theorem \ref{main-theorem-unified}, we can consider a general Caffarelli--Kohn--Nirenberg-type inequality involving singular terms; for the sake of simplicity, we only consider  the  setting of \S \ref{section-unified}.   Without repeating the arguments, we roughly state a similar result for inequalities involving singular terms. 

For the parameters $\alpha,\beta,\gamma\in \mathbb R$, $q,r>0$, $N,p>1$  and $\theta\in (0,1]$, we assume that 
\begin{equation}\label{balance-condition-11}
	\frac{1}{q}-	\frac{\alpha}{N}=\theta\left(\frac{1}{p}-\frac{1+\beta}{N}\right)+(1-\theta)\left(\frac{1}{r}-\frac{\gamma}{N}\right),
\end{equation}
and 
\begin{equation}\label{balance-condition-3}
	(1-\theta)\gamma+\theta(1+\beta)-\alpha\neq 0.
\end{equation}
In addition, we  assume  the validity of a sharp Caffarelli--Kohn--Nirenberg  inequality on $\mathbb R_+$: 
\begin{equation}\label{CKN-1-dimensional-model-2}
	\left\|\frac{u}{t^\alpha}\right\|_{L^q(\mathbb R_+,{\sf m}_{N})}\leq {\sf K}_{\rm opt} 	\left\|\frac{u'}{t^\beta}\right\|_{L^p(\mathbb R_+,{\sf m}_{N})}^\theta 	\left\|\frac{u}{t^\gamma}\right\|_{L^r(\mathbb R_+,{\sf m}_{N})}^{1-\theta},\ \ \forall u\in C_0^\infty(\mathbb R_+),
\end{equation}
providing also an extremal function with suitable integrability properties, similar to \eqref{extremal-asymptotics}. We notice that \eqref{balance-condition-11} is the balance condition for \eqref{CKN-1-dimensional-model-2}, while if $\alpha=\beta=\gamma=0$, relation \eqref{balance-condition-11} reduces to \eqref{balance-condition} and \eqref{balance-condition-3} is automatically satisfied. 

We now consider a metric measure space $(X,{\sf d},{\sf m})$   
supporting the \textit{Caffarelli--Kohn--Nirenberg inequality}  for some ${\sf C}>0$, having the form  
\begin{equation}\label{CKN-metric-space}
	\left\|\frac{u}{\rho_{x_0}^\alpha}\right\|_{L^q(X,{\sf m})}\leq {\sf C} \left\|\frac{|\nabla u|_p}{\rho_{x_0}^\beta}\right\|_{L^p(X,{\sf m})}^\theta \left\|\frac{u}{\rho_{x_0}^\gamma}\right\|_{L^r(X,{\sf m})}^{1-\theta},\ \ \forall u\in  {\rm Lip}_c(X), 
\end{equation}
where $\rho_{x_0}={\sf d}(x_0,\cdot)$ for some $x_0\in X.$ 

Under these assumptions, if the limit $L_N(x_0)$ exists in \eqref{Limit-exists-N}, then one can prove, similarly to Theorem \ref{main-theorem-unified}, that
\begin{equation}\label{vol-gro-CKN}
	{\sf AVR}(X)\geq \left(\frac{{\sf K}_{\rm opt}}{\sf C}\right)^\frac{N}{(1-\theta)\gamma+\theta(1+\beta)-\alpha}.
\end{equation}
The above result provides a  simple proof of well-known results in the literature,  known mostly in Riemannian and Finsler manifolds with non-negative Ricci curvature, where the corresponding Barenblatt functions together with an involved  comparison of ODE and ODI are used, see e.g.\ 
\begin{itemize}
	\item do Carmo and Xia \cite{doCarmo-Xia} for $\theta=1$, $p=2$ and $q=\frac{2N}{N-2+2(\alpha-\beta)}$ with  $0\leq \beta<\frac{N-2}{2}$, $\beta\leq \alpha+1<\beta+1$, $N\in \mathbb N$ (Riemannian manifolds);
	\item Krist\'aly and Ohta \cite{Kristaly-Ohta} for $\theta=1$, $p=2$, $\beta=0$ and $q=\frac{2N}{N-2+2\alpha}$ with   $ \alpha\in [0,1)$, $N\in \mathbb N$ (Bishop--Gromov metric measure spaces \& Finsler manifolds); 
	\item  Tokura,  Adriano and  Xia \cite{TAX}, where the parameters verify  \eqref{balance-condition-11} and \eqref{balance-condition-3}  (Bishop--Gromov metric measure spaces). 
\end{itemize} 

\begin{remark}\rm \label{remark-degenerate}
	In the limit case when \eqref{balance-condition-3} fails, i.e., $(1-\theta)\gamma+\theta(1+\beta)-\alpha= 0,$ the volume growth \eqref{vol-gro-CKN} degenerates to  ${\sf C}\geq {\sf K}_{\rm opt}$. 
\end{remark}

\subsection{Borderline cases of the Caffarelli--Kohn--Nirenberg inequality \eqref{CKN-metric-space}} 
  We now discuss two borderline cases of \eqref{CKN-metric-space} on ${\sf CD}(0,N)$ metric measure spaces. 
  \subsubsection{Heisenberg--Pauli--Weyl uncertainty principle}
  Let $\theta=\frac{1}{2}$, $\alpha=\beta=0$, $\gamma=-1$ and $p=q=r=2$ in \eqref{CKN-metric-space} -- which clearly verify the balance condition \eqref{balance-condition-11}, -- i.e., 
	\begin{equation}\label{HPW-1}
			\left\|u\right\|_{L^2(X,{\sf m})}\leq {\sf C} \left\|\nabla u\right\|_{L^2(X,{\sf m})}^{1/2} \left\|\rho_{x_0}{u}\right\|_{L^2(X,{\sf m})}^{1/2},\ \ \forall u\in  {\rm Lip}_c(X).
	\end{equation}
		It is well-known that the Heisenberg--Pauli--Weyl uncertainty principle \eqref{HPW-1} holds on the 1-dimensional model metric measure cone  $(\mathbb R_+,|\cdot|,{\sf m}_{N})$ with the sharp constant ${\sf C}={\rm K}_{\rm opt}=\sqrt{2/N}$ and the (unique) extremal is the Gaussian $u_0(t)=e^{-t^2}$, $t\geq 0$ (up to multiplicative constant and scaling). 
	Han and Xu \cite{Han-Xu} proved that if $(X,{\sf d},{\sf m})$ is an essentially non-branching  ${\sf CD}(0,N)$ metric measure space
	 for some $N>1$,  the Heisenberg--Pauli--Weyl uncertainty principle \eqref{HPW-1} holds and  ${\sf C}=\sqrt{2/N}$ is sharp if and only if $X$ is an $N$-volume cone. The Riemannian and Finsler versions of this result have been initially proved in Krist\'aly \cite{Kristaly-JMPA} and Huang,  Krist\'aly and Zhao \cite{HKW}, respectively. Apart from the above rigidity result, we do not know the sharp constant of \eqref{HPW-1} on ${\sf CD}(0,N)$ spaces. 
If  \eqref{HPW-1} holds, the problem is that no volume growth can be obtained -- similarly as in \eqref{vol-gro-CKN} --  since the parameters fail to satisfy  \eqref{balance-condition-3}. The only available information is that if \eqref{HPW-1} holds, then ${\sf C}\geq \sqrt{2/N}$, cf.\ Remark \ref{remark-degenerate}. 
	
\subsubsection{Hardy inequality} Let $\theta=1$, $\alpha=1$, $\beta=0$ and $1<p=q<N$ in \eqref{CKN-metric-space}, i.e., 
\begin{equation}\label{Hardy-1}
	\left\|\frac{u}{\rho_{x_0}}\right\|_{L^p(X,{\sf m})}\leq {\sf C} \left\|\nabla u\right\|_{L^p(X,{\sf m})},\ \ \forall u\in  {\rm Lip}_c(X).
\end{equation}
Although in the 1-dimensional model  cone  $(\mathbb R_+,|\cdot|,{\sf m}_{N})$ the optimal constant is ${\sf C}={\rm K}_{\rm opt}=\frac{p}{N-p}$, \textit{no} extremal function exists;  therefore, the above machinery cannot be applied to establish sharp Hardy inequalities on ${\sf CD}(0,N)$ spaces (note also that the parameters do not satisfy \eqref{balance-condition-3}). By using the P\'olya--Szeg\H o inequality \eqref{Polya-Szego} and a suitable Hardy--Littlewood inequality on ${\sf CD}(0,N)$ spaces, 
the expected sharp constant in \eqref{Hardy-1} seems to be   ${\sf AVR}(X)^{-\frac{1}{N}}\frac{p}{N-p}$ whenever ${\sf AVR}(X)>0$.



\vspace{0.5cm}

\noindent \textbf{Acknowledgments.} I would like to thank Michel Ledoux for raising the question on the sharp Nash and Gagliardo--Nirenberg--Sobolev inequalities on generic Riemannian manifolds and metric measure  spaces, as well as Carlo Morpurgo for discussions about Moser--Trudinger inequalities. 

\end{document}